\documentclass[preprint,11pt]{elsarticle}
\usepackage[margin=2cm]{geometry}
\usepackage{mathrsfs}
\usepackage{amsmath,amsfonts,amssymb,amsthm,amscd}
\usepackage[english]{babel}
\usepackage[utf8]{inputenc}
\usepackage[T1]{fontenc}
\usepackage{color}
\usepackage{graphicx}
\usepackage{comment}
\usepackage[dvipsnames]{xcolor}
\usepackage[colorlinks,linkcolor=blue,citecolor=blue]{hyperref}

\usepackage{soul}

\def\be{\begin{eqnarray}}
\def\ee{\end{eqnarray}}
\def\b*{\begin{eqnarray*}}
\def\e*{\end{eqnarray*}}
\def \1{{\bf 1}}

\def \Frac{\displaystyle\frac}

\def \E{\mathbb{E}}
\def \F{\mathbb{F}}

\def \N{\mathbb{N}}
\def \P{\mathbb{P}}
\def \R{\mathbb{R}}
\def \Q{\mathbb{Q}}

\def \Ec{\mathcal{E}}

\def\Hc{{\cal H}}
\def\Fc{{\cal F}}
\def\Tc{{\cal T}}

\def\Gc{{\cal G}}
\def\Sc{{\cal S}}

\def\eps{\varepsilon}

\newtheorem{Theorem}{Theorem}[section]
\newtheorem{Definition}{Definition}[section]
\newtheorem{Proposition}{Proposition}[section]

\newtheorem{Lemma}{Lemma}[section]
\newtheorem{Corollary}{Corollary}[section]
\newtheorem{Remark}{Remark}[section]
\newtheorem{Example}{Example}[section]

\begin{document}

\begin{frontmatter}
\title{Law-invariant BSDEs and dynamic risk measures: new characterizations}

\author[lemans]{Zakaria Bensaid}
\ead{zakaria.bensaid@univ-lemans.fr}

\author[ensae]{Roxana Dumitrescu\corref{cor1}}
\ead{roxana.dumitrescu@ensae.fr}

\author[lemans]{Anis Matoussi}
\ead{anis.matoussi@univ-lemans.fr}

\author[lemans]{Wissal Sabbagh}
\ead{wissal.sabbagh@univ-lemans.fr}

\cortext[cor1]{Corresponding author}

\address[lemans]{Le Mans Universit\'e, LMM, IRA, F-72000 Le Mans, France}

\address[ensae]{ENSAE Paris, CREST, Institut Polytechnique de Paris, Palaiseau, France}

\begin{abstract}
We provide a new characterization of law-invariant backward stochastic differential equations (i.e. BSDEs) with quadratic growth. This answers the open question raised in Xu–Xu–Zhou (2022) on necessary conditions for law-invariance of $g$-expectations, and extends the analysis to general (possibly non-deterministic) generators. We also introduce and compare several dynamic notions of law-invariance in continuous time, establishing precise relationships among them.

As an application, we study dynamic risk measures. For cash-additive, normalized risk measures, we recover and extend to continuous time the Kupper--Schachermayer (2009) characterization obtained in discrete time, showing that law-invariance and strong time-consistency force an entropic structure. We further obtain a new characterization of cash non-additive law-invariant risk measures generated by BSDEs via a time-dependent certainty equivalent representation. 
\end{abstract}

\begin{keyword}
backward stochastic differential equations \sep law-invariance \sep $g$-expectations \sep dynamic risk measures \sep cash non-additive risk measures \sep certainty equivalent representation \sep entropic risk measures
\MSC[2020] 60H10 \sep 91G70 \sep 91B30 \sep 60G44
\end{keyword}
\end{frontmatter}
\section{Introduction}

In this paper, we investigate law-invariance for operators induced by backward stochastic differential equations (BSDEs) --- in particular Peng's conditional $g$-expectation \cite{CoquetHuMeminPeng2002} --- and dynamic convex risk measures (or dynamic concave utilities) \cite{CvitanicKaratzas1999DynamicRisk, DetlefsenScandolo2005, beatrice, DelbaenPengRosazza2010}.
Our main contributions are threefold. First, we introduce novel dynamic notions of law invariance in continuous time and establish precise relationships among them.
Second, we derive structural \emph{necessary} conditions for the law invariance of $g$-expectations under a Brownian filtration, thereby answering the open question raised in Xu--Xu--Zhou \cite{XuXuZhou2022}, and we identify a significantly broader class of law-invariant generators.
Third, we apply these results to dynamic risk measures: we recover and extend the Kupper--Schachermayer characterization \cite{KupperSchachermayer2009} in continuous time and exhibit a new explicit class of law-invariant, cash non-additive risk measures.

In a Brownian filtration, dynamic risk measures and $g$-expectations are closely related: this connection was first identified by Rosazza Gianin \cite{RosazzaGianin2006}, and later developed by Barrieu and El Karoui \cite{BarrieuElKaroui2009IndifferencePricingRiskMeasures} and Di Nunno and Rosazza Gianin \cite{NunnoGianin}, among others. A key property in the dynamic setting is time-consistency, which admits several formulations; see \cite{NunnoGianin} and the references therein. Using the strong formulation (recursivity), Bion-Nadal \cite{BionNadal2009} showed that a normalized, time-consistent dynamic convex risk measure satisfies a supermartingale property which, combined with the representation results of Delbaen et al.\ \cite{DelbaenPengRosazza2010} and the regularity results of Delbaen et al.\ \cite{DelbaenHuBao2011}, yields a one-to-one correspondence between normalized, cash-additive dynamic risk measures and quadratic $g$-expectations. The main tools are representation theorems for filtration-consistent nonlinear expectations (see \cite{CoquetHuMeminPeng2002, HU20081518, ZHENG2024104464} and the references therein). A similar correspondence for non-normalized or cash non-additive time-consistent risk measures is, to the best of our knowledge, still an open problem. Nevertheless, we aim to provide an explicit class of law-invariant, cash non-additive dynamic risk measures by characterizing law-invariant $g$-expectations under a Brownian filtration.

Informally, law-invariance means that the evaluation of a payoff depends only on its distribution under the reference probability measure~$\P$, and not on its particular realization. This property is central in finance and insurance whenever preferences or capital requirements are formulated in distributional terms \cite{WANG1997173, Dybvig98, XuXuZhou2022}. Classical examples include quantile-based criteria such as Value-at-Risk, tail-based criteria such as Expected Shortfall, and expected utility-based functionals such as shortfall risk measures \cite{RockafellarUryasevCVaR, FrittelliRosazzaGianin2002, FollmerSchied2002, BionNadal2008}. In the static setting, law-invariant risk measures admit deep structural representations (e.g.\ Kusuoka-type representations on atomless spaces); the literature is vast and includes \cite{Kusuoka2001, JouiniSchachermayerTouzi2006, FollmerSchiedBook, EkelandSchachermayer2011}.

Passing from static to dynamic evaluation is subtle. In discrete time, combining law-invariance with strong time consistency drastically reduces the admissible class: under cash additivity, normalization, and relevance, Kupper and Schachermayer \cite{KupperSchachermayer2009} showed that one is essentially led to an entropic-type structure. This raises two natural questions: to what extent is this restriction driven by law-invariance and time consistency alone, and to what extent does it rely on cash additivity and normalization? Furthermore, do the different dynamic notions of law-invariance that arise in continuous time lead to distinct structural conditions? The latter question does not appear in the discrete-time framework of \cite{KupperSchachermayer2009}, which does not distinguish between these notions.

In continuous time, BSDEs and $g$-expectations are time-consistent by construction (via the flow property), so it is natural to ask to what extent they can be reconciled with law-invariance. This question was first explored in the setting of discrete-time BSDEs on a binomial tree by Elliott, Siu, and Cohen \cite{cohen}. More recently, Xu, Xu, and Zhou \cite{XuXuZhou2022} formalized the notion of law-invariant $g$-expectations in continuous time and provided PDE-based \emph{sufficient} conditions for deterministic generators. However, such sufficiency results do not clarify whether law-invariance imposes genuine structural conditions on the generator, nor how the picture changes when the generator is allowed to be random; identifying necessary conditions has remained open and constitutes a central motivation for the present work.

On the technical side, the properties of $g$-expectations were established in the Lipschitz case by Peng \cite{Peng}, extended to the uniformly continuous case by Jia \cite{JIA20102241}, to the quadratic growth case by Ma and Yao \cite{Ma02062010}, and further generalized by Zheng \cite{ZHENG2024104464}. In addition, it is not clear whether the static definition of law-invariance (as used in \cite{KupperSchachermayer2009, XuXuZhou2022}) is sufficiently general in continuous time; we therefore extend the notions developed in \cite{weber2006distribution, beatrice, cohen, de2023convex}. Our study also contributes to the literature on representation properties of generators that can be inferred from BSDE solutions \cite{Briand2000}.
\paragraph{Main contributions}
We first introduce new dynamic definitions of law-invariance in continuous time (see Definitions~\ref{def:cli}, \ref{def:oscli}, and \ref{def:mcli}) and analyze the relationships among these four notions (see Remark~\ref{rmk:defs} and Theorem~\ref{thm:const}).

We then derive necessary conditions by distinguishing three regimes according to the randomness of the generator at $z=0$, namely the dependence of $g(t,\omega,y,0)$ on $(t,\omega)$, and we address each case using different techniques.

\begin{itemize}
\item In the \textit{first regime}, assuming $g(t,\omega,y,0)=0$, we obtain a complete characterization: law-invariance holds if and only if $g$ is deterministic and purely quadratic in $z$ (Theorem~\ref{thm:main} and Corollary~\ref{cor:mcli}), i.e.,
$$
g(t,\omega,y,z) = k(y)\lvert z\rvert^2.
$$
Under this condition, the four notions of law-invariance are equivalent.

\item In the \textit{second regime}, where $g(t,\cdot,y,0)=h(t,y)$ is nonzero and deterministic, we reduce the problem to the first regime via a deterministic change of variables and recover, once again, a structural restriction: law-invariance holds if and only if $g$ is deterministic $dt\otimes d\P$-a.e.\ and of the form (Theorem~\ref{thm:mainB})
$$
g(t,\omega,y,z) = h(t,y) + f(t,y)\lvert z\rvert^2,
$$
with $f$ solving the PDE
$$
\partial_t f(t,y) - h(t,y)\partial_y f(t,y) - \partial_y h(t,y) f(t,y) = \tfrac{1}{2}\partial_{yy} h(t,y),
$$
which coincides with the condition proposed in \cite{XuXuZhou2022} as sufficient. Our result closes this gap by showing that this PDE constraint is also necessary.

\item Finally, in the \textit{general regime} where $g(\cdot,y,0)$ is non-deterministic, we derive a sharp pathwise necessary condition by differentiating the BSDE, thereby restricting the class of law-invariant generators. Under an additional structural assumption, this condition yields an explicit quadratic form (Theorems~\ref{cor:litransinv} and~\ref{cor:CLLItransinv})
$$
g(t,\omega,z) = g(t,\omega,0) + \beta \lvert z\rvert^2.
$$
\end{itemize}
The proof of the third regime relies on a Gâteaux differentiability result for the $g$-expectation with respect to the terminal condition (Theorem~\ref{thm:diff} in the Appendix). Differentiability results for quadratic BSDEs have been established by Ankirchner, Imkeller, and Dos Reis \cite{ankirchner2007classical}, and more recently by Imkeller, Likibi Pellat, and Menoukeu-Pamen \cite{imkeller2024differentiability}. These works, however, impose conditions on $\partial_y g$ that are either bounded or of sublinear growth in $|z|$ (typically $|z|^\alpha$ with $\alpha < 1$), which exclude the full range $\alpha \in [1,2)$ allowed under our Assumption~\textbf{(A4$^*$)}. In our setting, the differentiability result is applied before the structural properties of the generator are known, and thus it is not possible to rely on assumptions that already encode a specific functional form of $g$.

Moreover, the arguments in \cite{ankirchner2007classical} rely on a dominated convergence step (see also \cite[Chapter~3]{dos2010some}), which requires uniform integrability of products involving $\partial_z g$ and the linearized solution. In the quadratic growth setting, $\partial_z g$ may grow linearly in $z$, and the available estimates do not immediately provide the required domination in our framework. Rather than extending these arguments under additional structural restrictions, we provide a self-contained and original proof under Assumptions~\textbf{(A1)--(A4$^*$)}, which relies on a combination of uniform BMO energy estimates and Vitali’s convergence theorem.

As a byproduct of our characterization results, we derive new results for dynamic risk measures, that we split in two classes. For cash-additive, normalized risk measures, Theorem~\ref{thm:risk} recovers and extends the Kupper--Schachermayer result in continuous time: law-invariance and time-consistency, together with natural regularity conditions, force the risk measure to be entropic. As a consequence, we show that a dynamic shortfall risk measure is time-consistent if and only if its loss function is linear or exponential (Theorem~\ref{cor:shortfall}).

For cash non-additive risk measures, Theorem~\ref{thm:cash_nonadd_charac} provides a complete characterization under Assumption~\textbf{(H)}: every law-invariant risk measure generated by a BSDE admits a time-dependent certainty equivalent representation
$$
\rho_{t,T}(X) = \Phi^{-1}\!\big(t,\, \E[\Phi(T,X)\,|\,\Fc_t]\big),
$$
where $\Phi(s,\cdot)$ is a strictly monotone deterministic function determined by the generator. This extends the framework of cash non-additive risk measures introduced in \cite{di2024cash} and complements the recent generalized shortfall approach of \cite{di2026capturing}. In particular, the structural restriction observed in the cash-additive setting does not persist: once cash-additivity is relaxed, a rich class of non-entropic law-invariant risk measures emerges, including the generalized $q$-entropic risk measures of \cite{ma2021generalized, di2024cash}.

We work in the quadratic BSDE framework initiated by Kobylanski \cite{Kobylanski2000}. Concerning bounded terminal conditions, establishing necessary conditions on $L^\infty(\Fc_T)$ is strictly stronger than on larger domains (e.g.\ $L^2(\Fc_T)$ or exponentially integrable random variables), since law-invariance on a larger space implies law-invariance on its bounded subspace; hence any structural restriction derived from bounded payoffs remains valid \emph{a fortiori} for any extension of the operator to an unbounded domain.
\paragraph{Organization}
Section~\ref{sec:prelim} presents the notation and recalls the well-posedness theory for quadratic BSDEs and the properties of $g$-expectations.
Section~\ref{sec:LI} introduces the four notions of law-invariance, establishes their equivalence, and presents the characterization theorems in the three regimes.
Section~\ref{sec:risk} applies these results to cash-additive and cash non-additive dynamic risk measures.
Appendix~\hyperref[app:A]{A} contains the technical results on quadratic BSDEs, including the differentiability theorem.
Appendices~\hyperref[app:B]{B}--\hyperref[app:D]{D} gather results on conditional atomlessness, invariance properties of Brownian motion, and the change-of-variables construction.

\section{Preliminaries on quadratic g-expectations} \label{sec:prelim}
\paragraph{Notation}
Fix $T>0$. Let $(\Omega,\Fc,\P)$ support a $d$--dimensional Brownian motion
$W=(W_t)_{0\le t\le T}$ with augmented filtration $\F=(\Fc_t)_{0\le t\le T}$.
Unless otherwise stated, equalities and inequalities between processes (or random fields) hold
$dt\otimes d\P$--a.e. We identify the Euclidean scalar product on $\R^d$ with
multiplication: for $z,z'\in\R^d$ we write $zz':=\langle z,z'\rangle$ and
$|z|^2:=zz$. For $\Gc$ a sub-$\sigma$-field of $\Fc$, $X\stackrel{d}{=}Y$ denotes equality in law, and when needed we write
$Law(X | \Gc)=Law(Y | \Gc)$, $\P$-a.s., for almost sure equality of conditional laws. 
Consider a probability measure $\Q$ on $(\Omega,\Fc)$. We use the following spaces and conventions:

\begin{itemize}\setlength\itemsep{10pt}
\item $L^\infty(\Gc)$ denotes the space of essentially
bounded $\Gc$--measurable random variables endowed with the norm
$$
\|X\|_{L^\infty(\Gc)}:=\mathrm{ess\,sup}_{\omega\in\Omega}|X(\omega)|,
$$
and $L_+^{\infty}(\Gc)$ denote the subset of $L^\infty(\Gc)$ of non-negative random variables.
\item For $p\in[1,\infty)$, $L^{p}(\Q)$ denotes the usual space of $\Fc$--measurable random variables with finite $p$--moment under $\Q$, endowed with the norm
$$
\|X\|_{L^{p}(\Q)}:=\Big(\E^{\Q}[|X|^{p}]\Big)^{1/p}.
$$
We write $L^{p}(\Gc,\Q)$ for the subspace of $L^{p}(\Q)$ consisting of $\Gc$--measurable random variables. 
\item For $t\in[0,T]$, $\Tc_{t,T}$ denotes the set of $\F$--stopping times taking values in $[t,T]$. If $\tau\in\Tc_{t,T}$, then $\Tc_{t,\tau}$ denotes the set of $\F$--stopping times $\sigma$ such that $\sigma\in\Tc_{t,T}$ and $\sigma\le \tau$ $\P$-a.s.

\item For $p\ge1$, $\Sc^{p}(\Q)$ denotes the space of $\F$--progressively measurable c\`adl\`ag processes $Y$ such that the norm
$$
\|Y\|_{\Sc^{p}(\Q)}:=\Big(\E^{\Q}\big[\sup_{0\le s\le T}|Y_s|^{p}\big]\Big)^{1/p}<\infty.
$$
We write $\Sc^{\infty}$ for the corresponding essential supremum space, i.e.\ the space of $\F$--progressively measurable c\`adl\`ag processes $Y$ such that
$$
\|Y\|_{\Sc^{\infty}}:=\Big\|\sup_{0\le s\le T}|Y_s|\Big\|_{L^{\infty}(\Fc_T)}<\infty.
$$

\item For $p\ge1$, $\Hc^{p}(\Q)$ denotes the space of $\F$--predictable processes $Z$ such that
$$
\|Z\|_{\Hc^{p}(\Q)}:=\Big(\E^{\Q}\Big[\Big(\int_0^T|Z_s|^2\,ds\Big)^{p/2}\Big]\Big)^{1/p}<\infty.
$$
When no measure is specified, we write $\Sc^{p}$ and $\Hc^{p}$ as shorthand for $\Sc^{p}(\P)$ and $\Hc^{p}(\P)$, respectively. For a subinterval $[t,s]\subseteq[0,T]$, we write $\Hc^{p}_{[t,s]}$ for the analogous space with the integration over $[t,s]$, i.e.\ $\|Z\|_{\Hc^{p}_{[t,s]}}:=\bigl(\E\bigl[(\int_t^s|Z_r|^2\,dr)^{p/2}\bigr]\bigr)^{1/p}$.

\item $\mathrm{BMO}$ denotes the space of continuous martingales $M$ on $[0,T]$ with $M_0=0$ such that the BMO norm
$$
\|M\|_{\mathrm{BMO}}^{2}
:=\sup_{\tau\in\Tc_{0,T}}
\Big\|\E\big[\langle M\rangle_T-\langle M\rangle_{\tau}\ \big|\ \Fc_{\tau}\big]\Big\|_{L^{\infty}(\Fc_T)}<\infty.
$$
 If $\theta\in\Hc^{2}$ is predictable, we write $\theta\in\mathrm{BMO}$ whenever the martingale $M^\theta:=\int_0^{\cdot}\theta_s\,dW_s$ belongs to $\mathrm{BMO}$, and we then set
$$
\|\theta\|_{\mathrm{BMO}}
:=\Big\|\int_0^{\cdot}\theta_s\,dW_s\Big\|_{\mathrm{BMO}}
=\sup_{\tau\in\Tc_{0,T}}
\Big\|\E\Big[\int_{\tau}^{T}|\theta_s|^2\,ds\ \Big|\ \Fc_{\tau}\Big]\Big\|_{L^{\infty}(\Fc_T)}^{1/2}.
$$
\item For any interval $I \subset \R$ and $p>0$, $L^{p}(I)$ denotes the space of measurable functions $u: I \rightarrow \R$ such that
$$\|u\|_{L^p(I)}:= \left(\int_a^b \lvert u(t) \lvert^p dt\right)^\frac{1}{p} < \infty.$$

\item We write $C_b^2(\R)$ for the space of twice continuously differentiable functions $\psi:\R\to\R$ such that
$\psi,\psi'$ and $\psi''$ are bounded.
For maps $\varphi:[0,T]\times\R\to\R$ and integers $k,\ell\ge0$, we write $\varphi\in C^{k,\ell}([0,T]\times\R)$ if
\begin{itemize}\setlength\itemsep{2pt}
\item for each $j=0,\dots,k$ and $m=0,\dots,\ell$, the mixed partial derivative $\partial_{t^j y^m}\varphi$
exists (in the classical sense) on $[0,T]\times\R$ whenever $j+m\ge1$;
\item all derivatives $\partial_{t^j y^m}\varphi$ with $0\le j\le k$ and $0\le m\le \ell$ are continuous on
$[0,T]\times\R$.
\end{itemize}
\end{itemize}

We start this section by reminding the well-posedness results for a quadratic BSDE established in \cite{Kobylanski2000}.
For $X \in L^{\infty}(\Fc_T)$ consider
\begin{equation}\label{eq:BSDE}
Y_t = X + \int_t^T g(s, Y_s, Z_s)ds - \int_t^T Z_s dW_s,
\qquad \forall t\in[0,T].
\end{equation}
Throughout the rest of the paper, we denote \eqref{eq:BSDE} as BSDE($T, X, g$) and assume that the generator $g$ takes the form:
$$
 g(t,\omega,y,z)=g_1(t,\omega,y,z)y+g_2(t,\omega,y,z), \quad ~~ \forall (t,\omega,y,z) \in
 [0,T] \times \Omega  \times \R \times \R^d,
$$
and satisfies the following \textit{Assumptions}:
 \begin{itemize}
 \item[{\bf (A1)}] Both $g_1$ and $g_2$ are progressively measurable
 and both $g_1(t,\omega,\cdot,\cdot)$ and $g_2(t,\omega,\cdot,\cdot)$ are continuous uniformly in $(t,\omega) \in
 [0,T] \times \Omega$.
 \item[{\bf (A2)}] There exist a constant $\kappa>0$ and an increasing  continuous function $\ell:
 \R^+ \to \R^+$, such that for $dt \otimes d\P$-a.e. $(t,\omega) \in
 [0,T] \times \Omega$, $$ |g_1(t,\omega,y,z)| \leq \kappa \quad \mbox{and} \quad
  |g_2(t,\omega,y,z)| \leq \kappa +\ell(|y|)|z|^2,\quad ~~ (y,z) \in \R \times
 \R^d. $$
 \item[{\bf (A3)}] With the same increasing continuous function $\ell$, for $dt \otimes d\P$-a.e.
 $(t,\omega) \in [0,T] \times \Omega$,
 $$
 \Big| \partial_z g (t,\omega,y,z)\Big| \leq
\ell(|y|)(1+|z|),\quad \quad (y,z) \in \R \times \R^d.
 $$
\item[{\bf (A4)}] For any $ \eps > 0$, there exists a positive function $h_{\eps}(t) \in L^1\left([0,T] \right)$
such that for $dt \otimes d\P$-a.e. $(t,\omega) \in [0,T] \times \Omega$,
$$ \partial_y g(t,\omega,y,z) \leq h_{\eps}(t)+\eps|z|^2,\quad \quad  (y,z)
\in \R \times \R^d. $$
\end{itemize}
 \begin{Remark}
Whenever there is no ambiguity, we suppress the $\omega$--dependence and write
$g(t,y,z)$ for $g(t,\omega,y,z)$. When we need to emphasize randomness we write
$g(t,\omega,y,z)$ explicitly, and we call $g$ deterministic if $g(t,\omega,y,z)$ is independent of $\omega$ $dt\otimes d\P$--a.e. Similarly, when $g$ does not depend on $y$ we write $g(t,\omega,z)$ or simply $g(t,z)$.
 \end{Remark}

Under \textit{Assumptions} \textbf{(A1)}–\textbf{(A4)} for any $X \in L^{\infty}(\Fc_T)$,
\eqref{eq:BSDE} admits a unique solution $(Y,Z)\in\Sc^{\infty}\times\Hc^2$ (see~\cite{Kobylanski2000}). Furthermore, we present the following assumptions to guarantee the differentiability result studied in the appendix (see Theorem \ref{thm:diff}):

\smallskip

\emph{Assumption} \textbf{(A4$^{*}$)}: (a stronger version of \textbf{(A4)}) For $dt\otimes d\P$-a.e. $(t,\omega)\in[0,T]\times\Omega$, the map
$(y,z)\mapsto g(t,\omega,y,z)$ is of class $C^1(\R\times\R^d)$ i.e. both
$\partial_y g(t,\omega,\cdot,\cdot)$ and $\partial_z g(t,\omega,\cdot,\cdot)$
exist and are continuous uniformly in $(t,\omega)$. Moreover, $\partial_y g$ satisfies: For any $\eps > 0$, there exists a positive function $h_{\eps} \in L^1\left([0,T] \right)$
such that for $dt \otimes d\P$-a.e. $(t,\omega) \in [0,T] \times \Omega$,
$$ \Big \lvert \partial_y g (t,\omega,y,z) \Big \lvert \leq h_{\eps}(t)+\eps|z|^2,\quad \quad  (y,z)
\in \R \times \R^d. $$

 We now recall the notion of nonlinear $g$-expectation introduced by Peng (\cite{Peng}).
For any $\sigma$, $\tau \in \Tc_{0,T}$ such that $\sigma \leq \tau$, $\P$-a.s., the quadratic $g$-expectation is the operator $$\Ec^g_{\sigma,\tau}:
L^{\infty}(\Fc_{\tau}) \to L^{\infty}(\Fc_{\sigma})$$ defined by $\Ec^g_{\sigma,\tau}[X]
:= Y_\sigma$, where $X \in L^{\infty}(\Fc_{\tau})$, and $(Y,Z)$ is the solution of BSDE($\tau, X, g$).
In particular, if $\tau=T$, we denote the quadratic $g$-expectation of $X$ given $\Fc_{\sigma}$ by $\Ec^g[X|\Fc_{\sigma}] := \Ec^g_{\sigma,T}[X]$.
We recall the following properties of the $g$-expectation operator, which play an important role in the proof of the main result.

\begin{Proposition} \label{prop:Eg}
Let Assumptions \textbf{(A1)}–\textbf{(A4)} hold, then $\Ec^g$ satisfies, for any $\rho$, $\sigma$,
$\tau \in \Tc_{0,T}$ with $\rho \le \sigma \le \tau$, $\P$-a.s. :
\begin{itemize}    
\item[\(1.\)] {\it Time-Consistency:}
 $$
\Ec^g_{\rho,\sigma}\big[\Ec^g_{\sigma,\tau}[X]\big]=\Ec^g_{\rho,\tau}[X],
\quad ~~ \P\text{-a.s.} \quad \forall X \in L^{\infty}(\Fc_{\tau}).
$$

\item[\(2.\)] {\it Constant-Preserving:}~~Assume $g(t,\omega, y,0) = 0$,  for $dt \otimes d\P$-a.e.
 $(t,\omega) \in [0,T] \times \Omega$, for all $y \in \R$, then
$$\Ec^g_{\sigma,\tau}[X]=X, \quad \P\text{-a.s.}, ~~ \forall X \in
L^{\infty}(\Fc_{\sigma}).$$

\item[\(3.\)] {\it Zero-one Law:}~~For any $X, X^{\prime} \in L^{\infty}(\Fc_{\tau})$
and $A \in \Fc_{\sigma}$, we have $$\Ec^g_{\sigma,\tau}[\1_A X + \1_{A^c}X^{\prime} ]=\1_A
\Ec^g_{\sigma,\tau}[X] + \1_{A^c}\Ec^g_{\sigma,\tau}[X^{\prime}],~~\P\text{-a.s.}$$ 
Moreover, if $g(t,0,0)=0$, $dt \otimes d\P$-a.e.,
then $$\Ec^g_{\sigma,\tau}[\1_A X]=\1_A \Ec^g_{\sigma,\tau}[X], \quad \P\text{-a.s}.$$

\item[\(4.\)] {\it Translation Invariance :}~~If $g$ is independent of $y$, then
$$
 \Ec^g_{\sigma,\tau}[X+X^{\prime}]=\Ec^g_{\sigma,\tau}[X]+X^{\prime},\quad \P\text{-a.s.} \quad
\forall X \in L^{\infty}(\Fc_{\tau}),\quad X^{\prime} \in L^{\infty}(\Fc_{\sigma}).
$$

\item[\(5.\)] {\it Strict Monotonicity:}~~For any $X, X^{\prime} \in L^{\infty}(\Fc_{\tau})$ with $X \geq X^{\prime}$,
$\P$-a.s., we have $\Ec^g_{\sigma,\tau}[X] \geq \Ec^g_{\sigma,\tau}[X^{\prime}]$,
$\P$-a.s.; Moreover, if $\Ec^g_{\sigma,\tau}[X] = \Ec^g_{\sigma,\tau}[X^{\prime}]$,
$\P$-a.s., then $X=X^{\prime}$, $\P$-a.s.
 \end{itemize}
 \end{Proposition}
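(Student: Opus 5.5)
All five properties follow from the well-posedness, comparison and stability theory for quadratic BSDEs under \textbf{(A1)}–\textbf{(A4)} (Kobylanski \cite{Kobylanski2000}; Ma–Yao \cite{Ma02062010} for the stopping-time setting). For a terminal time $\tau\in\Tc_{0,T}$, BSDE($\tau,X,g$) is read as the BSDE on $[0,T]$ with generator $g\1_{\{s\le\tau\}}$ and terminal value $X$. This truncated generator still satisfies \textbf{(A1)}–\textbf{(A4)}, so a unique solution $(Y,Z)\in\Sc^\infty\times\Hc^2$ exists. Moreover $Y_s=X$ and $Z_s=0$ for $s\ge\tau$.

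\emph{Time-consistency.} Let $(Y,Z)$ solve BSDE($\tau,X,g$). On $[0,\sigma]$ the pair satisfies
\[
Y_t=Y_\sigma+\int_t^\sigma g(s,Y_s,Z_s)\,ds-\int_t^\sigma Z_s\,dW_s .
\]
Hence it solves BSDE($\sigma,Y_\sigma,g$), and uniqueness gives $\Ec^g_{\rho,\sigma}[Y_\sigma]=Y_\rho$.

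\emph{Constant preservation.} For $X\in L^\infty(\Fc_\sigma)$, set $Y_t=X$ on $[\sigma,\tau]$, with $Y_t$ the solution of BSDE($\sigma,X,g$) for $t\le\sigma$, and $Z=0$ on $[\sigma,\tau]$. Since $g(s,X,0)=0$, this pair solves BSDE($\tau,X,g$). Uniqueness then gives $\Ec^g_{\sigma,\tau}[X]=X$.

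\emph{Zero-one law.} Let $(Y,Z)$ and $(Y',Z')$ be the solutions with terminal values $X$ and $X'$, and take $A\in\Fc_\sigma$. Define
\[
\bar Y_t=\1_AY_t+\1_{A^c}Y'_t,\qquad \bar Z_t=\1_AZ_t+\1_{A^c}Z'_t \quad\text{for } t\ge\sigma .
\]
Because $\1_A$ is $\Fc_\sigma$-measurable, it passes through the stochastic integral on $[t,\tau]$ for $t\ge\sigma$. It also passes through the generator, since $\1_Ag(s,Y_s,Z_s)+\1_{A^c}g(s,Y'_s,Z'_s)=g(s,\bar Y_s,\bar Z_s)$. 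So $(\bar Y,\bar Z)$ solves the equation on $[\sigma,\tau]$, and we extend it below $\sigma$ by solving from $\bar Y_\sigma$. Uniqueness yields the first identity. For the second, take $X'=0$. When $g(t,0,0)=0$, the pair $(0,0)$ solves BSDE($\tau,0,g$), so $\Ec^g_{\sigma,\tau}[0]=0$ and the identity follows.

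\emph{Translation invariance.} When $g$ does not depend on $y$ and $X'\in L^\infty(\Fc_\sigma)$, the pair $(Y_t+X',Z_t)$ on $[\sigma,\tau]$ solves the equation with terminal value $X+X'$. This is because the generator sees only $Z$ and $X'$ is constant on $[\sigma,\tau]$ given $\Fc_\sigma$. Uniqueness concludes.

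\emph{Strict monotonicity.} Weak monotonicity is Kobylanski's comparison theorem, applied on $[\sigma,\tau]$. For strictness, suppose $X\ge X'$ and set $\delta Y=Y-Y'\ge0$ and $\delta Z=Z-Z'$. Write the generator difference as $a_s\,\delta Y_s+b_s\,\delta Z_s$, with $a$ and $b$ obtained by the mean value theorem on difference quotients. Assumption \textbf{(A4)} gives $a_s\le h_\eps(s)+\eps(|Z_s|^2+|Z'_s|^2)$. Assumption \textbf{(A3)} gives $|b_s|\le \ell(\cdot)(1+|Z_s|+|Z'_s|)$, and $Z,Z'\in\mathrm{BMO}$ by the standard bounded-terminal-value quadratic estimate, so $b\in\mathrm{BMO}$. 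Hence $\Q$ defined by $d\Q/d\P=\mathcal E(\int b\,dW)$ is an equivalent probability measure (Kazamaki).

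Linearizing, $\delta Y_t\,e^{\int_\sigma^t a_r dr}$ is a $\Q$-supermartingale-type process. Concretely, using $\delta Y\ge0$ and the one-sided bound on $a$,
\[
\delta Y_\sigma \ \ge\ \E^\Q\Big[e^{-\int_\sigma^\tau a^+_r dr}\,(X-X')\,\Big|\,\Fc_\sigma\Big].
\]
Here we may replace $a$ by its positive part, since $\delta Y\ge0$ makes the negative part only help. If $\delta Y_\sigma=0$, then $(X-X')e^{-\int a^+}=0$ $\Q$-a.s. Provided $\int_\sigma^\tau a^+_r\,dr<\infty$ a.s., which holds because $Z\in\Hc^2$ and $h_\eps\in L^1$, this forces $X=X'$ a.s., using $\Q\sim\P$.

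The main obstacle is this last step. The concern is integrability: $e^{-\int a^+}$ is bounded by $1$, so it is harmless, and we only need $\Q\sim\P$ and finiteness of the exponent. The BMO property of $Z$ and $Z'$ is the key input that has to be justified.
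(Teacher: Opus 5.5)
Your arguments for items 1--4 and for the weak half of item 5 are correct. They follow the route the paper intends: the paper gives no proof, only citing Ma--Yao and noting that everything rests on uniqueness and Kobylanski's comparison theorem.

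The gap is in the strict half of item 5, at your displayed inequality, where the sign reasoning is reversed. Since $\delta Y\ge 0$, you have $a_s\delta Y_s\ge -a_s^-\delta Y_s$, so it is $a^+$ that may be discarded and $a^-$ that must be kept. With $A^-_s:=\int_\sigma^s a_r^-\,dr$, It\^o's formula gives $d\big(e^{-A^-_s}\delta Y_s\big)=-e^{-A^-_s}a_s^+\delta Y_s\,ds+e^{-A^-_s}\delta Z_s\,dW^{\Q}_s$. This is a nonnegative $\Q$-local supermartingale, hence
\[
\delta Y_\sigma\ \ge\ \E^{\Q}\Big[e^{-\int_\sigma^\tau a_r^-\,dr}\,(X-X')\,\Big|\,\Fc_\sigma\Big].
\]
Your version with $a^+$ is false. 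For $g(t,y,z)=-cy$ with $c>0$ one has $\Q=\P$, $a\equiv-c$, $a^+\equiv0$, and $\delta Y_\sigma=\E[e^{-c(\tau-\sigma)}(X-X')\,|\,\Fc_\sigma]$. After taking expectations, this contradicts your bound $\E[X-X'\,|\,\Fc_\sigma]$ as soon as $\sigma<\tau$ are deterministic and $X\ge X'$, $X\neq X'$.

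With the correct inequality, $\delta Y_\sigma=0$ only yields $e^{-\int_\sigma^\tau a^-_r\,dr}(X-X')=0$. So you need $\int_\sigma^\tau a_r^-\,dr<\infty$ a.s., i.e.\ a \emph{lower} bound on $\partial_y g$. Assumption \textbf{(A4)} bounds $\partial_y g$ only from above. It therefore controls $a^+$, which is the quantity you checked but which plays no role here.

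This cannot be patched within \textbf{(A1)}--\textbf{(A4)}. Let $g(t,y,z):=-t\,\psi(y/t^2)$ with $\psi(r):=4r(1+r^2)^{-1/4}$. This $\psi$ is smooth, odd, increasing, globally $\tfrac12$-H\"older, satisfies $|\psi(r)|\le 4\sqrt{|r|}$, and has $\psi(r_0)=2r_0$ for $r_0=\sqrt{15}$. Then $g$ satisfies \textbf{(A1)}--\textbf{(A4)}: it is $\tfrac12$-H\"older in $y$ uniformly in $t$, has growth at most $4\sqrt{|y|}$, and $\partial_y g\le 0$. Both $(Y,Z)=(0,0)$ and $(Y_t,Z_t)=(r_0t^2,0)$ solve the BSDE, with terminal values $0$ and $r_0T^2$ respectively. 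Hence $\Ec^g_{0,T}[r_0T^2]=\Ec^g_{0,T}[0]=0$, although the terminal values differ. Along this pair $a_s=-2/s$, so $\int_0^T a_s^-\,ds=\infty$, which is exactly the obstruction.

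So the strict part of item 5 needs an additional hypothesis, for instance the two-sided bound in \textbf{(A4$^*$)}. Splitting the increment as $[g(Y,Z)-g(Y',Z)]+[g(Y',Z)-g(Y',Z')]$, that bound gives $a_s^-\le h_\eps(s)+\eps|Z_s|^2$, and your corrected argument then goes through. Note that uniqueness plus (weak) comparison, the only ingredients the paper's remark invokes, cannot supply this either, since both hold in the example above.
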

\begin{Remark}
    As stated in \cite{Ma02062010}, the proofs rely exclusively on the uniqueness of the BSDE \eqref{eq:BSDE} and the comparison principle \cite{Kobylanski2000}. 
\end{Remark}
\section{Law-invariant BSDEs} \label{sec:LI}
From now on, we use the notation of $g$-expectations to designate the solution of a BSDE given a terminal time $T$ and a driver $g$.
\subsection{Static and dynamic notions of law-invariance for BSDEs}
We provide four notions of law-invariance for $g$-expectations defined through nonlinear BSDEs.
The first one (LI) appears in the continuous-time BSDE literature, see \cite{XuXuZhou2022}.
The second one (CLI) is the natural conditional and dynamic version at a fixed maturity $T$.
The third notion (CLLI) is a continuous-time analogue of the \emph{local} (or \emph{one-step}) law-invariance studied for time-consistent dynamic risk measures in discrete time; see, e.g., Cohen and coauthors for the conditional local law-invariance \cite{cohen},
and the surveys \cite{beatrice, de2023convex}.
In continuous time, ``one-step'' must be interpreted through \emph{short horizons} and thus requires varying the
deterministic terminal time $T^{\prime}$, possibly, close to $\sigma$. 

Finally, the fourth notion (MLI) is new in continuous time and is tailored to achieve law-invariance when the
\emph{maturity itself varies}: it compares $\Ec^g_{\sigma,\tau}[X]$ and $\Ec^g_{\sigma,\tau'}[X']$ for
$\sigma\le\tau\le\tau'$, thereby extending the discrete-time local viewpoint to random, time-varying maturities.

\begin{Definition}[LI] \label{def:li}
    A generator $g$ of a BSDE is law-invariant if it verifies for all $ X, X^{\prime}  \in L^{\infty}(\Fc_{T})$
    $$ X  \stackrel{d}{=} X^{\prime} \ \Rightarrow ~~ \Ec_{0,T}^g \left[ X \right] = \Ec_{0,T}^g\left[X^{\prime}\right].$$
    \end{Definition}
    \begin{Definition}[CLI] \label{def:cli}
    A generator $g$ of a BSDE is conditional law-invariant if it verifies for all $\sigma \in \Tc_{0,T}$ and $ X, X^{\prime}  \in L^{\infty}(\Fc_{T})$
    $$ Law\left( X  | \Fc_{\sigma}\right) =  Law\left( X^{\prime}  | \Fc_{\sigma}\right),~\P\text{-a.s.}  \Rightarrow ~~ \Ec_{\sigma,T}^g \left[ X \right] = \Ec_{\sigma,T}^g\left[X^{\prime}\right], ~ \P \text{-a.s.}$$
    \end{Definition}
    \begin{Definition}[CLLI] \label{def:oscli}
    A generator $g$ of a BSDE is conditional locally law-invariant if it verifies for all $T^{\prime} \in [0,T]$ and $\sigma\in \Tc_{0,T}$ such that $\sigma \leq T^{\prime}$, $\P$-a.s. and $ X, X^{\prime}  \in L^{\infty}(\Fc_{T^{\prime}})$
    $$ Law\left( X  | \Fc_{\sigma}\right) =  Law\left( X^{\prime}  | \Fc_{\sigma}\right),~\P\text{-a.s.}  \Rightarrow ~~ \Ec_{\sigma,T^{\prime}}^g \left[ X \right] = \Ec_{\sigma,T^{\prime}}^g\left[X^{\prime}\right], ~ \P \text{-a.s.}$$
    \end{Definition}
\begin{Definition}[MLI] \label{def:mcli}
    A generator $g$ of a BSDE is maturity law-invariant if it verifies for all $\sigma, \tau, \tau^{\prime}  \in \Tc_{0,T}$ such that $\sigma \leq \tau \leq \tau^{\prime}$, $\P$-a.s. and $ (X, X^{\prime})  \in L^{\infty}(\Fc_{\tau}) \times L^{\infty}(\Fc_{\tau^{\prime}}) $
    $$ Law\left( X  | \Fc_{\sigma}\right) =  Law\left( X^{\prime}  | \Fc_{\sigma}\right),~\P\text{-a.s.}  \Rightarrow ~~ \Ec_{\sigma,\tau}^g \left[ X \right] = \Ec_{\sigma,\tau^{\prime}}^g\left[X^{\prime}\right], ~ \P \text{-a.s.}$$
    \end{Definition}

\begin{Remark}\label{rmk:defs}\begin{itemize}
    \item 
    The discrete-time ``one-step'' viewpoint of \cite{cohen,beatrice} is inherently local in time. Allowing a \emph{random} terminal time in CLLI would make $\Ec^g_{\sigma,\tau}[X]$ sensitive to the coupling
between $X$ and $\tau$, which is not contained in $Law(X | \Fc_\sigma)$ alone; hence CLLI is meant as a local property at a \emph{prescribed} maturity $T^{\prime}$. We provide an example below that explicits the issue with a random terminal time.
    \item It is worth noting that 
    $$ MLI \implies CLLI \implies CLI \implies LI.$$
    \item The maturity law-invariance property implies the Constant-Preserving property. Let $\sigma, \tau \in \Tc_{0,T}$ such that $\sigma \leq \tau$ $\P$-a.s., then by MLI if $X \in L^{\infty}(\Fc_{\sigma}) \subset L^{\infty}(\Fc_{\tau})$ we have
    $$ \Ec^g_{\sigma, \tau} \left[ X \right] = \Ec^g_{\sigma, \sigma} \left[ X \right] = X, ~~ \P\text{-a.s.}$$
    \item To be more precise in the definition above. $Law\left( X  | \Fc_{\sigma}\right) =  Law\left( X^{\prime}  | \Fc_{\sigma}\right),~\P\text{-a.s.} $, means that for any $O \subset \R$ a Borel set, we have 
    $$ \P \left ( X \in O  | \Fc_{\sigma} \right) = \P \left ( X^{\prime} \in O  | \Fc_{\sigma} \right), ~\P\text{-a.s.} $$
    Note furthermore that it is equivalent to the following statement : For any $A \in \Fc_{\sigma}$, 
    $$ X \1_A  \stackrel{d}{=} X^{\prime} \1_A.$$
    \end{itemize}
\end{Remark}
\begin{Example}[\textit{Deterministic} vs. \textit{random} terminal times in CLLI]
Let $a\neq 0$ and consider the linear generator $g(t,\omega,y,z)=a\,y$.
For deterministic $0\le s<T^{\prime}\le T$, the associated $g$-expectation is explicit:
$\Ec^g_{s,T^{\prime}}[X]=e^{a(T^{\prime}-s)}\E[X|\Fc_s]$, hence it satisfies CLLI on deterministic horizons.

Let $0<t_1<t_2\le T$ and fix $A:= \{W_{t_1} \geq 0\} \in\Fc_{t_1}$. Set the finite-valued stopping time
$\tau=t_1\1_A+t_2\1_{A^c}$. For $X\in L^\infty(\Fc_\tau)$, the BSDE yields $\Ec^g_{t,\tau}[X]=e^{-at}\E[e^{a \tau}X | \Fc_t]$, which implies that $\Ec^g_{0,\tau}[X]=\E[e^{a\tau}X]$.
Choose $C:= A^c\in\Fc_{t_1}$ with $\P(C)=\P(A)$ but $\P(A\cap C) = 0$ and set $X=\1_A$, $X'=\1_C$.
Then $X\stackrel{d}{=}X'$, but
$\Ec^g_{0,\tau}[X]=e^{at_1}\P(A)\neq e^{at_2}\P(A) = \E[e^{a\tau}\1_C]=\Ec^g_{0,\tau}[X']$.
This shows that allowing a random terminal time in the one-step condition would make the value depend on the
coupling between $X$ and $\tau$, which is not captured by the law of $X$ alone but rather the joint law of $X$ and $\tau$. Note that, in the case of a random terminal time, the $g$-expectations depend on the joint law of $(X,\tau)$. 
\end{Example}

In the following Theorem, we show first that LI and CLI are actually equivalent and that all definitions are equivalent under the following sharp condition: $g(t,\omega, y,0) = 0$, for $dt \otimes d\P$-a.e. $(t,\omega) \in [0,T] \times \Omega$ and for all $y \in \R$ (see Corollary \ref{cor:mcli}).
\begin{Theorem}[Equivalence between LI, CLI, CLLI, and MLI definitions]\label{thm:const}
            Assume \textbf{(A1)}–\textbf{(A4)} hold. The following are equivalent :
 \begin{enumerate}
     \item[i)] The generator $g$ is LI;
     \item[ii)] The generator $g$ is CLI.
\end{enumerate}
In addition, if $g(t,\omega, y,0) = 0$, for $dt \otimes d\P$-a.e.
 $(t,\omega) \in [0,T] \times \Omega$ and for all $y \in \R$, then i) and ii) are both equivalent to
 \begin{enumerate}
     \item[iii)] The generator $g$ is CLLI;
     \item[iv)] The generator $g$ is MLI.
 \end{enumerate}
\end{Theorem}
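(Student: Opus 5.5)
Since Remark~\ref{rmk:defs} already gives MLI $\Rightarrow$ CLLI $\Rightarrow$ CLI $\Rightarrow$ LI, it suffices to prove two implications: LI $\Rightarrow$ CLI in general, and LI $\Rightarrow$ MLI under $g(t,\omega,y,0)=0$.

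\emph{LI $\Rightarrow$ CLI.} Fix $\sigma\in\Tc_{0,T}$ and $X,X'\in L^\infty(\Fc_T)$ with $Law(X|\Fc_\sigma)=Law(X'|\Fc_\sigma)$. Set $Y:=\Ec^g_{\sigma,T}[X]$ and $Y':=\Ec^g_{\sigma,T}[X']$, and argue by contradiction. Suppose $A:=\{Y>Y'\}\in\Fc_\sigma$ has positive probability. The natural test variable is
$$\xi:=\1_A X+\1_{A^c}X',$$
compared with $X'$.

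\begin{itemize}
\item \emph{Equality of laws.} By the last bullet of Remark~\ref{rmk:defs}, $\1_B X\stackrel{d}{=}\1_B X'$ for every $B\in\Fc_\sigma$. Equality of conditional laws on $A$ gives $Law(\xi|\Fc_\sigma)=Law(X'|\Fc_\sigma)$, hence $\xi\stackrel{d}{=}X'$.
\item \emph{Values at time $0$.} The zero-one law (Proposition~\ref{prop:Eg}(3)) gives $\Ec^g_{\sigma,T}[\xi]=\1_AY+\1_{A^c}Y'\ge Y'$, with strict inequality on $A$. Time-consistency writes $\Ec^g_{0,T}[\cdot]=\Ec^g_{0,\sigma}[\Ec^g_{\sigma,T}[\cdot]]$.
\item \emph{Contradiction.} Strict monotonicity of $\Ec^g_{0,\sigma}$ (Proposition~\ref{prop:Eg}(5), applied on $L^\infty(\Fc_\sigma)$ with terminal time $\sigma$) gives $\Ec^g_{0,T}[\xi]\ne\Ec^g_{0,T}[X']$. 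This contradicts LI, so $\P(A)=0$. The symmetric argument handles $\{Y<Y'\}$.
\end{itemize}

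\emph{LI $\Rightarrow$ MLI when $g(\cdot,y,0)=0$.} Let $\sigma\le\tau\le\tau'$, $X\in L^\infty(\Fc_\tau)$ and $X'\in L^\infty(\Fc_{\tau'})$ with equal conditional laws given $\Fc_\sigma$. Under the constant-preserving assumption, $X\in L^\infty(\Fc_\tau)\subset L^\infty(\Fc_T)$ is $\Fc_\tau$-measurable, so $\Ec^g_{\tau,T}[X]=X$. Time-consistency then gives $\Ec^g_{\sigma,T}[X]=\Ec^g_{\sigma,\tau}[\Ec^g_{\tau,T}[X]]=\Ec^g_{\sigma,\tau}[X]$, and likewise $\Ec^g_{\sigma,T}[X']=\Ec^g_{\sigma,\tau'}[X']$. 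Regarding $X,X'$ as elements of $L^\infty(\Fc_T)$, CLI (already obtained from LI) yields $\Ec^g_{\sigma,T}[X]=\Ec^g_{\sigma,T}[X']$, which is exactly MLI. CLLI follows as the special case $\tau=\tau'=T'$, or directly from the chain in Remark~\ref{rmk:defs}.

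\emph{Main obstacle.} The delicate point is the strict-monotonicity step at a stopping time. We need $\Ec^g_{0,\sigma}[\eta]>\Ec^g_{0,\sigma}[\eta']$ whenever $\eta\ge\eta'$ with $\P(\eta>\eta')>0$. Proposition~\ref{prop:Eg}(5) states this as "equality forces $\eta=\eta'$", which combined with monotonicity gives a strict inequality at time $0$, because $\Fc_0$ is trivial and the time-$0$ values are constants. A second check is the measurability claim that equal conditional laws given $\Fc_\sigma$ survive the pasting by $\1_A$. This reduces to computing $\P(\xi\in O|\Fc_\sigma)=\1_A\P(X\in O|\Fc_\sigma)+\1_{A^c}\P(X'\in O|\Fc_\sigma)$, which is routine since $A\in\Fc_\sigma$.
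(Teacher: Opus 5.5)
Your proof is correct and follows the paper's argument: LI $\Rightarrow$ CLI via the zero-one law, time-consistency and strict monotonicity of $\Ec^g_{0,\sigma}$, then MLI from CLI via constant preservation combined with time-consistency. The only difference is cosmetic: the paper compares $\1_A X$ with $\1_A X'$ (pasting with $0$ on $A^c$, so $\Ec^g_{\sigma,T}[0]$ terms appear), whereas you compare $\1_A X+\1_{A^c}X'$ with $X'$.
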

\begin{proof}
Recall that, by Remark \ref{rmk:defs}, we have CLI $\Rightarrow LI$. To prove the equivalence between the LI and CLI properties, it remains to show that LI $\Rightarrow$ CLI. Fix $0\le \sigma\le T$ and $X,X^{\prime}\in L^\infty(\Fc_T)$ such that
 $$Law\left( X  | \Fc_{\sigma}\right) =  Law\left( X^{\prime}  | \Fc_{\sigma}\right),~\P\text{-a.s.}$$  
Then, for any $A\in\Fc_\sigma$, we have that $$ X \1_A  \stackrel{d}{=} X^{\prime} \1_A.  $$
Hence, by law-invariance of the $g$-expectation, we obtain
$$ \Ec^g_{0, T}[\1_A X] = \Ec^g_{0, T}[\1_A X^{\prime}] ,\qquad \forall\,A\in\Fc_\sigma.
$$
Furthermore, by Time-Consistency, and the Zero-one Law property of the $g$-expectation, we have both $X$ and $X^{\prime}$
$$
\Ec^g_{0, T}[\1_A X]
=\Ec^g_{0,\sigma}\!\Big[\Ec^g_{\sigma,T}[\1_A X]\Big]
=\Ec^g_{0,\sigma}\!\Big[\1_A\,\Ec^g_{\sigma,T}[X] + \1_{A^c}\,\Ec^g_{\sigma,T}[0]\Big],$$
and
$$
\Ec^g_{0, T}[\1_A X^{\prime}]
=\Ec^g_{0,\sigma}\!\Big[\Ec^g_{\sigma,T}[\1_A X^{\prime}]\Big]
=\Ec^g_{0,\sigma}\!\Big[\1_A\,\Ec^g_{\sigma,T}[X^{\prime}] + \1_{A^c}\,\Ec^g_{\sigma,T}[0]\Big].$$
Hence, for all $A\in\Fc_\sigma$,
$$
\Ec^g_{0,\sigma}\!\big[\1_A Y + \1_{A^c}\,\Ec^g_{\sigma,T}[0] \big]=\Ec^g_{0,\sigma}\!\big[\1_A Y^{\prime} + \1_{A^c}\,\Ec^g_{\sigma,T}[0] \big],$$
where 
$$Y:=\Ec^g_{\sigma,T}[X]\in L^\infty(\Fc_\sigma), \quad \text{ and } \quad Y^{\prime}:=\Ec^g_{\sigma,T}[X^{\prime}]\in L^\infty(\Fc_\sigma).$$

Let $D:=\{Y>Y^{\prime}\}\in\Fc_\sigma$. If $\P(D)>0$, then $\1_D Y\ge \1_D Y^{\prime}$ and $\P(\1_D Y>\1_D Y^{\prime})>0$, so by strict monotonicity of $\Ec^g_{0,\sigma}$,
$$
\Ec^g_{0,\sigma}[\1_D Y + \1_{D^c}\,\Ec^g_{\sigma,T}[0] ]>\Ec^g_{0,\sigma}[\1_D Y^{\prime} + \1_{D^c}\,\Ec^g_{\sigma,T}[0]],
$$
contradicting the equality above. Thus $\P(D)=0$. Applying the same argument to $D':=\{Y^{\prime}>Y\}$ yields $\P(D')=0$, hence $Y=Y^{\prime}$ a.s., i.e.
$$
\Ec^g_{\sigma,T}[X]=\Ec^g_{\sigma,T}[X^{\prime}]\quad \P\text{-a.s.}
$$

We now prove that CLI $\Rightarrow$ CLLI and MLI under the assumption $g(t,\omega, y,0) = 0$, for $dt \otimes d\P$-a.e.
 $(t,\omega) \in [0,T] \times \Omega$ and for all $y \in \R$. Fix $\sigma$, $\tau$, $\tau^{\prime}$ in  $\Tc_{0,T}$ such that $\sigma \leq \tau \leq \tau^{\prime}$. Let $X \in L^{\infty} \left(\Fc_{\tau}\right)$ and $X^{\prime} \in L^{\infty} \left(\Fc_{\tau^{\prime}}\right)$ satisfying
$$  Law\left( X  | \Fc_{\sigma}\right) =  Law\left( X^{\prime}  | \Fc_{\sigma}\right),~\P\text{-a.s.} $$
Then, by conditional law-invariance, we obtain $$ \Ec^g_{\sigma, T} \left [X \right] = \Ec^g_{\sigma, T} \left [X^{\prime} \right], ~ \P\text{-a.s.} $$ Moreover, by the Constant-Preserving property (since $g(t,\omega, y, 0) = 0$), we have that 
$$ \Ec^g_{\sigma, T} \left [X \right] = \Ec^g_{\sigma, \tau} \left [X \right], ~~ \text{ and, } ~~ \Ec^g_{\sigma, T} \left [X^{\prime} \right] = \Ec^g_{\sigma, \tau^{\prime}} \left [X^{\prime} \right].$$
Therefore, CLI $\Rightarrow$ MLI, which, by Remark \ref{rmk:defs}, implies CLLI. Therefore, the equivalence between LI, CLI, CLLI, and MLI follows, which concludes the proof.

\end{proof}
Let us now present the main results of our paper, which are necessary and sufficient conditions for law-invariance in three different cases $g(t,\omega, y, 0) = 0$, $g(t,\omega, y, 0)$ is \textit{deterministic} and the final \textit{non-deterministic} case. 
\subsubsection{Case $g(t, \omega, y, 0) = 0$}
In the following, we present a necessary and sufficient condition for the weakest form of law-invariance (LI) in the case where  $g(t, \omega, y, 0) = 0$. By Theorem~\ref{thm:const}, the 
four notions are equivalent in this case, so it suffices 
to characterize the weakest one (LI).
\begin{Theorem}\label{thm:main}
Assume that $g$ satisfies \textbf{(A1)}–\textbf{(A4)} and  $g(t,\omega, y,0) = 0$, for $dt \otimes d\P$-a.e.
 $(t,\omega) \in [0,T] \times \Omega$ and for all $y \in \R$. 
The BSDE \eqref{eq:BSDE}
is \emph{law-invariant} (LI) if and only if the generator $g$ is deterministic $dt \otimes d\P$-a.e and there exists a deterministic function $k:\R \to\R$ such that for all $(y,z) \in \R \times \R^d$ and $dt \otimes d\P$-a.e.
 $(t,\omega) \in [0,T) \times \Omega$
$$
g(t, \omega, y,z) =  k(y)\,|z|^{2}. $$
\end{Theorem}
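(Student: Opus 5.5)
We prove the two directions separately.

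\emph{Sufficiency.} Assume $g(t,y,z)=k(y)|z|^2$ with $k$ deterministic. Let $\Phi:\R\to\R$ solve the ODE $\Phi''=2k\Phi'$, i.e.
$$\Phi(x)=\int_0^x \exp\Big(2\int_0^u k(v)\,dv\Big)du,$$
which is strictly increasing and $C^2$. Applying It\^o's formula to $\Phi(Y_t)$, the quadratic terms cancel: $d\Phi(Y_t)=-\Phi'(Y_t)k(Y_t)|Z_t|^2dt+\Phi'(Y_t)Z_tdW_t+\tfrac12\Phi''(Y_t)|Z_t|^2dt=\Phi'(Y_t)Z_tdW_t$. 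Hence $\Phi(Y)$ is a local martingale. It is bounded since $Y\in\Sc^\infty$, so it is a true martingale. Therefore $\Ec^g_{0,T}[X]=\Phi^{-1}(\E[\Phi(X)])$, which depends only on the law of $X$. The regularity of $k$ must be compatible with (A1)–(A4); continuity of $k$, implied by (A1), suffices to make $\Phi\in C^2$.

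\emph{Necessity.} By Theorem~\ref{thm:const}, LI is equivalent to CLI, and since $g(\cdot,y,0)=0$ also to MLI/CLLI. We use MLI with small horizons to identify $g$ locally.

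Fix $t$, $y\in\R$, $z\in\R^d$, $h>0$, and take the terminal variable
$$X_h=y+z\cdot(W_{t+h}-W_t)\quad(\text{suitably truncated to be bounded}).$$
Its conditional law given $\Fc_t$ is deterministic and depends on $z$ only through $|z|$. A first-order expansion of the BSDE on $[t,t+h]$ gives
$$\Ec^g_{t,t+h}[X_h]=y+\int_t^{t+h}\E[g(s,y,z)\,|\,\Fc_t]\,ds+o(h).$$
To justify this, use the stability estimates for quadratic BSDEs (BMO bounds on $Z$ and $Z\approx z$ on short intervals), together with continuity of $g$ in $(y,z)$ uniformly in $(t,\omega)$ from (A1). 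Dividing by $h$ and letting $h\downarrow0$ at Lebesgue points shows that $g(t,\omega,y,z)$ is determined by the data $(y,|z|)$ and $t$. Concretely:

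(a) Replacing $z$ by $Oz$ with $O$ orthogonal leaves the law unchanged, so $g(t,\omega,y,z)=G(t,\omega,y,|z|)$.

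(b) CLI at time $t$ compares $X_h$ on $A\in\Fc_t$ with the same construction on a different set of equal conditional law. Using the conditional atomlessness results of Appendix B, we can transport a payoff across $\omega$'s. This should give that $G$ is $\omega$-independent. Alternatively, the conditional law of $X_h$ given $\Fc_t$ is deterministic while $\Ec^g_{t,t+h}[X_h]$ must be $\Fc_t$-measurable yet equal across all $\omega$; comparing with the $\Fc_0$-measurable copy built from an independent increment gives determinism.

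(c) To get $t$-independence and the $|z|^2$ form, use MLI. The variable $y+z\cdot(W_{t+h}-W_t)$ has the same law as $y+z\,\sqrt{h/h'}\cdot(W_{t+h'}-W_t)$ (rescaled), and also as increments over disjoint windows $[t',t'+h]$. Comparing the second-order expansions yields $h\,G(t,y,|z|)=h'\,G(t,y,|z|\sqrt{h/h'})$ up to $o(h)$. Equivalently, $G(t,y,\lambda r)=\lambda^2G(t,y,r)$, so $G=k(t,y)|z|^2$. Shifting the window in time shows that $k$ does not depend on $t$.

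For the Brownian scaling and time-shift invariance we invoke Appendix C. The main obstacle is the rigorous $o(h)$ expansion under only quadratic growth, because $Z$ is not a priori close to $z$ in a strong sense. I would first obtain it by linearizing: write $Y^h-\bar Y$ with $\bar Y_s=y+z\cdot(W_s-W_t)$, which solves a BSDE with generator $g(s,Y,Z)$. Then use the BMO a priori estimates to show $\E[\int_t^{t+h}|Z_s-z|^2ds\,|\,\Fc_t]=o(h)$ and apply (A3).
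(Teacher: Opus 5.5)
Your sufficiency argument is correct and self-contained: the transform $\Phi''=2k\Phi'$ turns $\Ec^g$ into a certainty equivalent, whereas the paper simply cites Xu--Xu--Zhou. Your rotation and scaling steps are also sound. Carried out with MLI at $\sigma=t$, they compare two $\Fc_t$-conditional values at the same time, so they give $g(t,\omega,y,\lambda Oz)=\lambda^2 g(t,\omega,y,z)$ even before determinism is known. This holds modulo the short-time expansion, which the paper imports as Theorem~\ref{thm:representation} with the stopping time $\tau^t_C$. The genuine gap is step (b), the $\omega$-independence of $g$. Your time-shift argument in (c) also depends on it.

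As written, (b) cannot work. CLI at time $t$ cannot compare $X_h\1_A$ with the same construction on a different set $B\in\Fc_t$. For $A,B\in\Fc_t$, the $\Fc_t$-conditional laws of $X_h\1_A$ and $X_h\1_B$ coincide only if $A=B$ a.s. (unless $X_h\equiv0$), so conditioning at time $t$ never moves mass across $\omega$'s. Lemma~\ref{lemma:condprob} does not help either: it yields sets in $\Fc_T$ with equal $\Fc_t$-conditional probability, not distinct sets in $\Fc_t$. Your ``alternative'' simply asserts the conclusion. Comparing with a copy $y+z\cdot W_h$ via MLI at $\sigma=0$ only gives the scalar identity $\Ec^g_{0,t}\big[\Ec^g_{t,t+h}[X_h]\big]=\Ec^g_{0,h}[y+z\cdot W_h]$, which does not force $\Ec^g_{t,t+h}[X_h]$ to be constant.

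The missing idea is to perform the swap with law-invariance at a time before $t$ (the paper uses time $0$), applied to localized payoffs.
\begin{itemize}
\item Set $Y:=\Ec^g_{t,t+h}[X_h]$. If $Y$ is not a.s.\ constant, atomlessness of $\Fc_t$ gives $m\in\R$ and $A,B\in\Fc_t$ with $\P(A)=\P(B)>0$, $Y>m$ on $A$, and $Y\le m$ on $B$.
\item Since $g(\cdot,0,0)=0$, the zero-one law and time-consistency (Proposition~\ref{prop:Eg}) give $\Ec^g_{0,t+h}[X_h\1_A]=\Ec^g_{0,t}[Y\1_A]$ and $\Ec^g_{0,t+h}[X_h\1_B]=\Ec^g_{0,t}[Y\1_B]$.
\item Strict monotonicity then gives $\Ec^g_{0,t}[Y\1_A]>\Ec^g_{0,t}[m\1_A]$ and $\Ec^g_{0,t}[Y\1_B]\le\Ec^g_{0,t}[m\1_B]$.
\item Because $X_h$ is independent of $\Fc_t$ and $\P(A)=\P(B)$, we have $X_h\1_A\stackrel{d}{=}X_h\1_B$ and $m\1_A\stackrel{d}{=}m\1_B$. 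LI (at maturity $t+h$, via Theorem~\ref{thm:const}) forces equality in both pairs, which is a contradiction.
\end{itemize}
Hence $Y$ is deterministic, and so is its limit $g(t,\cdot,y,z)$.

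Only once $Y$ is known to be a constant does constant preservation give $\Ec^g_{s,t}[Y]=Y$. This is what lets MLI at $\sigma=s$ (or $\sigma=0$) identify $\Ec^g_{t,t+h}[X_h]$ with the value on the window $[s,s+h]$, which yields $t$-independence. Without (b), your ``shift the window'' step would require pushing a random $\Fc_t$-measurable quantity through $\Ec^g_{s,t}$, and MLI alone gives no control over that.
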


\begin{proof} We start by the direct implication (the necessary condition). The proof is organized in three steps.\\

\emph{\underline{Step 1:} ($g$ is deterministic $dt \otimes d \P$-a.e.)} 
Let $C>0$ be a given constant and define the following stopping time
$$ \tau^{t}_{C} := \inf \left \{ s \in [0,T-t] , ~~ \lvert W_{t + s} - W_t \lvert > C \right \} \wedge (T-t).$$
According to Theorem \ref{thm:representation}, it suffices to show that for a given $t\in [0,T]$,  $\eps \in (0,T-t]$ and $(y,z) \in \R \times \R^d$, that \\$\Ec^g_{t,t+\eps  \wedge \tau^t_C}\left [ y + z(W_{t + \eps \wedge \tau^t_C} - W_t) \right]$ is deterministic. Suppose that \\$\Ec^g_{t,t+\eps  \wedge \tau^t_C}\left [ y + z(W_{t + \eps \wedge \tau^t_C} - W_t) \right]$ is not deterministic. Then, there exist $m \in \R$ and $A,B \in \Fc_t$ with $\P(A) = \P(B) >0$ (by the atomlessness of the probability space; see the unconditional version of Lemma \ref{lemma:condprob} in \cite{sierpinski1922fonctions}) such that $$\Ec^g_{t,t+\eps  \wedge \tau^t_C}\left [ y + z(W_{t + \eps \wedge \tau^t_C} - W_t) \right] - m >0,~ \text{ $\P$ a.s. on } A, $$ and 
$$ \Ec^g_{t,t+\eps  \wedge \tau^t_C}\left [ y + z(W_{t + \eps \wedge \tau^t_C} - W_t) \right] - m  \leq 0,~ \text{ $\P$ a.s. on } B,$$ which then yields, by the Zero-one Law property (since $g(t,\omega, 0, 0) = 0$) of the quadratic $g$-expectation 
\begin{equation} \label{inequality2}
\begin{aligned}
 \Ec^g_{t,t+\eps  \wedge \tau^t_C}\left [ \left(y + z(W_{t + \eps \wedge \tau^t_C} - W_t) \right) \1_A \right]  & >  m\1_A, ~~ \P\text{-a.s.}, \\
  \Ec^g_{t,t+\eps  \wedge \tau^t_C}\left [ \left(y + z(W_{t + \eps \wedge \tau^t_C} - W_t) \right) \1_B\right]  & \leq m\1_B, ~~ \P\text{-a.s.}.
 \end{aligned}
\end{equation}
Furthermore, by Time-Consistency and Strict-Monotonicity of the $g$-expectation, one can show that the two inequalities \eqref{inequality2} yield
\begin{equation} \label{inequality3}
\begin{aligned}
 \Ec^g_{0,t+\eps  \wedge \tau^t_C}\left [ \left(y + z(W_{t + \eps \wedge \tau^t_C} - W_t) \right) \1_A \right]  & >  \Ec^g_{0,t} \left[ m\1_A \right],\\
  \Ec^g_{0,t+\eps  \wedge \tau^t_C}\left [ \left(y + z(W_{t + \eps \wedge \tau^t_C} - W_t) \right) \1_B\right]  & \leq \Ec^g_{0,t} \left[ m\1_B \right].
 \end{aligned}
\end{equation}
Observing that $m \1_A \stackrel{d}{=} m \1_B$ and that $$\left(y + z(W_{t + \eps \wedge \tau^t_C} - W_t) \right) \1_A \stackrel{d}{=} \left(y + z(W_{t + \eps \wedge \tau^t_C} - W_t) \right) \1_B$$ since $\P(A) = \P(B)$ and $W_{t + \eps \wedge \tau^t_C} - W_t$ is independent of $\Fc_t$.
Indeed, the stopping time $\tau_C^t$ is measurable with respect to the shifted filtration $\Gc^t_u := \sigma \left(W_{t+s} - W_{t} ; 0 \leq s \leq u \right)$ with $0 \leq u \leq T-t$, which is independent of $\Fc_t$.
We have by law-invariance and Theorem \ref{thm:const} that 
$$ \Ec^g_{0,t+\eps  \wedge \tau^t_C}\left [ \left(y + z(W_{t + \eps \wedge \tau^t_C} - W_t) \right) \1_A \right] = \Ec^g_{0,t+\eps  \wedge \tau^t_C}\left [ \left(y + z(W_{t + \eps \wedge \tau^t_C} - W_t) \right) \1_B \right],$$
and
$$\Ec^g_{0,t} \left[ m\1_A \right]  = \Ec^g_{0,t} \left[ m\1_B\right],$$
which, by replacing in \eqref{inequality3} yields
two contradictory inequalities.\\ Hence, $\Ec^g_{t,t+\eps  \wedge \tau^t_C}\left [ y + z(W_{t + \eps \wedge \tau^t_C} - W_t) \right]$ is $\Fc_0$-measurable and verifies by Time-Consistency and the Constant-Preserving property (since $g(t,\omega, y, 0) = 0$) of the operator that
\begin{equation} \label{eq:tto0}
    \Ec^g_{t,t+\eps  \wedge \tau^t_C}\left [ y + z(W_{t + \eps \wedge \tau^t_C} - W_t) \right] =\Ec^g_{0,t+\eps  \wedge \tau^t_C}\left [ y + z(W_{t + \eps \wedge \tau^t_C} - W_t) \right],
\end{equation} 
Finally, we have shown that for each $(y,z) \in\R \times \R^d$, and almost every $t \in [0,T)$,
 $$ g(t, \cdot, y,  z) = \lim_{\eps \to 0^+} \frac{1}{\eps} \left( \Ec^g_{0,t+\eps  \wedge \tau^t_C}\left [ y + z(W_{t + \eps \wedge \tau^t_C} - W_t) \right]  - y \right) , ~~ \P\text{-a.s.},$$
where $\Ec^g_{0,t+\eps  \wedge \tau^t_C}\left [ y + z(W_{t + \eps \wedge \tau^t_C} - W_t) \right]$ is $\Fc_0$-measurable.
Thus, for each $(y,z) \in\R \times \R^d$, and almost every $t \in [0,T)$,
$$ g(t, \cdot , y, z) = \E\left [g(t, \cdot , y, z) \right], ~~ \P\text{-a.s.},$$
which is well-defined since $g$ is dominated by deterministic functions under assumption \textbf{(A2)}.
By continuity of $g$ in $(y,z)$ uniformly in $(t,\omega) \in [0,T]\times \Omega$, under \textbf{(A1)}, and the continuity of $(y,z) \mapsto  \E\left [ g(t, \cdot, y, z)\right] $ by domination of $g$ under \textbf{(A1)}–\textbf{(A2)}, we deduce for $dt \otimes d\P$-a.e.
 $(t,\omega) \in [0,T) \times \Omega$ and all $(y,z) \in \R \times \R^d$,
\begin{equation} \label{eq:ae}
 g(t,\omega, y, z) =  \E\left [g(t, \cdot , y, z) \right].
\end{equation}
We define $\bar{g}$ a deterministic function verifying for any $t \in [0,T]$, and any $(y,z) \in \R \times \R^d$
$$\bar{g}(t,y,z) := \E\left [g(t, \cdot , y, z) \right]. $$
It verifies by $\eqref{eq:ae}$ that $g = \bar{g}$ $dt\otimes d\P$-a.e. and in all $(y,z) \in \R \times \R^d$.
Furthermore, by the uniqueness of the BSDE \eqref{eq:BSDE} and Fubini's theorem, one can show that for any $\sigma$,
$\tau \in \Tc_{0,T}$ with $\sigma \le \tau$, $\P$-a.s., and any $X \in L^{\infty}\left( \Fc_{\tau} \right)$ :
$$ \Ec_{\sigma, \tau}^g \left[ X \right] =  \Ec_{\sigma, \tau}^{\bar{g}} \left[ X \right], ~~ \P\text{-a.s.}$$
We aim to show that there exists a deterministic function $k:\R\to \R$ such that for all $(y,z) \in \R\times \R^d$ and almost every $t \in [0,T)$ $$\bar{g}(t, y, z) = k(y) \lvert z \lvert^2.$$

\smallskip

\underline{\emph{Step 2:}} ($\bar g$ is time-invariant $a.e.$) 
Having shown that $g$ is deterministic, we now use the maturity 
law-invariance (Theorem~\ref{thm:const}) together with 
the Brownian scaling in Lemma~\ref{lemma: lawBM} to show that 
$\bar{g}$ does not depend on $t$. 

Let $(y,z) \in  \R \times \R^d$, we obtain using equation \eqref{eq:tto0}, the maturity law-invariance by Theorem \ref{thm:const}, and Lemma \ref{lemma: lawBM}, for any $0\leq s < t \leq T$ and $\eps \in (0,T-t]$
\begin{align*}
\Ec^{\bar g}_{t,t+\eps  \wedge \tau^t_C}\left [ y + z(W_{t + \eps \wedge \tau^t_C} - W_t) \right] &= \Ec_{0, t+\eps  \wedge \tau^t_C}^{\bar g}\left [ y + z(W_{t + \eps \wedge \tau^t_C} - W_t) \right] \\
 &= \Ec_{0,T}^{\bar g}\left [ y + z(W_{t + \eps \wedge \tau^t_C} - W_t) \right] \\
&=\Ec^{\bar g}_{0,T}\left [ y +  z \left(W_{s +  \eps \wedge \tau^s_{ C}} - W_s \right) \right]\\
&=\Ec^{\bar g}_{0, s + \eps \wedge \tau^s_{C}}\left [ y +  z \left(W_{ s + \eps \wedge \tau^s_{ C}} - W_s \right) \right]\\
&=\Ec^{\bar g}_{s,s + \eps \wedge \tau^s_{C}}\left [ y +  z \left(W_{ s + \eps \wedge \tau^s_{C}} - W_s \right) \right].
\end{align*}
Finally, we apply the representation Theorem \ref{thm:representation} and obtain that almost every $0 \leq s < t < T$
\begin{align*}
    \bar{g} (t,y,z) &= \lim_{\eps \to 0^+} \frac{1}{\eps} \left( \Ec^{\bar g}_{t,t+\eps  \wedge \tau^t_C}\left [ y + z(W_{t + \eps \wedge \tau^t_C} - W_t) \right]  - y \right)\\
    & = \lim_{\eps \to 0^+} \frac{1}{\eps} \left( \Ec^{\bar g}_{s,s+\eps  \wedge \tau^s_C}\left [ y + z(W_{s + \eps \wedge \tau^s_C} - W_s) \right]  - y \right) \\
    &= \bar{g} (s,y,z).
\end{align*}
Thus, for each $(y,z) \in\R \times \R^d$, almost every $s \in [0,T)$
$$\bar g(s,y,z)= \frac{1}{T} \int_{0}^T \bar g(u,y,z) du.$$
Similarly to \emph{Step 1}, we show that we can switch almost every $ s \in [0,T)$ with for any $(y,z) \in \R \times \R^d$.
We define a deterministic function $\bar{\bar g}: \R \times \R^d \to \R$ such that for any $(y,z) \in \R \times \R^d,$ 
$$ \bar{\bar g}(y,z):= \frac{1}{T} \int_{0}^T \bar g(u,y,z) du. $$
It verifies for the same reasons stated in \emph{Step 1} that for any $\sigma$,
$\tau \in \Tc_{0,T}$ with $\sigma \le \tau$, $\P$-a.s., and any $X \in L^{\infty}\left( \Fc_{\tau} \right)$ :
$$ \Ec_{\sigma, \tau}^g \left[ X \right] =  \Ec_{\sigma, \tau}^{\bar{\bar g}} \left[ X \right], ~~ \P\text{-a.s.}$$
\smallskip

\underline{\emph{Step 3:}} ($\bar{\bar g}$ is quadratic in $z$) 
It remains to identify the dependence of $\bar{\bar{g}}$ on $z$. 
The key observation is that law-invariance, combined with 
the Brownian scaling and rotational invariance 
(Lemma~\ref{lemma: lawBM}), forces the homogeneity relation 
$\bar{\bar{g}}(y,\lambda O z) = \lambda^2 
\bar{\bar{g}}(y,z)$ for all $\lambda \in (0,1)$ and 
orthogonal matrices $O$, which by 
Lemma~\ref{lemma:rotinvariance} implies the purely quadratic 
form.
  
Let $(y,z) \in  \R \times \R^d$, $\lambda \in (0,1)$ and $O$ an orthogonal matrix. For any $\eps \in (0,T)$, we obtain
using the law-invariance, Lemma \ref{lemma: lawBM}, and Theorem \ref{thm:const} 
$$\Ec^{\bar{\bar g}}_{0,\eps  \wedge \tau^0_C}\left [ y + \lambda OzW_{ \eps \wedge \tau^0_C} \right] = \Ec^{\bar{\bar g}}_{0, \lambda^2 \eps \wedge \tau^0_{\lambda C}}\left [ y +  z W_{ \lambda^2 \eps \wedge \tau^0_{\lambda C}} \right].$$
Finally, we apply the representation Theorem \ref{thm:representation} and obtain $\P$-a.s.
\begin{align*}
    \bar{\bar g}(y, \lambda Oz) &= \lim_{\eps \to 0^+} \frac{1}{\eps} \left( \Ec^{\bar{\bar g}}_{0,\eps  \wedge \tau^0_C}\left [ y + \lambda OzW_{ \eps \wedge \tau^0_C} \right]  - y \right) \\
     & = \lim_{\eps \to 0^+} \frac{1}{\eps} \left(\Ec^{\bar{\bar g}}_{0, \lambda^2 \eps \wedge \tau^0_{\lambda C}}\left [ y +  z W_{ \lambda^2 \eps \wedge \tau^0_{\lambda C}}  \right]  - y \right) \\
     & = \lambda^2 \lim_{\tilde{\eps} \to 0^+} \frac{1}{\tilde{\eps}} \left( \Ec^{\bar{\bar g}}_{0,\tilde{\eps} \wedge \tau^0_{\tilde{C}}}\left [ y +  zW_{\tilde{\eps} \wedge \tau^0_{\tilde{C}}}  \right]  - y \right) \\
    &= \lambda^2 \bar{\bar g}( y, z).
\end{align*}
In the third line, we did a change of variable on $\eps$ by considering $\widetilde \eps := \lambda^2 \eps$ and we set $\widetilde C := \lambda C$ as a new constant for the representation Theorem~\ref{thm:representation}.
From the result above, we conclude that the mapping $\bar{\bar g}(y,z) $ verifies the conditions of Lemma \ref{lemma:rotinvariance}. Thus, for all $(y,z) \in \R \times \R^d$, we have 
$$ \bar{\bar g}(y,z) = k(y) \lvert z \lvert^2, $$
where $k(y) := \bar{\bar g}(y, e_1) = \frac{1}{T}\int_0^T \E \left[g(t,\cdot,y,e_1)\right] dt$.
Finally, combining all the results above, we conclude that there exists a deterministic function $k$ such that for any $(y,z) \in \R \times \R^d$  and $dt \otimes d\P$-a.e.
 $(t,\omega) \in [0,T) \times \Omega$
$$
g(t, \omega, y,z) =  k(y)\,|z|^{2}. $$

For the converse implication, it is a direct result of Proposition 4.1 in \cite{XuXuZhou2022}.
\end{proof}

\begin{Remark}\label{rmk:assmain}
    In this proof, we only use assumptions \textbf{(A3)} and \textbf{(A4)} to guarantee the uniqueness, which in turn gives the strict-monotonicity, Zero-one Law, and Constant preserving properties in Proposition \ref{prop:Eg}. Hence, if a $g$-expectation satisfies said properties, and its generator satisfies \textbf{(A1)}-\textbf{(A2)}, then Theorem \ref{thm:main} remains valid. We note also that we need the assumptions \textbf{(A1)}-\textbf{(A2)} to use Theorem \ref{thm:representation} in the proof.
\end{Remark}
It is possible to construct generators that are LI or CLI without satisfying $g(t, \omega, y, 0) = 0$. Nevertheless, we show in the following corollary that it is not the case for the strongest form of law-invariance (MLI).

\begin{Corollary}\label{cor:mcli}
    Assume that $g$ satisfies \textbf{(A1)}–\textbf{(A4)}.
The BSDE \eqref{eq:BSDE}
is \emph{MLI} if and only if the generator $g$ is deterministic $dt \otimes d\P$-a.e and there exists a deterministic function $k:\R \to\R$ such that for all $(y,z) \in \R \times \R^d$ and $dt \otimes d\P$-a.e.
 $(t,\omega) \in [0,T) \times \Omega$
$$
g(t, \omega, y,z) =  k(y)\,|z|^{2}. $$
\end{Corollary}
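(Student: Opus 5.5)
The plan is to reduce the corollary to Theorem~\ref{thm:main} by showing that MLI forces the condition $g(t,\omega,y,0)=0$, $dt\otimes d\P$-a.e., for all $y\in\R$.

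\emph{Sufficiency.} If $g=k(y)|z|^2$, then $g(t,\omega,y,0)=0$. Theorem~\ref{thm:main} gives LI, and Theorem~\ref{thm:const}, whose extra hypothesis now holds, upgrades LI to MLI.

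\emph{Necessity, reduction step.} Assume MLI. By Remark~\ref{rmk:defs}, MLI implies the Constant-Preserving property: for every deterministic $t\le s\le T$ and every $y\in\R$ we have $\Ec^g_{t,s}[y]=y$ $\P$-a.s. Fix $y$ and $t$, and let $(Y,Z)$ solve BSDE$(s,y,g)$ on $[t,s]$. Constant preservation at every intermediate time $u\in[t,s]$ (apply it with the pair $(u,s)$) gives $Y_u=y$ for all $u$. The constant pair $(y,0)$ then has to solve the BSDE, and this pair solves it exactly when $\int_u^s g(r,y,0)\,dr=0$ for all $u$. By uniqueness of the solution in $\Sc^\infty\times\Hc^2$, the actual solution is $(Y,Z)$ with $Y\equiv y$ and $Z$ satisfying $\int_u^s g(r,y,Z_r)\,dr=\int_u^s Z_r\,dW_r$.

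To avoid the regularity of $Z$, I would argue directly with the representation Theorem~\ref{thm:representation} applied with $z=0$:
\[
g(t,\cdot,y,0)=\lim_{\eps\to0^+}\tfrac1\eps\bigl(\Ec^g_{t,t+\eps\wedge\tau^t_C}[y]-y\bigr).
\]
By constant preservation the right-hand side is $0$, since $\Ec^g_{t,t+\eps\wedge\tau^t_C}[y]=y$. This uses MLI with the stopping times $\sigma=t\le\tau=t+\eps\wedge\tau^t_C$, exactly as in Remark~\ref{rmk:defs}. Hence $g(t,\omega,y,0)=0$ for each $y$ and a.e.\ $t$, $\P$-a.s.

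\emph{Necessity, passing to all $y$.} The exceptional null set above depends on $y$. I would first take $y\in\mathbb{Q}$, so that countably many null sets are discarded. Then I would extend to all $y\in\R$ using the continuity of $g(t,\omega,\cdot,\cdot)$ uniformly in $(t,\omega)$ from \textbf{(A1)}, the same device used in Step~1 of the proof of Theorem~\ref{thm:main}.

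\emph{Conclusion.} With $g(t,\omega,y,0)=0$ established, MLI implies LI by Remark~\ref{rmk:defs}, and Theorem~\ref{thm:main} yields that $g$ is deterministic and of the form $k(y)|z|^2$.

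The main obstacle is the reduction step. One must check that Theorem~\ref{thm:representation} applies at $z=0$ with these stopping-time horizons, and that constant preservation holds with a stopping-time maturity rather than a deterministic one. MLI allows random $\tau$, so this should go through as in Remark~\ref{rmk:defs}.
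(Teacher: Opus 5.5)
Your proposal is correct and matches the paper's proof: MLI gives constant preservation (Remark~\ref{rmk:defs}), the representation Theorem~\ref{thm:representation} at $z=0$ then forces $g(t,\omega,y,0)=0$, and Theorem~\ref{thm:main} closes the necessity direction, while sufficiency follows from Theorem~\ref{thm:main} combined with Theorem~\ref{thm:const}. Your extra step, passing from rational $y$ to all $y$ via the uniform continuity in \textbf{(A1)}, makes explicit a point the paper leaves implicit. The tentative ``reduction step'' aside is not needed, although it could be completed directly: $Y\equiv y$ makes $\int Z\,dW$ a continuous martingale of finite variation, so $Z=0$, and then $\int_u^s g(r,y,0)\,dr=0$ for all $u\in[t,s]$.
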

\begin{proof}
    We show that the MLI property implies necessarily that $g(t, \omega, y, 0) = 0$ for $dt \otimes d\P$-a.e.
 $(t,\omega) \in [0,T) \times \Omega$ and for all $y \in \R$. As noted in Remark \ref{rmk:defs}, MLI implies the Constant-Preserving property. Thus, for all $y \in \R$ and for almost every $t \in [0,T)$
\begin{align*}
    g(t, y, 0) &= \lim_{\eps \to 0^+} \frac{1}{\eps}  \left( \Ec^{g}_{t, t+\eps  \wedge \tau^t_C}\left [y \right] - y \right) = 0, \quad \P\text{-a.s.}
\end{align*}
Hence, we obtain the desired result by applying Theorem \ref{thm:main}. Reciprocally, the sufficient condition is a by-product of Theorem \ref{thm:main}.
\end{proof}
Consequently, throughout the rest of the paper, we study the necessary conditions for law-invariance  (or equivalently conditional law-invariance), and the conditional local law-invariance only.

\subsubsection{Case $g(t, \omega, y, 0)$ deterministic}
Let $(Y,Z)$ be the unique solution of the BSDE \eqref{eq:BSDE}. The purpose of the following results is to investigate the necessary condition for law-invariance under weaker assumptions (see \textbf{(H)}). The main idea is to generalize the results of the previous regime by studying a new BSDE represented by $v(t,Y_t)$, and then to be able to recover the initial BSDE given by $Y_t$ (see Proposition \ref{prop:Koby-tildeg}). In order to construct $v\in C^{1,2}([0,T] \times \R)$, we first apply Itô's formula:
\begin{align}\label{Itoformula}
d v(t,Y_{t})&= \left[\partial_t v(t,Y_{t}) - \partial_y v(t,Y_{t})g(t,Y_{t},Z_{t})+\tfrac{1}{2}\partial_{yy}v(t,Y_{t})\lvert Z_{t}\lvert^{2}\right]dt\nonumber\\
&+\partial_y v(t,Y_{t})Z_{t} d W_{t}, 
\end{align}
where the drift is the point of interest.

One can observe the following derivation
\begin{equation}\label{decomp}
    g(t,\omega, y, z) = \underbrace{g(t,\omega, y, 0)}_{h(t, \omega, y)} + \underbrace{g(t,\omega, y, z) - g(t,\omega, y, 0)}_{\bar g (t,\omega, y,z)},
\end{equation} 
where $\bar g$ verifies the assumptions of the first case.
Plugging \eqref{decomp} into the Itô formula \eqref{Itoformula}, we decompose the new generator of the BSDE $(U_t:= v(t,Y_t), \tilde{Z}_t := \partial_y v(t, Y_t) Z_t)$ into two parts:
\begin{align*}
    \begin{cases}
        \tilde{h}(t, \omega, u) = \partial_t v(t,v^{-1}(t,u)) - \partial_y v(t,v^{-1}(t,u))h(t, \omega, v^{-1}(t,u)),  \\
        \tilde{g}(t, \omega, u ,\tilde z) = - \partial_y v(t,v^{-1}(t,u)) \bar g \left(t,\omega, v^{-1}(t,u),\frac{\tilde z}{\partial_y v(t, v^{-1}(t,u))}\right) + \frac{\partial_{yy}v(t,v^{-1}(t,u))}{2 \partial_y v(t, v^{-1}(t,u))^2}\lvert \tilde z\lvert^{2},\\
    \end{cases}
\end{align*}

where we want to enforce on $ \tilde{g}$ the assumptions of Theorem \ref{thm:main} while $\tilde{h}$ is equal to 0. This leads to the study of the following first-order linear PDE: 
\begin{equation} \label{eq:1stPDE}
\partial_t v(t,y) - \partial_y v(t,y)\,h(t,y) = 0.
\end{equation}
Furthermore, $v$ should be deterministic and invertible to transfer law-invariance back and forth.\\

We now introduce some assumptions on $h$.
\smallskip

\emph{Assumption} \textbf{(H):} $h(t,y):=g(t,\omega,y,0)$ is deterministic and satisfies

\smallskip
\noindent{\textbf{(H0)}} The function $h$ is of class 
$C^{0,2}([0,T]\times\R)$, i.e.\ $h$, $\partial_y h$, and  $\partial_{yy} h$ exist and are continuous on $[0,T] \times \R$.

\noindent{\textbf{(H1)}} There exists $H\in L^1([0,T])$ such that
$$
\big|\partial_y h(t,y)\big|\le H(t),\qquad \forall (t,y) \in [0,T] \times \R.
$$

\noindent{\textbf{(H2)}} $\partial_{yyy}h$ exists and there exists a constant $B>0$ such that 
$$
\big|\partial_{yy}h(t,y)\big|\le B,
\qquad
\big|\partial_{yyy}h(t,y)\big|\le B,\qquad \forall (t,y) \in [0,T] \times \R.
$$

The study of the PDE above is done later in the Appendix (Lemma \ref{Lemma:1stPDE}). We now introduce and study the generator $\Tilde g$ associated with the BSDE represented by $v(t, Y_t)$ and defined as follows 
$$
\Tilde g(t,u,\Tilde z)
:= -\,\partial_t v(t,y)+\partial_y v(t,y)\,g\Big(t,y,\frac{\Tilde z}{\partial_y v(t,y)}\Big)
-\frac12\,\partial_{yy}v(t,y)\,\Big|\frac{\Tilde z}{\partial_y v(t,y)}\Big|^2,
$$
with $y:=v^{-1}(t,u)$ and $\tilde{z}=\partial_y v(t,y) z$, where $v$ is the solution of PDE \eqref{eq:1stPDE}. We study its properties in Proposition \ref{prop:Koby-tildeg} and show that the two $g$-expectations are linked as follows 
$$
\Ec^{\Tilde g}_{\sigma,\tau}[X]\;=\; v\Big(\sigma,\ \Ec^{g}_{\sigma,\tau}\big[v^{-1}(\tau,X)\big]\Big), ~~ \P\text{-a.s.}
$$

Building on the previous results, we now give a necessary and sufficient condition for the law-invariance properties under the additional Assumption \textbf{(H)}.
\begin{Theorem} \label{thm:mainB}
    Assume that $g$ satisfies \textbf{(A1)}–\textbf{(A4)} and \textbf{(H)}. The following are equivalent:
    \begin{itemize}
        \item[i)] The BSDE \eqref{eq:BSDE} is LI;
        \item[ii)] The BSDE \eqref{eq:BSDE} is CLLI;
        \item[iii)] The generator $g$ is deterministic $dt\otimes d\P$-a.e. and has the following form for all $(y,z) \in \R \times \R^d$ and $dt \otimes d\P$-a.e. $(t,\omega) \in [0,T) \times \Omega$  $$ g(t, \omega, y, z) = h(t,y) + f(t,y) \lvert z \lvert^2,$$
    with $h$ in $C^{0,2}\left([0,T] \times \R \right)$ and $f$ is a $C^{1,1}\left([0,T] \times \R \right)$ solution of the following PDE
    \begin{equation}  \label{eq:PDEf}
     \partial_t f(t,y) - h(t,y) \partial_y f(t,y) - \partial_y h(t,y) f(t,y) = \tfrac12 \partial_{yy}h(t,y), \quad (t,y) \in [0,T] \times \R.
     \end{equation}
    \end{itemize} 
\end{Theorem}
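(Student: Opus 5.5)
The plan is to reduce to the first regime through the deterministic change of variables $U_t=v(t,Y_t)$, where $v$ solves the transport PDE \eqref{eq:1stPDE}. Lemma~\ref{Lemma:1stPDE} supplies such a $v\in C^{1,2}$, strictly increasing in $y$, with a deterministic inverse $v^{-1}(t,\cdot)$. Proposition~\ref{prop:Koby-tildeg} then gives that $\tilde g$ satisfies \textbf{(A1)}–\textbf{(A4)}, that $\tilde g(t,\omega,u,0)=0$ because $\tilde h\equiv 0$ by \eqref{eq:1stPDE}, and that
$$\Ec^{\tilde g}_{\sigma,\tau}[X]=v\big(\sigma,\Ec^g_{\sigma,\tau}[v^{-1}(\tau,X)]\big).$$
The strategy is to transfer law-invariance between $g$ and $\tilde g$, apply Theorem~\ref{thm:main} to $\tilde g$, and read off the form of $g$.

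\emph{Step 1 (i $\Rightarrow$ iii).} Let $X\stackrel{d}{=}X'$ in $L^\infty(\Fc_T)$. Since $v^{-1}(T,\cdot)$ is a deterministic bijection, $v^{-1}(T,X)\stackrel{d}{=}v^{-1}(T,X')$. LI of $g$ together with the identity above at $\sigma=0,\tau=T$ (where $v(0,\cdot)$ is deterministic) shows that $\tilde g$ is LI. By Theorem~\ref{thm:main}, $\tilde g$ is deterministic and $\tilde g(t,u,\tilde z)=k(u)|\tilde z|^2$. Now invert the definition of $\tilde g$, using $\partial_t v=\partial_y v\,h$ from \eqref{eq:1stPDE}:
$$g(t,y,z)=h(t,y)+\Big(k(v(t,y))\,\partial_y v(t,y)+\frac{\partial_{yy}v(t,y)}{2\,\partial_y v(t,y)}\Big)|z|^2=:h(t,y)+f(t,y)|z|^2 .$$
In particular $g$ is deterministic. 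To get the PDE \eqref{eq:PDEf}, compute $\partial_t f$ and $\partial_y f$, and eliminate $\partial_t v$, $\partial_{ty}v$ and $\partial_{tyy}v$ via \eqref{eq:1stPDE} and its $y$-derivatives:
$$\partial_{ty}v=\partial_{yy}v\,h+\partial_y v\,\partial_y h,\qquad \partial_{tyy}v=\partial_{yyy}v\,h+2\partial_{yy}v\,\partial_y h+\partial_y v\,\partial_{yy}h .$$
The terms containing $k'$, $k$ and $\partial_{yyy}v$ should cancel against $h\partial_y f$, leaving
$$\partial_t f-h\,\partial_y f-\partial_y h\, f=\tfrac12\partial_{yy}h .$$
This needs $v\in C^{1,3}$ with a mixed derivative up to $\partial_{tyy}$, which is where \textbf{(H2)} enters through Lemma~\ref{Lemma:1stPDE}. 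The regularity $f\in C^{1,1}$ also requires $k\in C^1$. I expect this to be the main obstacle: Theorem~\ref{thm:main} only yields some function $k$. My first attempt would be to read $k$ off from $g$ directly, $k(v(t,y))=\big(f(t,y)-\partial_{yy}v/(2\partial_y v)\big)/\partial_y v$, evaluated at $t=0$ (using continuity of $g$ from \textbf{(A1)}). The remaining difficulty is the mismatch in direction: this relation shows that $k$ is as smooth as $f$, whereas we want to deduce the smoothness of $f$ from that of $k$. If this circularity cannot be broken, I would instead derive the PDE in the weak (distributional) sense and bootstrap regularity from the transport structure, since $f$ is constant along characteristics up to smooth source terms.

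\emph{Step 2 (iii $\Rightarrow$ ii).} Given iii, define $k$ by the inverse formula above. The PDE computation of Step 1 run backwards shows that $k\circ v$ does not depend on $t$ along characteristics, so $\tilde g=k(u)|\tilde z|^2$ is deterministic. Theorem~\ref{thm:main} and Theorem~\ref{thm:const} then give that $\tilde g$ is CLLI. Transfer back through $\Ec^{g}_{\sigma,T'}[X]=v^{-1}\big(\sigma,\Ec^{\tilde g}_{\sigma,T'}[v(T',X)]\big)$: equality of conditional laws is preserved under the deterministic map $v(T',\cdot)$, and applying $v^{-1}(\sigma,\cdot)$ preserves almost sure equality, so $g$ is CLLI.

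\emph{Step 3 (ii $\Rightarrow$ i).} Take $T'=T$ and $\sigma=0$ in the definition of CLLI; this is exactly LI.
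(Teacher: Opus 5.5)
Your proposal follows the paper's proof. Both arguments transfer LI to $\tilde g$ through the characteristic map $v$, apply Theorem~\ref{thm:main}, and invert to get $f=k(v)\,\partial_y v+\partial_{yy}v/(2\,\partial_y v)$. Both then derive \eqref{eq:PDEf} by eliminating $\partial_{ty}v$ and $\partial_{tyy}v$ with the differentiated transport equation, and handle the converse and CLLI through Theorem~\ref{thm:const} for $\tilde g$ followed by transfer back. The paper organises the PDE computation through $K=k\circ v$ and $\partial_t K-h\,\partial_y K=0$ instead of differentiating $f$ directly. The two computations are equivalent and your identity is correct; note that the $k$-terms do not cancel but produce the $k(v)\,\partial_y v\,\partial_y h$ part of $\partial_y h\,f$. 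One small correction: Proposition~\ref{prop:Koby-tildeg} gives only \textbf{(A1)}--\textbf{(A3)} for $\tilde g$. Uniqueness is transferred from $g$, so Theorems~\ref{thm:main} and~\ref{thm:const} are applied to $\tilde g$ in the sense of Remark~\ref{rmk:assmain}.

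The regularity issue you flag is genuine, and the paper does not resolve it. It differentiates $k\circ v$ without comment, then asserts $f\in C^{1,1}$ from the PDE and \textbf{(H0)}. That only yields continuity of $\partial_t f$ once $\partial_y f$ is already known to be continuous.

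Differentiability of $k$ can be obtained without circularity. \textbf{(A4)} presupposes that $\partial_y g$ exists, so for a.e.\ $(t_0,\omega)$ the map $y\mapsto g(t_0,\omega,y,e_1)-h(t_0,y)=f(t_0,y)$ is differentiable. Hence $k(u)=K\big(t_0,v^{-1}(t_0,u)\big)$, with $K=\big(f-\partial_{yy}v/(2\,\partial_y v)\big)/\partial_y v$, is differentiable, because $\partial_{yyy}v$ exists by Lemma~\ref{Lemma:1stPDE}. This is all your pointwise computation needs.

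Continuity of $\partial_y f$, however, cannot be derived. Your weak-formulation fallback will not produce it either: \eqref{eq:PDEf} is a linear transport equation, so it only carries the regularity of $f(0,\cdot)=k$ along smooth characteristics. For instance, take $h\equiv0$ and $g(t,y,z)=k(y)|z|^2$ with $k(y)=-\int_0^y\big(2+\sin(1/s)\big)\,ds$. This generator satisfies \textbf{(A1)}--\textbf{(A4)} and \textbf{(H)} and is LI, yet $f=k$ has $k'$ discontinuous at $0$.

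So the $C^{1,1}$ part of iii) needs a stronger hypothesis, for example continuity of $\partial_y g$ as in \textbf{(A4$^*$)}. Under \textbf{(A4)} alone you get a pointwise solution of \eqref{eq:PDEf} whose partial derivatives exist but may be discontinuous.
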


\begin{proof}[Proof of Theorem~\ref{thm:mainB}] 
\smallskip
\emph{i) $\Rightarrow$ iii)}. The proof is organized in two steps:
  
\emph{\underline{Step 1:}} ($g$ is deterministic)
Set $h(t,y):=g(t,y,0)$, deterministic by assumption, and $v$ as the $C^{1,2}$ solution of PDE \eqref{eq:1stPDE}.
From the change of variable in Proposition \ref{prop:Koby-tildeg},
$$
\Tilde g(t,u,\Tilde z)= \partial_y v(t,y)\Big(g(t,y,z)-h(t,y)\Big)-\frac12\,\frac{\partial_{yy}v(t,y)}{\partial_{y}v(t,y)^2}\,| \Tilde z|^2,
\qquad y=v^{-1}(t,u),\ 
$$ and $\tilde{z}=\partial_y v(t,y) z$
which implies $\Tilde g(t,u,0)=0$. By law-invariance of $\Ec^g$, $\Ec^{\Tilde g}$ is law-invariant. Hence, Theorem~\ref{thm:main} gives a deterministic function $k:\R\to\R$ such that for all $(y,z) \in \R \times \R^d$ and $dt \otimes d\P$-a.e. $(t,\omega) \in [0,T) \times \Omega$
$$
\Tilde g(t, \omega, u,\Tilde z)=k(u)\,|\Tilde z|^2.
$$
Substituting $\Tilde z=\partial_y v(t,y) z$ and solving for $g$,

\begin{align}\label{eqq}
g(t,y,z)=h(t,y)+ \left( k(v(t,y)) \partial_yv(t,y) +\frac12\,\frac{\partial_{yy}v(t,y)}{\partial_y v(t,y)} \right)|z|^2,
\end{align}
implying that $g$ is deterministic.

\emph{\underline{Step 2:}} (PDE satisfied by $f$) 
We now construct $f$ and show that it must solve the 
PDE~\eqref{eq:PDEf}. The argument proceeds by 
introducing the auxiliary function 
\begin{align}\label{eqqq}
K(t,y) := k(v(t,y)),
\end{align}
which solves its own first-order 
PDE~\eqref{eq:KPDE} by the change of variable. We then 
differentiate this PDE and show that $K$ solves 
\eqref{eq:KPDE} if and only if $f$ solves 
\eqref{eq:PDEf}. Define
\begin{align}\label{eqqqq}
f(t,y):=k\!\big(v(t,y)\big)\,\partial_y v(t,y)
+\frac12\,\frac{\partial_{yy}v(t,y)}{\partial_y v(t,y)}.
\end{align}
By \eqref{eqq}, we obtain the claimed form
$$
g(t,y,z)=h(t,y)+f(t,y)\,|z|^2,\qquad dt\otimes d\P\text{-a.e.}
$$
Observe that $h$ is $C^{0,2}$ by assumption \textbf{(H)}. 
First, by using the definition of the function $K$ (see \eqref{eqqq}), the PDE \eqref{eq:1stPDE} and differentiating the composition of $k$ and $v$, we show that $K$ is a solution of the following First-order linear PDE
\begin{equation} \label{eq:KPDE}
 \partial_t K(t,y) - \partial_y K(t,y) h(t,y) = 0, ~~ K(0,y) = k(y).
 \end{equation}
Then, from \eqref{eqqqq}, we deduce
$$K(t,y)=\Frac{1}{\partial_y v(t,y)} \left[ f(t,y) - \Frac12 \Frac{\partial_{yy} v(t,y)}{\partial_y v(t,y)} \right].$$
Computing the partial derivatives from the above expression of $K$ yields (we omit the arguments $(t,y)$ for convenience)
\begin{align*}
    \partial_t K &= \frac{-\partial_{ty}v}{\partial_y v^2}\left[ f - \tfrac12 \frac{\partial_{yy} v}{\partial_{y} v} \right] + \frac{1}{\partial_y v} \left[ \partial_t f - \tfrac12 \frac{\partial_{tyy} v \partial_{y}v- \partial_{ty} v \partial_{yy}v}{\partial_y v^2}\right]\\
    \partial_y K   &=  \frac{-\partial_{yy}v}{\partial_y v^2}\left[ f - \tfrac12 \frac{\partial_{yy} v}{\partial_{y} v} \right] + \frac{1}{\partial_y v} \left[ \partial_y f - \tfrac12 \frac{\partial_{yyy} v \partial_y v - (\partial_{yy} v)^2 }{\partial_y v^2}\right].
\end{align*}
Hence, 
\begin{equation} \label{eq:K}
\begin{aligned}
     \partial_t K - \partial_y K \, h &= \frac{-1}{\partial_y v^2} \left[ f - \tfrac12 \frac{\partial_{yy} v}{\partial_{y} v} \right] \left( \partial_{ty}v -\partial_{yy}v h \right) + \frac{1}{\partial_y v} \left[ \partial_t f -  \partial_y f h \right] \\
     & - \frac{1}{2 \partial_y v^3} \left[ \partial_{tyy} v \partial_{y}v- \partial_{ty} v \partial_{yy}v -   \partial_{yyy} v \partial_y v h + (\partial_{yy} v)^2 h \right].
\end{aligned}
\end{equation}
In the following, we derive some equations that we will use to simplify the computations above. We proceed by differentiating the PDE \eqref{eq:1stPDE} twice w.r.t. $y$. First, since $\partial_{yt} v = \partial_{ty} v$, we obtain the following equation
\begin{equation}\label{eq:helppde0}
    \partial_{ty} v  -  \partial_y v  \, \partial_y h  - \partial_{yy} v \, h = 0,
\end{equation}
and by multiplying all the terms by $\partial_{yy} v$ yields
\begin{equation}\label{eq:helppde}
    \partial_{ty} v \, \partial_{yy} v -  \partial_y v \, \partial_{yy} v \, \partial_y h  - (\partial_{yy} v)^2 \, h = 0.
\end{equation} 
Secondly, since $\partial_{yty} v = \partial_{tyy} v$, we obtain the following equation
\begin{equation} \label{eq:helppde2}
    \partial_{tyy} v  - \partial_{y} v \, \partial_{yy} \,h - 2 \partial_{yy} v \,\partial_y h  - \partial_{yyy} v\, h= 0.
\end{equation} 
Now, we carefully rearrange the terms in \eqref{eq:K}. Using \eqref{eq:helppde0} and by multiplying by $\partial_y v^2$, we obtain
\begin{align*}
     \left(\partial_t K - \partial_y K \, h \right) \partial_y v^2  &= \left[ f - \tfrac12 \frac{\partial_{yy} v}{\partial_{y} v} \right] \left( - \partial_y v \partial_y h \right) +  \partial_y v [\partial_t f -  \partial_y f h] \\
     & - \tfrac12 \left[ \partial_{tyy} v - \partial_{yyy} v  h\right]  - \frac{1}{2 \partial_y v} \left[- \partial_{ty} v \partial_{yy}v  + (\partial_{yy} v)^2 h \right] \\ 
     &= \left[ \partial_t f - h \partial_y f - \partial_y h f \right] \partial_y v
      - \tfrac12 \left[ \partial_{tyy} v - \partial_{yy}v \partial_y h- \partial_{yyy} v  h\right]  \\ & - \frac{1}{2 \partial_y v} \left[- \partial_{ty} v \partial_{yy}v  + (\partial_{yy} v)^2 h \right].
\end{align*}
Substituting the identities \eqref{eq:helppde} and 
\eqref{eq:helppde2} into the expression above eliminates 
all terms involving mixed derivatives of $v$, leaving only 
derivatives of $f$ and $h$. Therefore, using \eqref{eq:helppde} and \eqref{eq:helppde2}, we have
\begin{align*}
     \left(\partial_t K - \partial_y K \, h \right) \partial_y v^2  &= \left[ \partial_t f - h \partial_y f - \partial_y h f \right] \partial_y v
      - \tfrac12 \left[  \partial_y v \partial_{yy} h + \partial_{yy}v \partial_y h \right]  \\ & +\frac{1}{2}  \partial_{yy} v \partial_{y} h \\
      &= \left[ \partial_t f - h \partial_y f - \partial_y h f - \tfrac12 \partial_{yy} h\right] \partial_y v.
\end{align*}
Finally, recalling that $K$ is a solution of the PDE \eqref{eq:KPDE}, we derive that $f$ must satisfy for all $(t,y) \in [0,T] \times \R$ 
\begin{equation} \label{eq:xyz}
\partial_t f(t,y) - \partial_y f(t,y)h(t,y) - f(t,y)\partial_y h(t,y) = \tfrac12 \partial_{yy}h(t,y).
\end{equation}
From the equation above and the assumption \textbf{(H0)}, we deduce that $f$ is in $C^{1,1}([0,T] \times \R)$. More importantly, the computations above show that $f$ is a solution of the PDE \eqref{eq:xyz} if and only if $K$ is a solution of the PDE \eqref{eq:KPDE}. This observation is useful for the next part of the proof.
\smallskip

\emph{iii) $\Rightarrow$ i).} If the generator $g$ is deterministic and is of the form 
for all $(y,z) \in \R \times \R^d$ and $dt \otimes d\P$-a.e. $(t,\omega) \in [0,T) \times \Omega$  $$ g(t, \omega, y, z) = h(t,y) + f(t,y) \lvert z \lvert^2.$$
Then, using the change of variable in Proposition \ref{prop:Koby-tildeg}, yields
that 
\begin{align*}
    \tilde{g}(t, u, \tilde{z}) &= \left ( \frac{f(t, v^{-1}(t,u))}{\partial_{y}v(t,v^{-1}(t,u))} - \tfrac12 \frac{\partial_{yy}v(t,v^{-1}(t,u))}{\partial_{y}v(t,v^{-1}(t,u))^2} \right) \lvert \Tilde z \lvert^2\\
    &= K\left(t, v^{-1}(t,u) \right) \lvert \tilde{z} \lvert^2.
\end{align*}
We aim to show that $t \mapsto K(t, v^{-1}(t,\cdot))$ is constant in order to apply Theorem \ref{thm:main}.
By the chain rule, we have for all $(t,u) \in [0,T] \times \R$
$$\frac{d}{d t}  K(t, v^{-1}(t,u)) = \partial_t K(t, v^{-1}(t, u)) + \partial_t v^{-1}(t, u)\partial_y K(t, v^{-1}(t, u)). $$
Moreover, differentiating $v(t, v^{-1}(t,u))$ w.r.t $t$ yields
$$ \partial_t v(t, v^{-1}(t,u)) + \partial_t v^{-1}(t,u) \partial_y v(t, v^{-1}(t,u)) = 0. $$
Using the PDE \eqref{eq:1stPDE}, we obtain
$$\frac{d}{d t}  K(t, v^{-1}(t,u)) = \partial_t K(t, v^{-1}(t, u)) - h(t, v^{-1}(t,u)) \partial_y K(t, v^{-1}(t, u)). $$
From the computations in \emph{i)} $\Rightarrow$ \emph{iii)}, we showed that 
$$
     \left(\partial_t K - \partial_y K \, h \right) \partial_y v =  \partial_t f - h \partial_y f - \partial_y h f - \tfrac12 \partial_{yy} h.
$$
By the PDE condition in ii) on $f$, the RHS is 0. Hence, it implies that for all $(t,u) \in [0,T] \times \R$
$$ \frac{d}{d t}  K(t, v^{-1}(t,u)) = 0.$$
Consequently, by Theorem~\ref{thm:main}, the BSDE associated with $\Tilde g$ is
LI. Since the deterministic $C^{1,2}$ change of
variable induced by $v$
preserves laws, the BSDE associated with $g$ is also LI. The equivalence between i) and ii) is a consequence of Theorem \ref{thm:const} and Proposition \ref{prop:Koby-tildeg} since for all $T^{\prime} \in [0,T]$ and $\sigma \in \Tc_{0,T}$ we have
$$
\Ec^{\Tilde g}_{\sigma,T^{\prime}}[X]\;=\; v\Big(\sigma,\ \Ec^{g}_{\sigma,T^{\prime}}\big[v^{-1}(T^{\prime},X)\big]\Big), ~~ \P\text{-a.s.}
$$
The
proof is complete.

\end{proof}
\begin{Remark}\label{rmk:assumptions}
The regularity imposed by Assumption~\textbf{(H)} could likely be relaxed. For instance, the boundedness of $\partial_{yyy}h$ in \textbf{(H2)} is used only to control the third-order variational ODE in Lemma~\ref{Lemma:1stPDE}, and weaker integrability conditions may suffice. However, our primary goal is to identify the \emph{structural} form of law-invariant generators, not to pursue minimal regularity; the assumptions in \textbf{(H)} 
are chosen to ensure that the change of variable $v \in C^{1,2}$ is a diffeomorphism, allowing us to transfer law-invariance between the original and transformed BSDEs.

Similarly, for generators of the form $g(t,y,z) = h(t,y) + f(t,y)|z|^2$, the one-sided 
bound in Assumption~\textbf{(A4)} forces $\partial_y f \leq 0$, restricting $f$ to be 
non-increasing in~$y$. We believe that our structural characterization should remain valid beyond this regime, but establishing this under weaker assumptions on $\partial_y g$ would require new well-posedness and comparison results for quadratic BSDEs that go beyond the scope of the present work.
\end{Remark}
\subsubsection{Case $g(t, \omega, y, 0)$ non-deterministic}

In this part, we investigate law-invariance when the generator does not satisfy \emph{Assumption} \textbf{(H)}. Before delving into the characterization of non-deterministic law-invariant generators, we first give an example proving that there exists at least one class of non-deterministic generators satisfying assumptions \textbf{(A1)}–\textbf{(A4)} such that the BSDE \eqref{eq:BSDE} is law-invariant. Indeed, consider any arbitrary non-deterministic essentially bounded process $h$ such that for $t \in [0,T]$, and $(y,z) \in \R \times \R^d$,
$$ g(t, \omega, y, z):= h_t(\omega).$$
Then, for all $\xi \in L^{\infty} (\Fc_{\tau})$, we have
$$ \Ec_{0,T}^g \left [\xi \right] = \E\left [\xi \right] +  \E \left[\int_{0}^{T} h_s  ds \right].$$
Thus, the BSDE \eqref{eq:BSDE} is law-invariant. 

\smallskip
As in the previous case, one may attempt to remove the random drift $g(t,\omega, y, 0)$ by a change of variable through a random field $v$ solving a first order linear PDE with random coefficients, cf.\ \eqref{eq:1stPDE}. 
This yields an equivalence of the form
$$ X\mapsto \Ec^g_{\sigma,\tau}[X]\ \text{ is LI}\ \Longleftrightarrow\ X\mapsto \Ec^{\Tilde g}_{\sigma,\tau}[v(\tau,\cdot,X)]\ \text{ is LI}, $$
where $\Tilde g(t,\omega,y,0)=0$. 
However, in contrast with the deterministic case (where $v$ is a deterministic $C^1$-diffeomorphism in $y$), the randomness of $v$ prevents one from deducing law-invariance of the \emph{untransformed} operator $\Ec^{g}$  from that of the transformed one $\Ec^{\Tilde g}$. We illustrate this by constructing, via the Itô–Wentzell formula\footnote{In this specific example, the multidimensional version of Itô's formula suffices. However, this mechanism can be generalized to more general regular random fields.} an explicit non-deterministic law-invariant generator that is not of the form given by Theorem \ref{thm:mainB}.

\begin{Example}[A non-deterministic LI generator with a linear term]\label{ex:random_LI0_linear_term}
Let $g(t,\omega, y, z)=\frac12|z|^2$ and consider the solution $(Y,Z)$ of the BSDE associated with the generator $g$. Fix $r\in C^1([0,T])$ with $r(0)=r(T)=0$
and $\psi\in C_b^2(\R)$. Define for all $(t,\omega, y) \in [0,T] \times \Omega \times \R$
$$ v(t,\omega,y):=y-r(t)\psi\big(W_t^1(\omega)\big), $$
where $W^1$ denotes the first coordinate of $W$. We now construct a law-invariant BSDE by applying Itô–Wentzell's formula to $v(t,Y_t)$
$$ dv(t,Y_t) = -\big(\underbrace{r^{\prime}(t) \psi\big(W_t^1\big)  + \frac12 r(t)\psi^{\prime \prime} \big(W_t^1 \big)}_{a_t} + \frac12|Z_t|^2 \big)dt + \big(Z_t - \underbrace{r(t)\psi^{\prime} \big(W_t^1 \big) e_1}_{b_t} \big) dW_t.$$
Hence, $(\tilde{Y}_t:= v(t,Y_t), \tilde{Z}_t:= Z_t - b_t)$ is the solution of the BSDE associated with the following generator
$$
\tilde g(t,\omega, y, z) =  a_t(\omega)+\frac12|b_t(\omega)|^2 + b_t(\omega) z + \frac12|z|^2.
$$
In particular, $\Tilde g$ is non-deterministic and not of the form $h(t,y)+f(t,y)|z|^2$ due to the linear term
$ b_t z$, while it is
law-invariant by construction (since $r(0) = r(T) = 0$):

$$ \Tilde Y_0 = v \left(0, \Ec^g_{0,T} \left[v^{-1}(T,X) \right] \right) = \Ec^g_{0,T} \left[X \right], ~~ \forall X \in L^{\infty}(\Fc_T).$$
\end{Example}
Thus, it is clear for us that we should extract the necessary conditions using other means than the invariance properties of the Brownian increments. In the following, we use the differentiability representation in Theorem \ref{thm:diff} to do so. 

\smallskip

Let us now provide an alternative necessary condition on the generator for law-invariance and conditional local law-invariance. We denote by $\Ec$ the Dol\'eans-Dade exponential expressed as follows
$$
\mathcal E\Big(\int_t^{T^{\prime}}\theta_r\,dW_r\Big)_s
=\exp\Big(\int_t^s \theta_r\,dW_r-\frac12\int_t^s|\theta_r|^2\,dr\Big),
\qquad s\in[t,T^{\prime}],
$$
where $T^{\prime} \in [0,T]$, $\theta\in\Hc^2_{[0,T^{\prime}]}$, and $t \in [0,T^{\prime}]$.
\begin{Theorem} \label{thm:general}
    Assume \textbf{(A1)}-\textbf{(A4$^*$)} hold. If the BSDE \eqref{eq:BSDE} is LI, then the generator $g$ must satisfy the following identity:
    for all $(t, y) \in [0,T] \times \R,$ $\P$-a.s.
\begin{equation}\label{eq:cons1}
    \Ec \left( \int_{t}^T \partial_z g(s, Y^y_s, Z^y_s) dW_s \right)_T = \Frac{\exp\left( -\int_{t}^T \partial_y g(s, Y^y_s, Z^y_s) ds \right)}{\E \left[\exp\left( -\int_{t}^T \partial_y g(s, Y^y_s, Z^y_s) ds \right) \Big | \Fc_{t} \right]} ,
\end{equation}
    where $(Y^y, Z^y)$ is the solution of BSDE$(T, y, g)$.

    Furthermore, if the BSDE \eqref{eq:BSDE} is CLLI, then the generator $g$ must satisfy the following identity: for all $(T^{\prime}, y) \in [0,T] \times \R,$ 
    \begin{equation}\label{eq:cons2}
    \Ec \left( \int_{t}^
    {T^{\prime}}\partial_z g(s, Y^{y,T^{\prime}}_s, Z^{y,T^{\prime}}_s) dW_s \right)_{T^{\prime}} = \Frac{\exp\left( -\int_{t}^{T^{\prime}} \partial_y g(s, Y^{y,T^{\prime}}_s, Z^{y,T^{\prime}}_s) ds \right)}{\E \left[\exp\left( -\int_{t}^{T^{\prime}} \partial_y g(s, Y^{y,T^{\prime}}_s, Z^{y,T^{\prime}}_s) ds \right) \Big | \Fc_{t} \right]} ,
    \end{equation}
    where $(Y^{y,T^{\prime}}, Z^{y,T^{\prime}})$ is the solution of BSDE$(T^{\prime}, y, g)$.
\end{Theorem}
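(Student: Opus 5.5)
Fix $t\in[0,T]$ and $y\in\R$, and let $(Y^y,Z^y)$ solve BSDE$(T,y,g)$. The plan is to differentiate the LI identity in the direction of a perturbation whose law is preserved. The Gâteaux derivative of $X\mapsto \Ec^g_{t,T}[X]$ at $X=y$ in a direction $\xi\in L^\infty(\Fc_T)$ comes from Theorem~\ref{thm:diff}. It is the solution of the linearized BSDE
\begin{equation*}
\nabla Y_s=\xi+\int_s^T\big(\partial_y g(r,Y^y_r,Z^y_r)\nabla Y_r+\partial_z g(r,Y^y_r,Z^y_r)\nabla Z_r\big)dr-\int_s^T\nabla Z_r\,dW_r .
\end{equation*}
Write $\beta_r:=\partial_y g(r,Y^y_r,Z^y_r)$ and $\gamma_r:=\partial_z g(r,Y^y_r,Z^y_r)$. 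Under \textbf{(A3)} $\gamma$ is BMO, so $\Gamma_s:=\Ec(\int_t^\cdot\gamma\,dW)_s$ is a true martingale. Under \textbf{(A4$^*$)} the factor $e^{\int_t^s\beta}$ is controlled through the BMO energy estimates. A standard linearization then gives the explicit representation
\begin{equation*}
\nabla Y_t=\E\Big[\Gamma_T\,e^{\int_t^T\beta_r dr}\,\xi\,\Big|\,\Fc_t\Big]=:\E[D_T\,\xi\,|\,\Fc_t].
\end{equation*}

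By Theorem~\ref{thm:const}, LI is equivalent to CLI. Fix a direction $\xi$ and a measure-preserving transformation conditionally on $\Fc_t$. Concretely, take $\xi'$ with $Law((\xi)|\Fc_t)=Law(\xi'|\Fc_t)$. Then $y+\epsilon\xi$ and $y+\epsilon\xi'$ have the same conditional law for every $\epsilon$, so $\Ec^g_{t,T}[y+\epsilon\xi]=\Ec^g_{t,T}[y+\epsilon\xi']$. Dividing by $\epsilon$ and letting $\epsilon\to0$ yields
\[
\E[D_T\xi|\Fc_t]=\E[D_T\xi'|\Fc_t]
\]
for all such pairs. The point is that the linear functional $\xi\mapsto\E[D_T\xi|\Fc_t]$ is conditionally law-invariant. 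Since $\Fc_T$ is conditionally atomless given $\Fc_t$ (Lemma~\ref{lemma:condprob}), I will show that the density $D_T$ must be $\Fc_t$-measurable. Suppose it is not. Then there are sets $A,B\in\Fc_T$ with the same conditional probability given $\Fc_t$, on which $D_T$ is respectively larger and smaller than some $\Fc_t$-measurable level on a set of positive probability. Taking $\xi=\1_A$ and $\xi'=\1_B$ then contradicts the equality above, exactly as in Step~1 of Theorem~\ref{thm:main}. Hence $D_T=\E[D_T|\Fc_t]$.

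Rearranging this gives the identity. Since $\Gamma$ is a martingale, $\E[\Gamma_T|\Fc_t]=1$, and
\[
\Gamma_T=\frac{D_T}{e^{\int_t^T\beta}}=\frac{\E[\Gamma_Te^{\int_t^T\beta}|\Fc_t]}{e^{\int_t^T\beta}}.
\]
Taking conditional expectations of $\Gamma_T$ identifies $\E[D_T|\Fc_t]=1/\E[e^{-\int_t^T\beta}|\Fc_t]$, which is exactly \eqref{eq:cons1}. For the CLLI statement I will repeat the argument verbatim with the deterministic maturity $T'$ in place of $T$, using BSDE$(T',y,g)$. There one only needs the conditional law-invariance at the fixed horizon $T'$, which is precisely CLLI.

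The main obstacle is the differentiation step. I need Gâteaux differentiability of $\Ec^g_{t,T}$ at a constant terminal condition in bounded directions, with the derivative given by the linearized BSDE, and the linearized BSDE must have the stated Girsanov-type representation even though $\beta$ may grow like $|z|^\alpha$ with $\alpha\ge1$. This is delegated to Theorem~\ref{thm:diff}. For the representation I would use reverse Hölder inequalities for $\Gamma$ together with the BMO bound on $Z^y$ to ensure $D_T\in L^1$ and that $\Gamma\, e^{\int\beta}\nabla Y$ is a true martingale. The atomlessness argument and the final algebra are routine.
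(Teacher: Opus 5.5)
Your proposal is correct and follows essentially the same route as the paper: you differentiate at the constant terminal condition via Theorem~\ref{thm:diff}, you write the derivative as $\E[D_T\xi\,|\,\Fc_t]$ with $D_T=\mathcal{E}(\int_t^\cdot\partial_z g\,dW)_T\exp(\int_t^T\partial_y g\,ds)$ (the paper's $\Gamma^{t,y}_T$), and you use LI $\Rightarrow$ CLI together with conditional atomlessness (Lemma~\ref{lemma:condprob}) to force $D_T$ to be $\Fc_t$-measurable before concluding from $\E[\mathcal{E}(\cdot)_T\,|\,\Fc_t]=1$. The one correction is a cross-reference: your atomlessness step is the conditional argument of the paper's Step~2 here, with sets in $\Fc_T$ of equal $\Fc_t$-conditional mass, and not the unconditional argument of Step~1 of Theorem~\ref{thm:main}; the mathematics is unaffected.
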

\begin{proof} The proof is divided into three steps.

    \emph{\underline{Step 1:}}
    Using the differentiability results presented in Theorem \ref{thm:diff}, we have that for all $X \in L^{\infty}(\Fc_T)$, $(t,y) \in [0,T] \times \R$ 
    \begin{equation} \label{eq:limitEg}
         \lim_{\eps \to 0^+} \frac{1}{\eps} \left( \Ec^g_{t, T} \left [y + \eps X\right] - \Ec^g_{t, T} \left [ y \right] \right) = U^X_t, \quad \text{in } L^1(\P),
    \end{equation}  
    where $(U^X, V^X)$ is the unique solution of the following linear BSDE
\begin{align*}
    \begin{cases}
         -d U_s^X = \left(\partial_y g(s, Y_s^y, Z_s^y) U_s^X + \partial_z g(s, Y_s^y, Z_s^y) V_s^X   \right) ds -V_s^X dW_s, ~~ s \in [t, T), \\
        U_{T}^X = X,
    \end{cases}
\end{align*}
   and $(Y^y, Z^y)$ is the solution of BSDE$(T, y, g)$. Furthermore, we have
   \begin{equation} \label{eq: equalityUcond}
        U_t^X = \E\left [\Gamma^{t,y}_T X \Big | \Fc_t\right],
   \end{equation}
   where $\Gamma^{t,y}$ is the solution of the linear SDE $$d\Gamma^{t,y}_s= \partial_y g(s, Y^y_s, Z_s^y) \Gamma^{t,y}_s ds + \partial_z g(s, Y^y_s, Z_s^y)  \Gamma^{t,y}_s dW_s, ~~ \Gamma^{t,y}_t = 1. $$
   Therefore, from \eqref{eq:limitEg} and \eqref{eq: equalityUcond}, we obtain the following relationship
   \begin{equation} \label{eq:equalityLimitCond}
       \lim_{\eps \to 0^+} \frac{1}{\eps} \left( \Ec^g_{t, T} \left [y + \eps X\right] - \Ec^g_{t, T} \left [ y \right] \right) = \E\left [\Gamma^{t,y}_T X \Big | \Fc_t\right].
   \end{equation}
   Herein, we first study the conditional law-invariant case and deduce the result on law-invariance afterwards. Fix $(t,y)\in[0,T]\times\R$. Since $g$ is conditionally law-invariant (by Theorem \ref{thm:const}, LI $\implies$ CLI), we deduce that the linear operator
$$ X \mapsto \E\left[\Gamma_T^{t,y} X \Big|\Fc_t\right] $$
is conditionally law-invariant in the sense that for all $X,X^{\prime}\in L^\infty(\Fc_T)$,
\begin{equation} \label{eq:linop}
    Law(X|\Fc_t)=Law(X^{\prime}|\Fc_t),~\P\text{-a.s.}\ \Longrightarrow\ 
\E\left[\Gamma_T^{t,y} X \Big|\Fc_t\right]=\E\left[\Gamma_T^{t,y} X^{\prime} \Big|\Fc_t\right],~\P\text{-a.s.}
\end{equation}
Indeed, one can see that for all $X,X^{\prime}\in L^\infty(\Fc_T)$ satisfying the left hand side equality in \eqref{eq:linop},
we have for all $y \in \R$ and $\eps >0$ $$Law(y + \eps X|\Fc_t)=Law(y + \eps X^{\prime}|\Fc_t),~\P\text{-a.s.},$$ 
hence
$$\Ec^g_{t, T} \left [y + \eps X\right] = \Ec^g_{t, T} \left [y + \eps X^{\prime} \right],~\P\text{-a.s.} $$
Together with \eqref{eq:equalityLimitCond}, this yields \eqref{eq:linop}.

\emph{\underline{Step 2:}}
We show here that $\Gamma^{t,y}_T$ is $\Fc_t$-measurable.
We argue by contradiction using the conditional 
atomlessness of $\Fc_T$ relative to $\Fc_t$ 
(Lemma~\ref{lemma:condprob}). By Lemma \ref{lemma:condprob}, there exists $D \in \Fc_t$ such that $\P(D)>0$ and two events $A \in \Fc_{T}$ and $B \in \Fc_{T}$ such that $\P(A | \Fc_t) = \P(B | \Fc_t) > 0$, $\P\text{-a.s.}$ on $D$. Furthermore, there exists an $\Fc_t$-measurable random variable $m_t$ such that
\begin{align*}
    \Gamma_T^{t,y} \1_A &\leq m_t \1_A,\qquad \P\text{-a.s.}\\
    \Gamma_T^{t,y} \1_B &> m_t \1_B,\qquad \P\text{-a.s.} 
\end{align*}
Set $X:=\1_{A\cap D}$ and $X^{\prime}:=\1_{B\cap D}$. Since $\P(A|\Fc_t)=\P(B|\Fc_t)$ on $D$,
we have $\P(A\cap D|\Fc_t)=\P(B\cap D|\Fc_t)$, $\P\text{-a.s.}$ Therefore, since $X$ and $X^{\prime}$ are two random variables following the same Bernoulli distribution with the same parameter conditionally to $\Fc_t$,
$$
Law(X|\Fc_t)=Law(X^{\prime}|\Fc_t),\qquad \P\text{-a.s.}
$$
By \emph{Step 1},
\begin{equation} \label{eq:equalityCond}
\E\!\left[\Gamma_T^{t,y} X|\Fc_t\right]=\E\!\left[\Gamma_T^{t,y} X^{\prime}|\Fc_t\right],\qquad \P\text{-a.s.}
\end{equation}
On the other hand, from $\Gamma_T^{t,y}\1_A\le m_t\1_A$ and $\Gamma_T^{t,y}\1_B>m_t\1_B$, we get
$$
\E\!\left[\Gamma_T^{t,y} X|\Fc_t\right]\le m_t\,\E[X|\Fc_t],\qquad
\E\!\left[\Gamma_T^{t,y} X^{\prime}|\Fc_t\right]> m_t\,\E[X^{\prime}|\Fc_t].
$$
Since $\E[X|\Fc_t]=\P(A\cap D|\Fc_t)=\P(B\cap D|\Fc_t)=\E[X^{\prime}|\Fc_t]$,
we obtain on $D$ the strict inequality
$$
\E\!\left[\Gamma_T^{t,y} X|\Fc_t\right]<\E\!\left[\Gamma_T^{t,y} X^{\prime}|\Fc_t\right],
$$
a contradiction with \eqref{eq:equalityCond}. Hence $\Gamma_T^{t,y}$ is $\Fc_t$-measurable. 

\emph{\underline{Step 3:}}
We now show the constraint \eqref{eq:cons1}. Under the assumptions \textbf{(A1)}-\textbf{(A4$^*$)} (cf. Lemma \ref{lem:coeff_satisfy_L}), $\Ec \left( \int_{t}^T \partial_z g(s, Y^y_s, Z^y_s) dW_s \right)_{\cdot}$ is a u.i. martingale. Hence it satisfies for all $t \in [0,T]$ 
\begin{equation} \label{eq:equalityMart}
\E \left[\Ec \left( \int_{t}^T \partial_z g(s, Y^y_s, Z^y_s) dW_s \right)_T \Big | \Fc_t \right] = 1, ~~ \P\text{-a.s.}    
\end{equation}
Furthermore, since $\Gamma_T^{t,y}$ is $\Fc_t$-measurable 
we have, using
$$
\Ec\left(\int_t^T \partial_z g(s,Y_s^y,Z_s^y)\,dW_s\right)_T
=\Gamma_T^{t,y}\exp\left(-\int_t^T \partial_y g(s,Y_s^y,Z_s^y)\,ds\right),
$$
and \eqref{eq:equalityMart} 
$$
1=\Gamma_T^{t,y}\,\E\left[\exp\left(-\int_t^T \partial_y g(s,Y_s^y,Z_s^y)\,ds\right)\Big|\Fc_t\right].
$$
Hence
$$
\Gamma_T^{t,y}
=\Frac{1}{\E\left[\exp\left(-\int_t^T \partial_y g(s,Y_s^y,Z_s^y)\,ds\right)\Big|\Fc_t\right]}.
$$
Thus, we conclude for all $(t,y) \in [0,T] \times \R$
$$ \Ec \left( \int_{t}^T \partial_z g(s, Y^y_s, Z^y_s) dW_s \right)_T = \Frac{\exp\left( -\int_{t}^T \partial_y g(s, Y^y_s, Z^y_s) ds \right)}{\E \left[\exp\left( -\int_{t}^T \partial_y g(s, Y^y_s, Z^y_s) ds \right) \Big | \Fc_{t} \right]}, ~~ \P\text{-a.s.}$$
The proof for the CLLI case (i.e. statement \eqref{eq:cons2}) follows by the same arguments as above by taking a different maturity $T^{\prime} \in [0,T]$ from the beginning.

\end{proof}
\begin{Remark}
    Note that, in contrast to the proof of Theorem~\ref{thm:main}, we do not use the invariance properties of the Brownian motion in the proof above.
\end{Remark}
We present here two Theorems as special cases of Theorem \ref{thm:general} to give an explicit use of the two conditions \eqref{eq:cons1} and \eqref{eq:cons2}. To derive necessary and sufficient conditions in a special subcase, we consider the following extra assumption:
\begin{itemize}
     \item[ \textbf{(A5)}] $g$ is independent of $y$ and for $dt\otimes d\P$-a.e.\ $(t,\omega) \in [0,T] \times \Omega$, and the equation $\partial_z g(t,\omega,z)=0$ has the unique solution $z=0$.
\end{itemize}

\begin{Theorem}[LI]\label{cor:litransinv}
Assume \textbf{(A1)}--\textbf{(A4$^*$)} and \textbf{(A5)} hold. Then, the following are equivalent:
  \begin{itemize}
        \item[i)] The BSDE \eqref{eq:BSDE} is LI;
        \item[ii)] The generator $g$ has the following form for all $z \in \R^d$ and $dt \otimes d\P$-a.e. $(t,\omega) \in [0,T) \times \Omega$  $$ g(t, \omega, z) = g(t, \omega, 0) + \beta \lvert z \lvert^2,$$
        where $\beta \in \R$, and $\int_0^T g(t,\cdot,0)dt$ is deterministic $\P$-a.s.
\end{itemize}
\end{Theorem}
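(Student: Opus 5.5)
For ii) $\Rightarrow$ i), I would exploit translation invariance. Write $g(t,\omega,z)=h_t(\omega)+\beta|z|^2$ with $h_t:=g(t,\cdot,0)$. Set $\tilde Y_t:=Y_t-\int_t^T h_s\,ds$. Then $(\tilde Y,Z)$ solves the BSDE with generator $\beta|z|^2$ and terminal value $X$, so
\[
\Ec^g_{0,T}[X]=\Ec^{\beta|\cdot|^2}_{0,T}[X]+\int_0^T h_s\,ds .
\]
Strictly, the shifted process has terminal value $X$ only after adding the $\Fc_T$-measurable integral at time $0$. Since $\int_0^T h_s\,ds$ is deterministic by assumption, it suffices to note that the first term equals $\frac{1}{2\beta}\log\E[e^{2\beta X}]$ (or $\E[X]$ if $\beta=0$), which is law-invariant. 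To make this rigorous I would apply Itô to $Y_t+\int_0^t h_s\,ds$, which has generator $\beta|z|^2$ and terminal value $X+\int_0^T h_s\,ds = X + c$. Translation invariance (Proposition~\ref{prop:Eg}) then gives $Y_0=\Ec^{\beta|\cdot|^2}_{0,T}[X]+c$.

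For i) $\Rightarrow$ ii), I would use Theorem~\ref{thm:general}. Under \textbf{(A5)} we have $\partial_y g=0$, so the right-hand side of \eqref{eq:cons1} equals $1$. Hence for every $t$,
\[
\Ec\Big(\int_t^T\partial_z g(s,Z^y_s)\,dW_s\Big)_T=1 \quad \P\text{-a.s.}
\]
A stochastic exponential with BMO integrand equal to $1$ at $T$ has, by the martingale property, $\Ec(\cdot)_s=\E[\Ec(\cdot)_T|\Fc_s]=1$ for all $s$. This forces $\partial_z g(s,Z^y_s)=0$ $ds\otimes d\P$-a.e. on $[t,T]$, so by \textbf{(A5)} $Z^y\equiv 0$. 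The BSDE then reads $Y^y_t=y+\int_t^T g(s,0)\,ds$, and $Y^y_0$ must be $\Fc_0$-measurable, hence deterministic. This gives the condition that $\int_0^T g(t,\cdot,0)\,dt$ is deterministic.

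To identify the $z$-dependence, I would reduce to the first regime. Define $\hat g(t,\omega,z):=g(t,\omega,z)-g(t,\omega,0)$. The identity $\Ec^g_{0,T}[X]=\Ec^{\hat g}_{0,T}[X]+\int_0^T g(s,0)\,ds$, which holds with the constant shift as above, shows that $\hat g$ is LI. Since $\hat g(t,\omega,0)=0$ and $\hat g$ inherits \textbf{(A1)}--\textbf{(A4)}, Theorem~\ref{thm:main} yields $\hat g(t,\omega,z)=k|z|^2$ with $k$ constant, because there is no $y$-dependence. Setting $\beta:=k$ completes the proof.

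The main obstacle is the rigorous shift argument: $\int_t^T h_s\,ds$ is not adapted, so I would instead use the adapted process $Y_t+\int_0^t h_s\,ds$ and check that $\hat g$ satisfies \textbf{(A2)} with the $\omega$-dependent zero-order term removed. Verifying that Theorem~\ref{thm:general} applies, through the BMO property from Lemma~\ref{lem:coeff_satisfy_L}, is routine.
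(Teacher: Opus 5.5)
Your proposal is correct and follows essentially the same route as the paper. You use Theorem~\ref{thm:general} with $\partial_y g=0$ to force the stochastic exponential to equal $1$, which gives $Z^0\equiv 0$ by \textbf{(A5)} and hence that $\int_0^T g(t,\cdot,0)\,dt$ is deterministic; the adapted shift $Y_t+\int_0^t g(s,0)\,ds$ then reduces to the centered generator $g-g(\cdot,0)$, Theorem~\ref{thm:main} yields $\beta|z|^2$, and the same change of variables gives the converse. Your extra checks (that the centered generator inherits \textbf{(A1)}--\textbf{(A4)}, and why the exponential equal to $1$ forces the integrand to vanish) only fill in details the paper leaves implicit.
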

\begin{proof}
    We first assume that the generator $g$ of BSDE \eqref{eq:BSDE} is law-invariant. By Theorem \ref{thm:general} and assumption \textbf{(A5)}, we deduce from \eqref{eq:cons1} with $(Y^0,Z^0)$ the solution of BSDE$(T, 0, g)$, that 
    
    $$\Ec \left( \int_{0}^T \partial_z g(s, Z^0_s) dW_s \right)_T = 1,$$
    which in turn yields
    $$\partial_z g(t, Z^0_t) = 0, ~~ dt\otimes d \P \text{-a.e.}$$
    By assumption \textbf{(A5)}, this is only achievable if $Z^0 \equiv 0$, $\P$-a.s.
    Thus
    $$ \Ec^{g}_{0,T}[0] = \int_0^T g(t, 0)dt,~~ \P\text{-a.s.}$$
    Hence $\int_0^T g(t, 0)dt$ is $\P$-a.s.\ deterministic.
    By a classic change of variable, we obtain for any $X \in L^{\infty}(\Fc_t)$ 
    $$ \Ec^g_{0,T}\left[ X \right] = \Ec^{\Tilde g}_{0,T}\left[ X + \int_0^T g(t,0)dt \right], $$
    where $\tilde{g}(t,\omega, z) = g(t, \omega, z) - g(t,\omega, 0)$.
    Since $\int_0^T g(t, 0)dt$ is $\P$-a.s.\ deterministic and that $\Ec^g$ is law-invariant, we obtain that $\Ec^{\tilde{g}}$ is law-invariant. Finally, by Theorem \ref{thm:main}, we conclude that there exists $\beta \in \R$ such that for all $z \in \R^d$ 
    $$ \tilde{g}(t, \omega, z) = \beta \lvert z \lvert^2, ~~ dt\otimes d\P \text{-a.e.}~(t,\omega) \in[0,T) \times \Omega.$$
    This concludes the first implication. The proof for the converse implication is a direct consequence of the same change of variables above.
\end{proof}

\begin{Theorem}[CLLI]\label{cor:CLLItransinv}
Assume \textbf{(A1)}--\textbf{(A4$^*$)} and \textbf{(A5)} hold. Then, the following are equivalent:
  \begin{itemize}
        \item[i)] The BSDE \eqref{eq:BSDE} is CLLI;
        \item[ii)] The generator $g$ has the following form for all $z \in \R^d$ and $dt \otimes d\P$-a.e. $(t,\omega) \in [0,T) \times \Omega$  $$ g(t, \omega, z) = g(t, \omega, 0) + \beta \lvert z \lvert^2,$$
        where $\beta \in \R$, and $g$ is deterministic $dt \otimes d\P$-a.e.
\end{itemize}
\end{Theorem}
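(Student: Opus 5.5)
For i) $\Rightarrow$ ii), I would follow the proof of Theorem~\ref{cor:litransinv}, now using the CLLI identity \eqref{eq:cons2} with varying maturities $T'\in[0,T]$.

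\emph{Vanishing of $Z^{0,T'}$.} Fix $T'$ and take $y=0$. Since $g$ does not depend on $y$, $\partial_y g\equiv 0$, so the right-hand side of \eqref{eq:cons2} equals $1$. Hence $\Ec(\int_t^{T'}\partial_z g(s,Z^{0,T'}_s)dW_s)_{T'}=1$. This is a strictly positive u.i.\ martingale (Lemma~\ref{lem:coeff_satisfy_L}) with terminal value $1$, so it is identically $1$ and its quadratic variation vanishes. Therefore $\partial_z g(s,Z^{0,T'}_s)=0$ $ds\otimes d\P$-a.e., and \textbf{(A5)} gives $Z^{0,T'}\equiv 0$ on $[0,T']$. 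The BSDE then yields
$$\Ec^g_{t,T'}[0]=\int_t^{T'} g(s,\cdot,0)\,ds \quad \P\text{-a.s.},$$
for every $t\le T'$.

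\emph{$g(\cdot,0)$ is deterministic.} Apply CLLI with $\sigma=t$, $X=X'=0$; this alone gives nothing. Instead, use the $\Fc_t$-measurability of $\Ec^g_{t,T'}[0]$ together with CLLI, arguing as in Step 1 of Theorem~\ref{thm:main}. Suppose $\Ec^g_{t,T'}[0]$ is not deterministic. Choose $A,B\in\Fc_t$ with $\P(A)=\P(B)>0$ and a level $m$ separating the values of $\Ec^g_{t,T'}[0]$ on $A$ and on $B$. Comparing $\Ec^g_{0,T'}$ of suitable indicator-type payoffs, using time-consistency and strict monotonicity, should contradict CLLI at $\sigma=0$. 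Since $g(t,0,0)\ne0$ in general, the zero-one law has no $\1_A$ factorization, so I would first reduce to $\tilde g(t,\omega,z):=g(t,\omega,z)-g(t,\omega,0)$ via translation invariance (Proposition~\ref{prop:Eg}, item 4):
$$\Ec^g_{\sigma,T'}[X]=\Ec^{\tilde g}_{\sigma,T'}\Big[X+\int_\sigma^{T'}g(s,0)ds\Big].$$

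A cleaner route is to deduce determinism of $\int_0^{T'} g(s,0)ds$ for each $T'$ directly. Apply CLLI with $\sigma=0$ to $X=X'=0$ but different maturities: LI of $\Ec^g_{0,T'}$ forces $\int_0^{T'} g(s,0)ds$ to be deterministic, by the same argument as in Theorem~\ref{cor:litransinv} with $T$ replaced by $T'$. Then $t\mapsto\int_0^{t} g(s,\omega,0)ds$ equals a deterministic function for every rational $t$, hence for all $t$ by continuity. Lebesgue differentiation then shows that $g(t,\omega,0)$ is deterministic $dt\otimes d\P$-a.e.

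\emph{Quadratic form of $\tilde g$.} With $g(\cdot,0)=h(t)$ deterministic, the identity above shows that $\Ec^{\tilde g}_{0,T}$ is LI, since $\int_0^T h$ is a constant. Theorem~\ref{thm:main} then gives $\tilde g=\beta|z|^2$ with $\beta$ deterministic, possibly depending on $y$, but here $y$ is absent, so $\beta\in\R$. Consequently $g=h(t)+\beta|z|^2$ is deterministic.

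For ii) $\Rightarrow$ i), take $g=h(t)+\beta|z|^2$ with $h$ deterministic. Translation invariance gives
$$\Ec^g_{\sigma,T'}[X]=\Ec^{\beta|z|^2}_{\sigma,T'}[X]+\int_\sigma^{T'}h(s)ds.$$
If $\sigma$ is a stopping time, the integral is $\Fc_\sigma$-measurable and identical for $X$ and $X'$. The operator $\Ec^{\beta|z|^2}$ is CLLI by Theorem~\ref{thm:main} and Theorem~\ref{thm:const}, since it satisfies $g(\cdot,0)=0$. Hence $\Ec^g$ is CLLI. The main obstacle is the middle step, passing from determinism of the integrals over $[0,T']$ for every $T'$ to pointwise determinism of $g(\cdot,0)$; I expect continuity in $T'$ together with Lebesgue differentiation to handle it.
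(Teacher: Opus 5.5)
Your proposal is correct and follows the paper's route: the CLLI constraint \eqref{eq:cons2} with $y=0$ and $\partial_y g\equiv 0$ forces $Z^{0,T'}\equiv 0$, so $\int_0^{T'}g(s,0)\,ds=\Ec^g_{0,T'}[0]$ is deterministic for every $T'$; one then passes to $\tilde g=g-g(\cdot,0)$ and applies Theorem~\ref{thm:main}, with the converse following from translation invariance. Your rational-$T'$, continuity and Lebesgue-differentiation argument makes explicit the step from these integrals to the determinism of $g(\cdot,0)$, which the paper leaves implicit, and your abandoned first attempt via Step~1 of Theorem~\ref{thm:main} is not needed.
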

\begin{proof}
We apply the same proof of Theorem \ref{cor:litransinv} and obtain that for all $T^{\prime} \in [0,T]$ that $\int_0^{T^{\prime}} g(t, 0)dt $ is deterministic $\P$-a.s.
\end{proof}
We give now an example of a non-deterministic generator $g$ such that $\int_0^T g(t,0)dt$ is deterministic. 
\begin{Remark}\label{rem:LI_not_CLLI}
Let $\tau$ be a non-deterministic stopping time in $\Tc_{0,T-\varepsilon}$ and set
$$ h_t(\omega)=\mathbf 1_{[\tau(\omega),\tau(\omega)+\varepsilon]}(t). $$
Then $\int_0^T h_s\,ds=\varepsilon$ is deterministic, so Theorem \ref{cor:litransinv} yields law-invariance for
$g(t,\omega,y,z)=h_t(\omega)+\frac12|z|^2$, while conditional local law-invariance fails since $h$ is not deterministic
$dt\otimes d\P$-a.e. (Theorem \ref{cor:CLLItransinv}).
\end{Remark}

\section{Law-invariant Dynamic risk measures} \label{sec:risk}
In this Section, we present the implication of the main theorems above on the theory of (fully) dynamic risk measures.
In particular, using our results on law-invariant BSDEs from the previous Section, we show that we can construct a large class of law-invariant risk measures which goes beyond the results from the existing literature (e.g. Kupper-Schachermayer (2009)\cite{KupperSchachermayer2009}).

Now, we recall the definition of a dynamic convex risk measure in a general setting. We cite \cite{BionNadal2008} among others.
\begin{Definition}
A  dynamic risk measure   $(\rho_{\sigma, \tau})_{0 \leq \sigma \leq \tau}$ is a 
family of maps 
defined on $L^{\infty}({\cal F}_{\tau})$ with values into $L^{\infty}({\cal F}_{\sigma})$ such that each $\rho_{\sigma,\tau}$ is a convex conditional risk measure, i.e. $\rho_{\sigma,\tau}$ satisfies  the following  properties, for any $\sigma$,
$\tau \in \Tc_{0,T}$ with $\sigma \le \tau$, $\P$-a.s. : 
\begin{itemize}
\item[\(1.\)] \it{Convexity}: ~For any $X, X^{\prime} \in L^{\infty}(\Fc_{\tau})$ and $\lambda \in [0,1]$,

$$\rho_{\sigma,\tau}(\lambda X +(1-\lambda)X^{\prime}) \leq \lambda \rho_{\sigma,\tau}(X)+(1-\lambda) \rho_{\sigma,\tau}(X^{\prime}), ~~ \P\text{-a.s.};$$ 

\item[\(2.\)] \it{Monotonicity}: ~For any $X, X^{\prime} \in L^{\infty}(\Fc_{\tau})$ with $X \geq X^{\prime}$,
$\P$-a.s., we have $\rho_{\sigma,\tau}(X) \geq \rho_{\sigma,\tau}(X^{\prime})$,
$\P$-a.s.  
\end{itemize}
Moreover, a dynamic convex risk measure can have the following additional properties :
\begin{itemize}
\item[\(3.\)] \it{Strict-Monotonicity}: ~$\rho$ satisfies (2) and, if $\rho_{\sigma,\tau}(X) = \rho_{\sigma,\tau}(X^{\prime})$,
$\P$-a.s., then $X=X^{\prime}$, $\P$-a.s.;

\item[\(4.\)] \it{Cash-additivity}: $$
\;\;\rho_{\sigma,\tau}(X+X^{\prime})=\rho_{\sigma,\tau}(X)+X^{\prime}, \quad \P\text{-a.s.} \quad
\forall X \in L^{\infty}(\Fc_{\tau}),\quad X^{\prime} \in L^{\infty}(\Fc_{\sigma});$$

\item[\(5.\)] \it{Normalized}:~ If $$\rho_{\sigma,\tau}(0)=0, ~~ \P\text{-a.s.};$$
\item[\(6.\)] \it{Fatou property}:~ It is continuous from below (resp  above) 
if for every increasing (resp decreasing) sequence $X_n$ of elements of  $L^{\infty}({\cal F}_{\tau})$ such that $X=\lim\;X_n$, the decreasing (resp increasing) 
sequence $\rho_{\sigma,\tau}(X_n) $ has the limit $\rho_{\sigma,\tau}(X)$;
\item[\(7.\)] \it{Time-Consistency}:~ For any $\iota$, $\sigma$,
$\tau \in \Tc_{0,T}$ with $\iota \le \sigma \le \tau$, $\P$-a.s. :
$$
\rho_{\iota,\sigma}\big(\rho_{\sigma,\tau}(X)\big)=\rho_{\iota,\tau}(X),
\quad ~~ \P\text{-a.s.} \quad \forall X \in L^{\infty}(\Fc_{\tau});
$$
\end{itemize}
Finally, we say that a dynamic risk measure is cash non-additive to emphasize that it does not satisfy cash-additivity.
\label{definition1}
\end{Definition} 
\begin{Remark}
We naturally extend the law-invariance Definitions~\ref{def:li}, \ref{def:cli}, \ref{def:oscli}, and \ref{def:mcli} from Section~\ref{sec:LI} to dynamic risk measures.
\end{Remark}
\subsection{Cash-additive risk measures} 
In this subsection, we are interested in studying the specific case of cash-additive (or translation invariant) and normalized dynamic convex risk measures. Furthermore, we assume that the Fatou property is satisfied. Then, the robust representation holds: for every $0\le \sigma\le \tau\le T$ and $X\in L^\infty(\Fc_\tau)$,
$$
\rho_{\sigma,\tau}(X)
=
\text{esssup}_{\Q\in\tilde{\cal M}_{\sigma,\tau}}
\Big(\E^\Q[X | \Fc_\sigma]-\alpha^m_{\sigma,\tau}(\Q)\Big),
\qquad \P\text{-a.s.},
$$
with $\tilde{\cal M}_{\sigma,\tau}=\{\Q\ll\P:\ \Q_{|\Fc_\sigma}=\P\}$ and $\alpha^m$ the minimal penalty.
This is the conditional version of the dual representation introduced by \cite{FollmerSchied2002article} in the static setting and \cite{DetlefsenScandolo2005} in the dynamic setting.

 We now provide the following theorem which fully characterizes law-invariant and time-consistent dynamic convex risk measures.
\begin{Theorem} \label{thm:risk}
Let $\rho$ be a dynamic convex risk measure which is strictly monotone, cash-additive, normalized, and satisfies the Fatou property. Then the following are equivalent:
\begin{itemize}
\item[i)] $\rho$ is law-invariant, time-consistent, its robust representation is attained in some $\Q^\ast\ll\P$ for each payoff $X\in L^\infty(\Fc_T)$, and
$$
\alpha^m_{0,T}(\P)=0;
$$
\item[ii)] There exists $\beta \geq 0$ such that $\rho$ is generated by the following BSDE :
\begin{align*}
\begin{cases}
-dY_t = \beta \lvert Z_t \lvert^2 \,dt - Z_t dW_t,\quad t\in[0,T),\\[1mm]
Y_T = X \in L^{\infty}(\Fc_T).
\end{cases}
\end{align*}
\end{itemize}
Moreover, LI, CLI, CLLI, and MLI are equivalent for $\rho$.
\end{Theorem}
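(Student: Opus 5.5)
The plan is to link $\rho$ to a quadratic $g$-expectation and then apply the characterization of Section~\ref{sec:LI}.

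\emph{Direction ii) $\Rightarrow$ i).} Take $g(z)=\beta|z|^2$ with $\beta\ge 0$. This generator satisfies \textbf{(A1)}--\textbf{(A4)}. By Theorem~\ref{thm:main}, with $k\equiv\beta$, it is LI, and Theorem~\ref{thm:const} upgrades this to CLI, CLLI and MLI. Time-consistency is Proposition~\ref{prop:Eg}. For $\beta>0$ we have the explicit entropic form $\rho_{\sigma,\tau}(X)=\frac{1}{2\beta}\log\E[e^{2\beta X}|\Fc_\sigma]$; for $\beta=0$ it is the conditional expectation. The penalty is $\alpha^m_{0,T}(\Q)=\frac{1}{2\beta}H(\Q|\P)$, which vanishes at $\Q=\P$. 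The supremum is attained at the Gibbs measure $d\Q^\ast/d\P=e^{2\beta X}/\E[e^{2\beta X}]$, and convexity, normalization, cash-additivity, strict monotonicity and the Fatou property can all be checked directly.

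\emph{Direction i) $\Rightarrow$ ii).} I would first show that $\rho$ is a $g$-expectation with a generator $g(t,\omega,z)$ that is independent of $y$, convex in $z$, satisfies $g(t,\omega,0)=0$ and has quadratic growth. The route runs through Bion-Nadal's supermartingale property for normalized time-consistent measures, then the representation of Delbaen--Peng--Rosazza Gianin~\cite{DelbaenPengRosazza2010}, and then the regularity results of~\cite{DelbaenHuBao2011}.

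This step is where the extra hypotheses are needed. The condition $\alpha^m_{0,T}(\P)=0$ and attainment of the supremum give the domination/``$\mathcal E^\mu$'' condition required there. They also make $\P$ an element of every dual set with zero penalty, and this is what produces the quadratic bound $|g|\le\kappa+\ell|z|^2$ together with the BMO property of the optimal densities. Cash-additivity gives independence from $y$, and normalization gives $g(\cdot,0)=0$. I expect this identification to be the main obstacle, in particular verifying \textbf{(A1)}--\textbf{(A3)}. For $\partial_z g$ I would use the convex-analytic bound coming from the quadratic growth. If that bound is unavailable, I would fall back on Remark~\ref{rmk:assmain}, which only requires \textbf{(A1)}--\textbf{(A2)} together with strict monotonicity, the zero-one law and constant preservation. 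All of these hold for $\rho$ directly: strict monotonicity is assumed, and the other two follow from cash-additivity and normalization.

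Once this is done, $g(t,\omega,0)=0$, so Theorem~\ref{thm:main} applies in the form of Remark~\ref{rmk:assmain}. It yields $g(t,\omega,z)=k|z|^2$, and $k$ is a constant because $g$ does not depend on $y$. Convexity of $\rho$ forces convexity of $g$ in $z$ (a standard comparison argument), so $\beta:=k\ge 0$.

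Finally, since $g(\cdot,0)=0$, Theorem~\ref{thm:const} gives the equivalence of LI, CLI, CLLI and MLI for $\rho$.
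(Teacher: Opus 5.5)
Your skeleton is the paper's. It runs through Bion-Nadal's supermartingale property and the penalty representation of \cite{DelbaenPengRosazza2010} to reach a driver $g=r^*$, then growth control via \cite{DelbaenHuBao2011}, then Remark~\ref{rmk:assmain} with Theorem~\ref{thm:main}, with $k$ constant by $y$-independence, $\beta\ge 0$ by convexity, and Theorem~\ref{thm:const} for the four notions. Your direction ii) $\Rightarrow$ i) is more complete than the paper's, which never checks attainment or $\alpha^m_{0,T}(\P)=0$. Your relative-entropy penalty and the Gibbs maximiser $d\Q^\ast/d\P=e^{2\beta X}/\E[e^{2\beta X}]$ settle both.

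The gap is in the one step you describe concretely: how the hypotheses in i) make $\rho$ a quadratic $g$-expectation. The roles you assign to $\alpha^m_{0,T}(\P)=0$ and to attainment do not work.

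First, suppose your ``$\mathcal E^\mu$'' condition means domination by the $g$-expectation of $\mu|z|$, as in \cite{CoquetHuMeminPeng2002}. Then it cannot follow from i), because it fails for the very measures you want to reach. For the entropic measure, $\rho_{0,T}(X+\eps Y)-\rho_{0,T}(X)=\eps\,\E^{\Q_X}[Y]+o(\eps)$, where $\Q_X$ has Girsanov kernel $2\beta Z^X$. Domination by $\mathcal E^\mu[\eps Y]=\eps\sup_{|q|\le\mu}\E^{\Q^q}[Y]$ (with $\Q^q$ the measure with kernel $q$) would force $|2\beta Z^X|\le\mu$. But $Z^X$ is BMO and unbounded for suitable bounded $X$, e.g.\ $X=\1_{\{W^1_T>0\}}$. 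In any case, such a representation yields a Lipschitz driver, which is incompatible with $\beta|z|^2$ for $\beta>0$.

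Second, zero penalty at $\P$ only gives $r(t,0)=0$, hence the lower bound $g=r^*\ge -r(t,0)=0$. It gives no upper bound $|g|\le\kappa+\ell|z|^2$. The BMO property of $Z$ is a consequence of quadratic growth \cite{Kobylanski2000}, not a source of it.

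In the paper the hypotheses enter differently:
\begin{itemize}
\item After Doob--Meyer and Girsanov, the supremum in the robust representation gives $-dA_t\ge r^*(t,Z_t)\,dt$. Attainment at $\Q^\ast$ is what upgrades this to equality and identifies the driver.
\item Quadratic growth comes by contradiction from Theorem~2.2 of \cite{DelbaenHuBao2011}. A superquadratic driver admits a bounded terminal condition with no bounded solution or with several, which contradicts $\rho_{t,T}(X)$ being a well-defined element of $L^\infty(\Fc_t)$.
\end{itemize}
With these two arguments in place of yours, \textbf{(A1)}--\textbf{(A2)} are available. Your fallback to Remark~\ref{rmk:assmain}, which is exactly what the paper does, then finishes the proof without checking \textbf{(A3)}.
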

\begin{proof}
\noindent$\textit{i)} \Longrightarrow \textit{ii).}$ The proof is organized in two steps. In Step~1 we show that $\rho$ admits a BSDE representation whose generator $g$ satisfies $g(t,\omega,y,0) = 0$ and is independent of~$y$. In Step~2 we use law-invariance and Theorem~\ref{thm:main} to extract the form of~$g$.
 
\medskip
\noindent\emph{\underline{Step~1}: BSDE representation.}
Fix $X \in L^{\infty}(\Fc_T)$. Since $\rho$ is a normalized dynamic convex risk measure satisfying the Fatou property, it is in particular continuous from above. Hence, it admits the robust representation
\begin{equation}\label{eq:robust-rep}
\rho_{\sigma,\tau}(X)
=
\text{esssup}_{\Q\in\tilde{\mathcal{M}}_{\sigma,\tau}}
\Big(\E^\Q[X\mid\Fc_\sigma]-\alpha^m_{\sigma,\tau}(\Q)\Big),
\qquad \P\text{-a.s.},
\end{equation}
with $\alpha^m$ the minimal penalty (see Detlefsen--Scandolo~\cite{DetlefsenScandolo2005}, F\"ollmer--Schied~\cite{FollmerSchied2002article}, and Bion-Nadal~\cite{BionNadal2009}).
 
We combine the time-consistency property with the condition $\alpha^m_{0,T}(\P)=0$ to derive three intermediate results:
 
\smallskip
\noindent\textbf{(a)} By Theorem~3.2 in Delbaen--Peng--Rosazza~Gianin~\cite{DelbaenPengRosazza2010}, if the minimal penalty additionally satisfies the regularity property (termed Assumption~(g) therein), then for all $\Q\sim\P$,
\begin{equation}\label{eq:penalty-decomp}
\alpha^m_{\sigma,\tau}(\Q)
=
\E^\Q\Big[\int_\sigma^\tau r(s,\omega,q_s)\,ds \Big| \Fc_\sigma\Big],
\end{equation}
for some proper convex lower semicontinuous function $r(s,\omega,\cdot)$ and the Girsanov drift $q$ of $\Q$ with respect to $\P$. In our setting, this regularity property is automatically satisfied: as observed in Bion-Nadal~\cite{BionNadal2009}, any risk measure satisfying monotonicity and cash-additivity necessarily satisfies the Zero-one law property, which is equivalent to the regularity property required in~\cite{DelbaenPengRosazza2010}. Hence \eqref{eq:penalty-decomp} holds.
 
\smallskip
\noindent\textbf{(b)} By Theorem~3 in Bion-Nadal~\cite{BionNadal2009}, the time-consistency and normalization of $\rho$ imply the existence of a c\`adl\`ag supermartingale process $Y^X = (Y^X_t)_{0\le t\le T}$ such that for all $\sigma \in \Tc_{0,T}$,
\begin{equation}\label{eq:supermartingale}
\rho_{\sigma, T}(X) = Y_{\sigma}^X, \qquad\P\text{-a.s.}
\end{equation}
Applying the Doob--Meyer decomposition theorem to the supermartingale $Y^X$, and the martingale representation theorem to the local martingale part, we obtain $Z \in \Hc^2$ and a unique non-increasing predictable process $A$ with $A_0 = 0$ such that for all $t \in [0,T]$,
\begin{equation}\label{eq:doob-meyer}
Y_{t}^X = X + A_t - A_T - \int_t^T Z_s\, dW_s.
\end{equation}
 
\smallskip
\noindent\textbf{(c)} We now show how the supremum in the robust representation~\eqref{eq:robust-rep}, combined with the Doob--Meyer decomposition~\eqref{eq:doob-meyer} and the penalty structure~\eqref{eq:penalty-decomp}, forces the BSDE driver to be the convex conjugate of the function~$r$.
 
Let $\Q\in\tilde{\mathcal{M}}_{t,T}$ with $\Q\sim\P$, and denote by $q = (q_s)_{t\le s\le T} \in \Hc^2$ the Girsanov drift of $\Q$ with respect to~$\P$, so that
$$
W_s^\Q := W_s - \int_t^s q_u\,du, \qquad s\in[t,T],
$$
is a $\Q$-Brownian motion. Starting from the decomposition~\eqref{eq:doob-meyer}, we derive
\begin{align}
X
&= Y_t^X - A_t + A_T + \int_t^T Z_s\,dW_s \notag\\
&= Y_t^X - A_t + A_T + \int_t^T Z_s\,dW_s^\Q +\int_t^T q_s Z_s\,ds,\label{eq:girsanov-conditional}
\end{align}
where the stochastic integral $\int_t^{\cdot} Z_s\,dW_s^\Q$ vanishes under $\E^\Q[\,\cdot\,|\Fc_t]$ since it is a $\Q$-local martingale (and a true $\Q$-martingale by the BMO property of $\int Z dW$ and the reverse H\"older inequality from Theorem~\ref{thm:bmo_toolkit}).
 
Substituting~\eqref{eq:girsanov-conditional} and the penalty decomposition~\eqref{eq:penalty-decomp} into the robust representation~\eqref{eq:robust-rep}, we obtain

\begin{equation}\label{eq:A-sup}
 S_t:=A_t + \int_0^t \big(q_s Z_s - r(s,q_s)\big)\,ds \geq \,\E^\Q\Big[A_T + \int_0^T \big(q_s Z_s - r(s,q_s)\big)\,ds\Big|\Fc_t\Big].
\end{equation}
The inequality~\eqref{eq:A-sup} yields that $S$ is a $\Q$-supermartingale with finite variation. Hence, $-dA_t - \big(q_t Z_t - r(t,q_t)\big)dt$ defines a non-negative measure. Thus, for all $q \in \Hc^2$
$$ - dA_t \geq \big(q_t Z_t - r(t,q_t)\big)dt, $$
in the sense of random measures.
In particular, since $r(s,\omega,\cdot)$ is proper, convex, and lower semicontinuous by~\textbf{(a)}, the pointwise maximisation in $q_s$ at each $(s,\omega)$ yields the convex conjugate (or Fenchel--Legendre transform) of $r$ with respect to the drift variable:
\begin{equation}\label{eq:fenchel}
r^*(s,\omega,z) := \sup_{q\in\R^d}\big\{q z - r(s,\omega,q)\big\}.
\end{equation}
Thus, one can construct a sequence of elements $q^n \in \Hc^2$ using the subdifferential of $r$ as was done in \cite{DelbaenHuBao2011} and obtain by passing to the limit  
$$
- dA_t \geq r^*(t,Z_t)dt.
$$
Observe that the inequality in~\eqref{eq:A-sup} is attained at some $\Q^\star\ll\P$ with drift $q^\star$ by assumption. At the optimiser, equality holds in~\eqref{eq:A-sup}, so that the martingality yields
$$
-dA_t = r^*(t,Z_t) dt,  ~~ \P\text{-a.s.}
$$

Plugging this into the Doob--Meyer decomposition~\eqref{eq:doob-meyer}, we conclude that for every $X\in L^\infty(\Fc_T)$, the pair $(Y^X,Z^X) := (Y^X,Z)$ satisfies the BSDE
\begin{equation}\label{eq:bsde-rep}
\rho_{t,T}(X)=Y_t^X,
\qquad
-dY_t^X=r^*(t,Z_t^X)\,dt-Z_t^X\,dW_t,
\qquad
Y_T^X=X.
\end{equation}
We refer to Bion-Nadal~\cite{BionNadal2009}, Rosazza~Gianin~\cite{RosazzaGianin2006}, and Delbaen--Hu--Bao~\cite{DelbaenHuBao2011} for the full technical details of this correspondence, including the uniqueness of the solution in $\Sc^\infty \times \Hc^2$.
 
We set
$$
g(t,\omega, y, z):=r^*(t,\omega,z).
$$
It remains to verify that $g$ satisfies $g(t,\omega,y,0) = 0$. This property is a classical consequence of the normalization and cash-additivity of $\rho$ via the inverse results on $g$-expectations developed in Briand~et~al.~\cite{Briand2000}, Rosazza~Gianin~\cite{RosazzaGianin2006}, and Zheng~\cite{ZHENG2024104464}: cash-additivity forces $g$ to be independent of~$y$, and normalization then forces $g(t,\omega,0) = 0$. Indeed, if $g$ were independent of~$y$ but $g(t,\omega,0) \neq 0$ on a set of positive $dt\otimes d\P$-measure, then $\rho_{0,T}(0) = \Ec^g_{0,T}[0] \neq 0$, contradicting normalization.

\smallskip
\noindent\emph{\underline{Step~2}: Law-invariance forces $g(t,z) = \beta|z|^2$.}
We first verify that the generator $g$ obtained in Step~1 has at most quadratic growth in~$z$, i.e., it satisfies Assumptions~\textbf{(A1)}--\textbf{(A2)}.
 
Suppose, for contradiction, that $g$ has superquadratic growth. Then, by Theorem~2.2 in Delbaen--Hu--Bao~\cite{DelbaenHuBao2011}, there exist bounded terminal conditions $X \in L^\infty(\Fc_T)$ for which the BSDE~\eqref{eq:bsde-rep} either has no bounded solution or admits multiple bounded solutions. However, the risk measure $\rho$ is well-defined on all of $L^\infty(\Fc_T)$ and yields a unique value $\rho_{t,T}(X) \in L^\infty(\Fc_t)$ for every $X\in L^\infty(\Fc_T)$ by~\eqref{eq:supermartingale}. This uniqueness is incompatible with either non-existence or non-uniqueness of bounded solutions. Therefore, $g$ cannot have superquadratic growth, and we are in the regime covered by Assumptions~\textbf{(A1)}--\textbf{(A2)}.
 
Now, by assumption~\emph{(i)}, $\rho$ is law-invariant. Since $\rho_{0,T}(X)=\Ec^g_{0,T}[X]$ for all $X \in L^\infty(\Fc_T)$, the BSDE driven by $g$ is law-invariant. The generator $g$ satisfies \textbf{(A1)}--\textbf{(A2)}, and satisfies $g(t,\omega,y,0) = 0$. Furthermore, the $g$-expectation $\Ec^g$ satisfies the Strict Monotonicity, Zero-one Law, and Constant-Preserving properties of Proposition~\ref{prop:Eg} from the assumptions imposed on the risk measure $\rho$. Hence, by Remark~\ref{rmk:assmain}, the hypotheses of Theorem~\ref{thm:main} are satisfied\footnote{We invoke Remark~\ref{rmk:assmain} rather than Theorem~\ref{thm:main} directly, since the generator $g$ obtained from the risk measure representation is only known to satisfy \textbf{(A1)}--\textbf{(A2)} at this stage, and Remark~\ref{rmk:assmain} observes that Assumptions \textbf{(A3)}--\textbf{(A4)} are used in Theorem~\ref{thm:main} only to guarantee the Strict Monotonicity, Zero-one Law, and Constant-Preserving properties -- all of which are already established here.}, and Theorem~\ref{thm:main} yields that $g$ is deterministic $dt\otimes d\P$-a.e.\ and there exists a deterministic function $k:\R\to\R$ such that
$$
g(t,\omega,y,z)=k(y)|z|^2,
\qquad
dt\otimes d\P\text{-a.e.}\  (t,\omega) \in [0,T) \times \Omega.
$$
Since $g$ is independent of $y$ (by cash-additivity and normalization established in Step~1), the function $k$ must be constant. Therefore there exists $\beta\in\R$ such that
$$
g(t,\omega,y,z)=\beta|z|^2,
\qquad
dt\otimes d\P\text{-a.e.}\  (t,\omega) \in [0,T) \times \Omega.
$$
Finally, the convexity of $\rho$ implies the convexity of $z\mapsto g(t,z) = \beta|z|^2$, which forces $\beta\ge 0$.
This proves \emph{i)} $\Rightarrow$ \emph{ii)}.
 
$\emph{ii)} \implies \emph{i).}$  Define $\rho_{t,T}(X):=\Ec^g_{t,T}[X]$ with $g(t,\omega,y,z)=\beta|z|^2$.
Then $\rho$ is a normalized cash-additive time-consistent dynamic convex risk measure and admits the corresponding robust representation, see Rosazza Gianin \cite{RosazzaGianin2006} and Delbaen--Peng--Rosazza Gianin \cite{DelbaenPengRosazza2010}.
It remains to check law-invariance. Since $g$ is deterministic and of the form $k(y)|z|^2$ with $k\equiv\beta$, Theorem \ref{thm:main} implies that the BSDE is law-invariant, hence $\rho$ is law-invariant.
Thus $(i)$ holds and the proof is complete. 

Finally, the equivalence of the different notions of law-invariance is a direct consequence of Theorem \ref{thm:const}.
\end{proof}
\begin{Remark}
    \cite{JouiniSchachermayerTouzi2006} showed that any law-invariant static risk measure satisfies the Fatou property in an atomless probability space. One may wonder if this result can be generalized to conditional risk measures hence to our framework.  
\end{Remark}
Theorem~\ref{thm:risk} has several implications. We now apply it to univariate dynamic shortfall risk measures and show that time-consistency forces the loss function to be either linear or exponential. 
\paragraph{Univariate Dynamic Shortfall risk measures} Let $\sigma \leq \tau \leq T$ be two stopping times and $X \in L^{\infty}(\Fc_{\tau})$, 
 \begin{equation*}
    \rho_{\sigma, \tau}(X) := \text{essinf} \left\{ m \in  L^{\infty}(\Fc_{\sigma}) : \E[l(X - m)|\Fc_{\sigma}] \leq 0 \right\}
\end{equation*}
where $l$ is called a loss function defined as follows (see e.g. \cite{FollmerSchied2002, BionNadal2004CMAP557}) :
\begin{Definition}
    A function $l:\R \to \R$ is called a loss function if:
    \begin{enumerate}
    \item $l$ is increasing, that is $l(x) \ge l(y)$ if $x \ge y$;
    \item $l$ is convex with $\inf l < 0$ and $l(0) = 0$.
    \end{enumerate}
\end{Definition}
\begin{Lemma}\label{lem:short}
    Let $\rho$ be a dynamic shortfall risk measure generated by a strictly increasing loss function $l$. Then the associated penalty function satisfies :
    $$ \alpha_{0,T}^m(\P) = 0.$$
\end{Lemma}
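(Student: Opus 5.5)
The plan is to compute the minimal penalty at $\P$ directly from its definition and use only two ingredients: Jensen's inequality for the convex loss $l$, and strict monotonicity of $l$. With the sign convention of the robust representation above, where $\rho_{\sigma,\tau}(X)=\text{esssup}_{\Q}\big(\E^\Q[X|\Fc_\sigma]-\alpha^m_{\sigma,\tau}(\Q)\big)$, the minimal penalty is
$$
\alpha^m_{0,T}(\Q)=\sup_{X\in L^\infty(\Fc_T)}\big(\E^\Q[X]-\rho_{0,T}(X)\big).
$$
It therefore suffices to prove two things:
\begin{itemize}
\item[(a)] $\rho_{0,T}(0)=0$, which gives $\alpha^m_{0,T}(\P)\ge 0$ by taking $X=0$;
\item[(b)] $\rho_{0,T}(X)\ge \E[X]$ for every $X\in L^\infty(\Fc_T)$, which gives $\alpha^m_{0,T}(\P)\le 0$.
\end{itemize}

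For (a) I compute $\rho_{0,T}(0)=\inf\{m\in\R:\ l(-m)\le 0\}$. Since $l$ is strictly increasing with $l(0)=0$, we have $l(-m)\le 0$ if and only if $-m\le 0$, that is, $m\ge 0$. The infimum is therefore $0$.

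For (b), fix $X\in L^\infty(\Fc_T)$ and any admissible $m\in\R$, meaning $\E[l(X-m)]\le 0$. Because $l$ is convex and $X-m$ is bounded, Jensen's inequality gives
$$
l\big(\E[X]-m\big)\le \E[l(X-m)]\le 0=l(0).
$$
Strict monotonicity of $l$ then forces $\E[X]-m\le 0$. Taking the infimum over admissible $m$ yields $\rho_{0,T}(X)\ge \E[X]$, so every term in the supremum defining $\alpha^m_{0,T}(\P)$ is nonpositive.

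Combining (a) and (b) gives $\alpha^m_{0,T}(\P)=0$. The same argument with conditional Jensen would give $\alpha^m_{\sigma,\tau}(\P)=0$ as well, though the statement does not require it.

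The only delicate point is the step $l(a)\le l(0)\Rightarrow a\le 0$. This is exactly where \emph{strict} monotonicity of $l$ is needed: if $l$ were merely nondecreasing and flat near $0$, the implication could fail. I also need the set of admissible $m$ to be nonempty, so that $\rho_{0,T}(X)$ is finite. This follows from $X$ being bounded, since $m=\|X\|_{L^\infty(\Fc_T)}$ is admissible.
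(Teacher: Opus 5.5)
Your proof is correct and follows essentially the same route as the paper: it uses the sup formula for the minimal penalty at $\P$, Jensen's inequality together with strict monotonicity of $l$ to get $\rho_{0,T}(X)\ge\E[X]$, and $X=0$ for the reverse inequality. The minor differences make your version slightly more self-contained: you bound every admissible $m$ and pass to the infimum instead of invoking the identity $\E[l(X-\rho_{0,T}(X))]=0$ from Bion-Nadal, and you verify $\rho_{0,T}(0)=0$ directly rather than taking normalization for granted.
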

\begin{proof}
Recall that the minimal penalty at $\P$ is given by
$$
\alpha_{0,T}^m(\P)=\sup_{X\in L^\infty(\Fc_T)}\Big(\E[X]-\rho_{0,T}(X)\Big).
$$
Let $X\in L^\infty(\Fc_T)$ and set $m:=\rho_{0,T}(X)\in\R$. By the properties of the shortfall risk measure, it is well known that (see Lemma 4 in \cite{BionNadal2004CMAP557})
$$
\E\big[l(X-m)\big]= 0.
$$
By Jensen's inequality and convexity of $l$,
$$
l\big(\E[X-m]\big)\le \E\big[l(X-m)\big] = 0.
$$
Since $l$ is increasing and $l(0)=0$, we have $l(u)\le 0 \Rightarrow u\le 0$, hence $\E[X-m]\le 0$, i.e.
$$
\rho_{0,T}(X)=m\ge \E[X].
$$
Therefore $\E[X]-\rho_{0,T}(X)\le 0$ for all $X$, which implies $\alpha_{0,T}^m(\P)\le 0$.
On the other hand, taking $X=0$ yields $\alpha_{0,T}^m(\P)\ge 0$ since $\rho_{0,T}(0)=0$.
Hence $\alpha_{0,T}^m(\P)=0$.
\end{proof}
\begin{Theorem} \label{cor:shortfall}
Let $\rho$ be a dynamic Shortfall risk measure with a strictly increasing loss function $l$. $\rho$ is time-consistent if and only if there exists $\beta \geq 0$ such that the loss function verifies the following
\begin{align*}
l(x) = 
\begin{cases}
\exp\left(2 \beta x\right) - 1, \quad &\text{if } \beta \neq 0, \\
x, \quad &\text{else.}\\
\end{cases}
\end{align*}
\end{Theorem}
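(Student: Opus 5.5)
The plan is to reduce Theorem~\ref{cor:shortfall} to Theorem~\ref{thm:risk} and then identify the loss function from the generator $\beta|z|^2$.

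\emph{Sufficiency.} For $\beta=0$ the definition gives $\rho_{\sigma,\tau}(X)=\E[X|\Fc_\sigma]$. For $\beta>0$, set $l(x)=e^{2\beta x}-1$. Then $\E[l(X-m)|\Fc_\sigma]\le 0$ is equivalent to $m\ge \frac{1}{2\beta}\log\E[e^{2\beta X}|\Fc_\sigma]$. This is exactly the entropic functional, and by Itô's formula applied to $e^{2\beta Y_t}$ it coincides with $\Ec^{g}_{\sigma,\tau}[X]$ for $g(z)=\beta|z|^2$. Time-consistency then follows from Proposition~\ref{prop:Eg}.

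\emph{Necessity.} Assume $\rho$ is time-consistent. We check the hypotheses of Theorem~\ref{thm:risk}~i):
\begin{itemize}
\item Shortfall risk measures are convex, cash-additive, normalized ($l(0)=0$) and have the Fatou property; these are standard, see \cite{FollmerSchied2002, BionNadal2004CMAP557}.
\item Strict monotonicity holds because $l$ is strictly increasing. If $X\ge X'$ with $\P(X>X')>0$ and $m=\rho(X')$, then $\E[l(X-m)|\Fc_\sigma]>0$ on a non-null set, so $\rho(X)>\rho(X')$ there.
\item Law-invariance is immediate, since the defining constraint depends on $X$ only through $Law(X|\Fc_\sigma)$ (or through the law of $X$ when $\sigma=0$).
\item $\alpha^m_{0,T}(\P)=0$ is Lemma~\ref{lem:short}.
\item Attainment of the robust representation: for shortfall measures the maximizing density is explicit, $d\Q^\ast/d\P\propto l'(X-m)$ (or a subgradient). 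This follows from the Föllmer--Schied computation of the minimal penalty via the conjugate of $l$.
\end{itemize}
Theorem~\ref{thm:risk} then gives $\rho_{t,T}(X)=\Ec^{g}_{t,T}[X]$ with $g=\beta|z|^2$, so $\rho_{0,T}(X)=\frac{1}{2\beta}\log\E[e^{2\beta X}]$ (or $\E[X]$ if $\beta=0$).

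\emph{Identifying $l$.} We compare the two descriptions of $\rho_{0,T}$ on Bernoulli-type payoffs, using the identity $\E[l(X-\rho_{0,T}(X))]=0$ from \cite{BionNadal2004CMAP557}. Take $X=a\1_A$ with $\P(A)=p\in(0,1)$ and set $m(a,p)=\frac{1}{2\beta}\log(1-p+pe^{2\beta a})$. Then
$$(1-p)\,l(-m(a,p))+p\,l(a-m(a,p))=0 \quad\text{for all } a\in\R,\ p\in(0,1).$$
Put $u=-m\le 0$ and $v=a-m$; as $(a,p)$ range, these cover the relevant pairs with $u\le 0\le v$ or the reverse. Since $e^{2\beta u}(1-p)+pe^{2\beta v}=1$, the functions $l$ and $\tilde l(x):=e^{2\beta x}-1$ satisfy the same two-point zero condition with identical weights. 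Hence for all $u<0<v$,
$$\frac{l(v)}{-l(u)}=\frac{\tilde l(v)}{-\tilde l(u)}.$$
Fixing $u$ gives $l=c\,\tilde l$ on $(0,\infty)$, and fixing $v$ gives $l=c\,\tilde l$ on $(-\infty,0)$, with the same constant $c>0$. A positive multiple of $l$ defines the same risk measure, so we normalize $c=1$. The case $\beta=0$ gives linear $l$ in the same way.

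\emph{Main obstacle.} I expect the hardest step to be verifying attainment of the robust representation in some $\Q^\ast\ll\P$ for every $X$. If $l$ is not differentiable, I would use a measurable selection of the subdifferential, together with boundedness of $X$ so that $l'$ is bounded on the relevant range. That makes $\Q^\ast$ well defined and gives equality in the Fenchel inequality. Cash-additivity is built into the shortfall definition.
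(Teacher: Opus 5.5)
Your proof is correct. Like the paper, it reduces necessity to Theorem~\ref{thm:risk}; the two arguments differ in how the loss function is identified afterwards.

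\textbf{Identification of $l$.} The paper's Step~2 only checks that $l_\gamma(x)=e^{\gamma x}-1$ (resp.\ $x$) reproduces the entropic (resp.\ linear) functional. It then concludes that $\rho$ is the shortfall measure of $l_\gamma$. That shows $\rho^{l}=\rho^{l_\gamma}$, but it never shows that the given $l$ has the stated form. Your two-point argument, using $X=a\1_A$ and $\E[l(X-\rho_{0,T}(X))]=0$, supplies exactly this missing uniqueness step. To make it airtight, state the coverage explicitly. For $u<0<v$, take $a=v-u$ and note that $p\mapsto m(a,p)$ maps $(0,1)$ continuously and increasingly onto $(0,a)$. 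Also $l(u)<0$ for $u<0$ by strict monotonicity, so the ratio is well defined. Then $l/\tilde l$ is a single constant $c>0$ on $\R\setminus\{0\}$.

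\textbf{The normalization step.} The phrase ``we normalize $c=1$'' needs correcting. $l$ is given, so it cannot be normalized, and the honest conclusion is $l=c\,(e^{2\beta x}-1)$, resp.\ $l=cx$. The statement as literally written is in fact false. For example, $l(x)=2(e^{2\beta x}-1)$ and $l(x)=3x$ are strictly increasing loss functions whose shortfall measures are time-consistent (entropic and linear). So the theorem must be read up to a positive factor. That is precisely what your argument proves, and what the paper's argument does not.

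\textbf{Attainment.} You verify attainment of the robust representation explicitly, via $d\Q^\ast/d\P\propto l'(X-m)$ or a positive subgradient, which is bounded because $X$ is bounded. This is more self-contained than the paper's bare citation of Theorem~4 and Lemma~4 in Bion-Nadal. It also gives the conditional version directly, normalizing by $\E[l'(X-m_\sigma)\,|\,\Fc_\sigma]$.
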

\begin{proof}
We prove the two implications. The proof of the first implication is divided in two steps.

\medskip
\noindent \emph{\underline{Step 1:} } Assume that $\rho$ is time-consistent. The law-invariance of $\rho$ is trivial. If $X\  \stackrel{d}{=} X^{\prime}$ in $L^{\infty}(\Fc_T)$, then for all $m \in \R$, we have
$$ \E[l(X-m)] = \E[l(X^{\prime}-m)],$$
hence
$$ \rho_{0,T}(X) = \rho_{0,T}(X^{\prime}).$$
Furthermore, by Theorem 4 and Lemma 4 in Bion-Nadal \cite{BionNadal2004CMAP557} and Lemma~\ref{lem:short}, $\rho$ satisfies all the properties assumed in Theorem \ref{thm:risk} (i). Thus, there exists $\beta \in \R^+$ such that
$\rho$ is generated by the following BSDE :
\begin{align*}
\begin{cases}
-dY_t = \beta \lvert Z_t \lvert^2 \,dt - Z_t dW_t,\quad t\in[0,T)\\[1mm]
Y_T = X \in L^{\infty}(\Fc_T).
\end{cases}
\end{align*}
It remains to identify the loss function $l$ from the BSDE generator.\\

\noindent \emph{\underline{Step 2:} }
Fix $\sigma\le \tau$ and $X\in L^\infty(\Fc_\tau)$.
If $\beta=0$, then $Y_t=\E[X|\Fc_t]$, hence $\rho_{\sigma,\tau}(X)=\E[X | \Fc_\sigma]$.
For the shortfall representation, this corresponds to the linear loss $l(x)=x$.

Assume now $\beta>0$ and set $\gamma:=2\beta$.
A standard exponential transform shows that $M_t:=\exp(\gamma Y_t)$ is a martingale, hence by boundedness of the martingale,
$$
\exp(\gamma Y_\sigma)=\E\big[\exp(\gamma X)|\Fc_\sigma\big],
\qquad\text{hence}\qquad
\rho_{\sigma,\tau}(X)=Y_\sigma=\frac{1}{\gamma}\log \E\big[\exp(\gamma X)|\Fc_\sigma\big].
$$
Define
$$
l_{\gamma}(x):=\exp(\gamma x)-1.
$$
A direct computation confirms the identification: with $l_\gamma(x)=\exp(\gamma x)-1$, the shortfall formula gives
$$\rho^{l_\gamma}_{\sigma,\tau}(X)=\inf\bigl\{m:\E[e^{\gamma(X-m)}-1|\Fc_\sigma]\le 0\bigr\}=\tfrac{1}{\gamma}\log\E\bigl[e^{\gamma X}\big|\Fc_\sigma\bigr],$$
which coincides with the formula derived above. Hence $\rho$ is the dynamic shortfall risk measure associated with $l_{\gamma}$.
This proves that if $\rho$ is time-consistent, then $\rho$ is necessarily generated by a loss function of the stated exponential form (or linear form when $\beta=0$).

\medskip
\noindent The converse implication is trivial.
\end{proof}
\subsection{Cash non-additive risk measures} 
The result of Theorem~\ref{thm:risk} is specific to the normalized cash-additive setting: the interplay between law-invariance, time-consistency, normalization and cash-additivity forces the generator to be purely quadratic with a \emph{constant} coefficient, and accordingly reduces the risk measure to an entropic form. A natural question is whether this result remains true once cash-additivity is relaxed. In this subsection, we give an answer to this question. Combining Theorems~\ref{thm:main} and~\ref{thm:mainB} with a nonlinear change of variable, we obtain a complete characterization of law-invariant cash non-additive risk measures generated by BSDEs under Assumption~\textbf{(H)}. In particular, we prove that every such risk measure admits a \emph{time-dependent certainty equivalent} representation, and equivalently a \emph{generalized shortfall} representation with a joint loss function determined by the generator. This characterization extends the framework of cash non-additive risk measures introduced in \cite{di2024cash} and complements the recent generalized shortfall approach of \cite{di2026capturing}.
 
\medskip
 
Recall from Theorem~\ref{thm:mainB} that, under \textbf{(A1)}--\textbf{(A4)} and \textbf{(H)}, law-invariance forces the generator to take the form $g(t,y,z)=h(t,y)+f(t,y)|z|^2$, with $f$ solving the PDE~\eqref{eq:xyz}. By Theorem~\ref{thm:main}, the transformed generator $\tilde g$ satisfies $\tilde g(t,u,\tilde z)=k(u)|\tilde z|^2$ for some deterministic function $k:\R\to\R$. The key observation is that $k$ governs an \emph{explicit linearization}: one can construct a strictly monotone function $\psi$ that transforms the quadratic $g$-expectation $\Ec^{k(\cdot)|\tilde z|^2}$ into a linear conditional expectation.
 
\begin{Theorem}
\label{thm:cash_nonadd_charac}
Assume \textbf{(A1)}--\textbf{(A4)} and \textbf{(H)}. Let $\rho$ be a dynamic risk measure generated by the $g$-expectation $\Ec^g$.
Then the following are equivalent:

\begin{enumerate}
\item[\emph{(i)}] $\rho$ is law-invariant.

\item[\emph{(ii)}] There exists a deterministic function $
\Phi \in C^{1,2}([0,T]\times\R)$ such that, for every $t\in[0,T]$, 
$$
\rho_{t,T}(X)
=
\Phi^{-1}\Big(t,\;\E\big[\Phi(T,X)\,\big|\,\Fc_t\big]\Big),
\qquad \P\text{-a.s.},
$$
for all $X\in L^\infty(\Fc_T)$, where
$
\Phi(s,y):=\psi(v(s,y))
$
with $v$ the solution of \eqref{eq:1stPDE} and $\psi$ the unique solution of
\begin{equation} \label{eq:psi_ODE}
\psi''(u)=2\,k(u)\,\psi'(u),
\qquad
\psi(0)=0,\qquad \psi'(0)=1,
\end{equation}
where $k$ is the deterministic function associated with the transformed generator.
\end{enumerate}
\end{Theorem}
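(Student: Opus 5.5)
For (i)$\Rightarrow$(ii), we first apply Theorem~\ref{thm:mainB}. Law-invariance of $\rho=\Ec^g$ forces $g$ to be deterministic of the form $g(t,y,z)=h(t,y)+f(t,y)|z|^2$. Passing to the transformed BSDE through the change of variable $v$ of Proposition~\ref{prop:Koby-tildeg}, Theorem~\ref{thm:main} yields a deterministic $k$ with $\tilde g(t,u,\tilde z)=k(u)|\tilde z|^2$. We also use the transfer identity
$$\Ec^{g}_{t,T}[X]=v^{-1}\big(t,\Ec^{\tilde g}_{t,T}[v(T,X)]\big).$$

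It then remains to linearize $\Ec^{\tilde g}$. Let $\psi$ solve \eqref{eq:psi_ODE}, so that $\psi'(u)=\exp\big(2\int_0^u k(r)\,dr\big)>0$ and $\psi$ is strictly increasing, hence invertible on its range. Let $(U,\tilde Z)$ solve the BSDE with generator $\tilde g$, i.e. $dU_s=-k(U_s)|\tilde Z_s|^2ds+\tilde Z_s dW_s$. Itô's formula gives
$$d\psi(U_s)=\big(-\psi'(U_s)k(U_s)+\tfrac12\psi''(U_s)\big)|\tilde Z_s|^2ds+\psi'(U_s)\tilde Z_s\,dW_s,$$
and the drift vanishes by \eqref{eq:psi_ODE}. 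Since $U$ is bounded, $\psi'$ and $k$ are bounded on its range. Moreover $\int\tilde Z\,dW$ is BMO, so $\int\psi'(U)\tilde Z\,dW$ is BMO and in particular a true martingale. Hence
$$\psi(U_t)=\E[\psi(U_T)\mid\Fc_t],\qquad U_T=v(T,X).$$
Combining this with the transfer identity gives
$$\rho_{t,T}(X)=v^{-1}\big(t,\psi^{-1}(\E[\psi(v(T,X))\mid\Fc_t])\big)=\Phi^{-1}\big(t,\E[\Phi(T,X)\mid\Fc_t]\big).$$
Here $\Phi^{-1}(t,\cdot)$ denotes the inverse of the strictly monotone map $y\mapsto\Phi(t,y)$. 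Its argument lies in the range because conditional expectations of values in the (convex) range of $\psi\circ v(T,\cdot)$ evaluated on a bounded set stay in the interval range of $\psi$. The regularity $\Phi\in C^{1,2}$ follows from $v\in C^{1,2}$ (Lemma~\ref{Lemma:1stPDE}) and $\psi\in C^2$.

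For (ii)$\Rightarrow$(i), we argue directly. If $X\stackrel{d}{=}X'$, then $\Phi(T,X)\stackrel{d}{=}\Phi(T,X')$ since $\Phi$ is deterministic, so $\E[\Phi(T,X)]=\E[\Phi(T,X')]$. Applying the deterministic map $\Phi^{-1}(0,\cdot)$ gives $\rho_{0,T}(X)=\rho_{0,T}(X')$. The same argument with conditional laws gives CLI, consistent with Theorem~\ref{thm:const}.

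The main obstacle is the regularity and well-posedness of $\psi$, which requires care about what is known of $k$. Theorem~\ref{thm:main} only identifies $k(u)=\tilde g(t,u,e_1)$, which is continuous by \textbf{(A1)} and locally bounded by \textbf{(A2)}. This is enough for the linear ODE \eqref{eq:psi_ODE} in $\psi'$ to have a unique global $C^2$ solution, since the equation is linear with continuous coefficient. A second point is the martingale property on a bounded range, which we handle via the BMO estimate rather than requiring $\psi'$ to be globally bounded. Finally, we must check that the generator $\tilde g$ produced by Proposition~\ref{prop:Koby-tildeg} is indeed time-independent, as established in the iii)$\Rightarrow$i) part of the proof of Theorem~\ref{thm:mainB}, so that the single function $k$, and hence $\psi$, does not depend on $t$.
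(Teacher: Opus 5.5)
Your proof is correct. The forward direction matches the paper; the converse takes a different, more direct route.

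\textbf{Forward direction (i)$\Rightarrow$(ii).} This is essentially the paper's argument: Theorem~\ref{thm:mainB}, then Theorem~\ref{thm:main} applied to the transformed generator to get $k$, the transfer identity of Proposition~\ref{prop:Koby-tildeg}, and the It\^o linearization by $\psi$. There are two minor differences. First, the paper gets the martingale property simply because $\psi(\tilde Y)$ is a bounded continuous local martingale. This is lighter than your BMO argument, which also needs the (standard) fact that $\int\tilde Z\,dW$ is BMO. Second, your closing concern about time-independence of $\tilde g$ is unnecessary. Theorem~\ref{thm:main} already returns $k$ as a function of $u$ alone, so no appeal to the iii)$\Rightarrow$i) computation in the proof of Theorem~\ref{thm:mainB} is needed.

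\textbf{Converse (ii)$\Rightarrow$(i).} The paper argues that the representation forces the generator to be deterministic of the form $h+f|z|^2$ with $f$ solving \eqref{eq:PDEf}, and then invokes Theorem~\ref{thm:mainB}. That identification of the generator from the formula is asserted, not proved. It would need a generator-representation or uniqueness argument.

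You instead read law-invariance directly off the formula. $\Phi(T,\cdot)$ is a deterministic continuous map, $\Fc_0$ is trivial, and $\Phi^{-1}(0,\cdot)$ is a deterministic inverse. So $X\stackrel{d}{=}X'$ gives $\E[\Phi(T,X)]=\E[\Phi(T,X')]$ and hence $\rho_{0,T}(X)=\rho_{0,T}(X')$. Running the same argument with regular conditional laws yields CLI at once.

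Your route is shorter and fully rigorous, and it is more general. It uses nothing about $\psi$, $v$ or $k$ beyond measurability of $\Phi(T,\cdot)$ and invertibility of $\Phi(t,\cdot)$. So any time-dependent certainty equivalent with deterministic $\Phi$ is law-invariant, whatever its generator. The paper's route, once completed, would only add structural information about $g$. That information is not part of the conclusion here, since $\rho$ is already assumed to be generated by $\Ec^g$.
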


\begin{proof}
By Theorem~\ref{thm:mainB}, law-invariance implies that $g$ is deterministic $dt\otimes d\P$-a.e. and has the form
$$
g(t,y,z)=h(t,y)+f(t,y)|z|^2,
$$
where $f\in C^{1,1}([0,T]\times\R)$ solves \eqref{eq:xyz}. Let
$$
v\in C^{1,2}([0,T]\times\R)
$$
be the solution of \eqref{eq:1stPDE} given by Lemma~\ref{Lemma:1stPDE}. By Proposition~\ref{prop:Koby-tildeg},
$$
\Ec^g_{t,T}[X]
=
v^{-1}\Big(t,\;\Ec^{\tilde g}_{t,T}\big[v(T,X)\big]\Big),
$$
where
$$
\tilde g(t,u,\tilde z)=k(u)|\tilde z|^2
$$
for some deterministic continuous function $k:\R\to\R$.

Let $\psi$ be the unique solution of \eqref{eq:psi_ODE}. Then $\psi\in C^2(\R)$ and
$$
\psi'(u)=\exp\!\Big(2\int_0^u k(r)\,dr\Big)>0,
$$
so $\psi$ is strictly increasing. Fix $\xi\in L^\infty(\Fc_T)$ and let $(\tilde Y,\tilde Z)$ be the solution of BSDE$(T,\xi,\tilde g)$. Applying It\^o's formula, we obtain
\begin{align*}
d\psi(\tilde Y_t)
&=
\psi'(\tilde Y_t)\,d\tilde Y_t+\frac12\,\psi''(\tilde Y_t)|\tilde Z_t|^2\,dt \\
&=
\Big[-k(\tilde Y_t)\psi'(\tilde Y_t)+\frac12\,\psi''(\tilde Y_t)\Big]|\tilde Z_t|^2\,dt
+\psi'(\tilde Y_t)\tilde Z_t\,dW_t \\
&=
\psi'(\tilde Y_t)\tilde Z_t\,dW_t.
\end{align*}
Since $\tilde Y\in \Sc^\infty$, the process $\psi(\tilde Y)$ is bounded; hence it is a martingale and
$$
\psi(\tilde Y_t)=\E\big[\psi(\xi)\,\big|\,\Fc_t\big].
$$
Therefore,
$$
\Ec^{\tilde g}_{t,T}[\xi]
=
\psi^{-1}\Big(\E\big[\psi(\xi)\,\big|\,\Fc_t\big]\Big).
$$
Applying this with $\xi=v(T,X)$ and setting
$$
\Phi(s,y):=\psi(v(s,y)),
$$
we get
$$
\rho_{t,T}(X)
=
v^{-1}\Big(t,\;\psi^{-1}\big(\E[\psi(v(T,X))\,|\,\Fc_t]\big)\Big)
=
\Phi^{-1}\Big(t,\;\E\big[\Phi(T,X)\,\big|\,\Fc_t\big]\Big).
$$
Since $v(t,\cdot)$ is a $C^1$-diffeomorphism and $\psi$ is strictly increasing, $\Phi(t,\cdot)$ is strictly increasing, and the implicit characterization follows immediately.

Conversely, assume that $\rho$ admits the 
representation in (ii). Then the BSDE generator 
associated with $\rho$ is deterministic and of the 
form $g(t,y,z) = h(t,y) + f(t,y)|z|^2$ where $f$ 
solves~\eqref{eq:PDEf}; hence Theorem~\ref{thm:mainB} 
yields law-invariance. The same change of variable and the same exponential transform as above then give the stated representation.
\end{proof}
 
\begin{Remark}\label{rem:time_dep_CE}
The representation in Theorem~\ref{thm:cash_nonadd_charac}(ii) is that of a \emph{time-dependent certainty equivalent}: the ``utility'' $\Phi(s,\cdot)$ varies with $s\in[0,T]$. The time-dependence is not an artifact but is forced by the non-trivial drift $h(t,y)=g(t,y,0)$, which is precisely the mechanism producing cash non-additivity. When $h\equiv 0$, we have $v=\mathrm{id}$ and $\Phi(s,y)=\psi(y)$ is time-independent, recovering the classical certainty equivalent; cash-additivity is then restored. We note that the recent work \cite{di2026capturing} introduces $h$-generalized shortfall risk measures and shows that the $hq$-entropic risk measure belongs to this family but is \emph{not} a certainty equivalent. Our Theorem~\ref{thm:cash_nonadd_charac} clarifies this apparent discrepancy: the $hq$-entropic risk measure \emph{is} a certainty equivalent, but with respect to a \emph{time-dependent} utility $\Phi(t,\cdot)$. More importantly, Theorem~\ref{thm:cash_nonadd_charac} provides a \emph{characterization}: under Assumption~\textbf{(H)}, the generalized shortfall representation with loss $\ell(T,x;\,t,m)=\Phi(T,x)-\Phi(t,m)$ is not merely a convenient class but the \emph{only} class of law-invariant risk measures generated by BSDEs.
\end{Remark}
 
\begin{Remark}\label{rem:param_family}
Theorem~\ref{thm:cash_nonadd_charac} shows that, under Assumption~\textbf{(H)}, the space of law-invariant generators is parameterized by the pair $(h,k)$, where $h$ satisfies \textbf{(H)} and $k:\R\to\R$ is determined by the initial condition $f(0,\cdot)$ of the PDE~\eqref{eq:xyz}. The map $(h,k)\mapsto \Phi=\psi\circ v$ is explicit: $v$ is obtained by the method of characteristics (Lemma~\ref{Lemma:1stPDE}), and $\psi$ by explicitly solving \eqref{eq:psi_ODE}. 
\end{Remark}

\begin{appendix}
\setcounter{Lemma}{0}
\renewcommand{\theLemma}{\Alph{section}\arabic{Lemma}}

\setcounter{Proposition}{0}
\renewcommand{\theProposition}{\Alph{section}\arabic{Proposition}}

\setcounter{Remark}{0}
\renewcommand{\theRemark}{\Alph{section}\arabic{Remark}}

\setcounter{Corollary}{0}
\renewcommand{\theCorollary}{\Alph{section}\arabic{Corollary}}

\setcounter{Theorem}{0}
\renewcommand{\theTheorem}{\Alph{section}\arabic{Theorem}}

\setcounter{Definition}{0}
\renewcommand{\theDefinition}{\Alph{section}\arabic{Definition}}

\setcounter{Example}{0}
\renewcommand{\theExample}{\Alph{section}\arabic{Example}}

\section{Technical Results on BSDEs with quadratic growth} \label{app:A}
We first provide a toolkit theorem on BMO martingales. It will play an important role in the proofs below.

\begin{Theorem}[BMO toolkit]\label{thm:bmo_toolkit}
Fix $t\in[0,T]$. Let $\theta\in\Hc^2_{[t,T]}$ be progressively measurable and set
$$
M^{\theta}_s := \int_t^s \theta_r\,dW_r,\qquad s\in[t,T].
$$
We say that $M^{\theta}$ is a BMO martingale on $[t,T]$ if
$$
\|M^{\theta}\|^2_{\mathrm{BMO}}
<\infty.
$$
Assume that $M^{\theta}$ is BMO and define its Dol\'eans-Dade exponential
$$
L^{\theta}_s := \mathcal E\Big(\int_t^{\cdot}\theta_r\,dW_r\Big)_s
=\exp\Big(\int_t^s \theta_r\,dW_r-\frac12\int_t^s|\theta_r|^2\,dr\Big),
\qquad s\in[t,T].
$$
Then the following hold.
\begin{itemize}
\item[(i)] $L^{\theta}$ is a uniformly integrable martingale. In particular, defining $\Q$ by
$$
\frac{d\Q}{d\P}:=L^{\theta}_{T},
$$
we have $\Q\sim\P$ and the process
$$
W_s^{\Q}:=W_s-\int_t^s \theta_r\,dr,\qquad s\in[t,T],
$$
is a Brownian motion on $[t,T]$.
\item[(ii)] (Reverse H\"older) There exists $q_0>1$, depending only on $\|M^{\theta}\|_{\mathrm{BMO}}$, such that
$$
\|L^{\theta}_{T}\|_{L^{q_0}(\P)}<\infty.
$$
Equivalently, if $p_0$ denotes the conjugate exponent of $q_0$ (i.e.\ $\frac{1}{q_0}+\frac{1}{p_0}=1$), then H\"older's inequality may be applied uniformly with $L^{\theta}_{T}\in L^{q_0}(\P)$ and any random variable in $L^{p}(\P)$ for $p>p_0$.
\item[(iii)] (Energy inequality) For every integer $n\ge 1$,
$$
\E\left[\Big(\int_t^{T}|\theta_s|^2\,ds\Big)^n\right]\le n!\,\|M^{\theta}\|_{\mathrm{BMO}}^{2n},
$$
and there exists $\alpha>0$ depending on $\|M^{\theta}\|_{\mathrm{BMO}}$ such that
\begin{equation}\label{eq:exp_quadr_var}
\E\Big[\exp\Big(\alpha\int_t^T |\theta_s|^2\,ds\Big)\Big] < \infty.
\end{equation}
\end{itemize}
All statements above are standard consequences of Kazamaki's theory of BMO martingales and stochastic exponentials, see \cite[Theorems 2.2--2.3 and 3.1]{Kazamaki1994}.
\end{Theorem}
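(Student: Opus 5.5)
The plan is to deduce all three items from Kazamaki's theory. The standing hypothesis is $\|M^\theta\|_{\mathrm{BMO}}<\infty$ on $[t,T]$, and the conditional energy bound $\E[\int_\tau^T|\theta_s|^2ds\,|\,\Fc_\tau]\le \|M^\theta\|^2_{\mathrm{BMO}}$ for all stopping times $\tau\in\Tc_{t,T}$ will be used repeatedly. I would prove (iii) first, since it feeds into (i) and (ii).

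For (iii), write $A_s:=\int_t^s|\theta_r|^2dr$, a continuous increasing process, and set $c:=\|M^\theta\|^2_{\mathrm{BMO}}$. By hypothesis, $\E[A_T-A_\tau\,|\,\Fc_\tau]\le c$ for every $\tau\in\Tc_{t,T}$. Expanding $A_T^n=n\int_t^T (A_T-A_s)^{n-1}dA_s$ and conditioning, an induction on $n$ (the Garsia--Neveu / Dellacherie--Meyer energy argument) gives $\E[(A_T-A_\tau)^n\,|\,\Fc_\tau]\le n!\,c^n$. Summing the exponential series then yields $\E[e^{\alpha A_T}]\le \sum_n(\alpha c)^n<\infty$ for any $\alpha<1/c$, which is \eqref{eq:exp_quadr_var}.

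For (i), I would invoke Kazamaki's criterion in the form: if $\sup_{\tau}\E[\exp(\tfrac12(M^\theta_T-M^\theta_\tau))\,|\,\Fc_\tau]<\infty$, then $\mathcal E(M^\theta)$ is a uniformly integrable martingale. The cleanest route is the known fact that BMO martingales satisfy the John--Nirenberg inequality $\E[\exp(\lambda|M_T-M_\tau|)\,|\,\Fc_\tau]\le C$ for $\lambda$ small. Combining this with a scaling argument, namely writing $\mathcal E(M)=\mathcal E(\epsilon M)^{1/\epsilon}\cdot(\text{correction})$ as in Kazamaki's Theorem 2.3, gives the result for all BMO norms. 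Once $L^\theta$ is a true martingale with $L^\theta_T>0$, equivalence $\Q\sim\P$ follows, and Girsanov's theorem shows that $W^\Q$ is a $\Q$-Brownian motion on $[t,T]$.

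For (ii), the reverse Hölder inequality $R_{q_0}$, i.e. $\E[(L_T/L_\tau)^{q_0}\,|\,\Fc_\tau]\le C$ for some $q_0>1$ depending only on the BMO norm, is Kazamaki's Theorem 3.1. Its proof shows that $\Phi(q)=(1+\frac{1}{q^2}\log\frac{2q-1}{2(q-1)})^{1/2}-1$ exceeds $\|M\|_{\mathrm{BMO}}$ for $q$ near $1$. The Hölder consequence is then immediate. The main obstacle is (i)--(ii) for large BMO norm, since a naive Novikov bound fails there. Rather than reproving this, I would simply cite \cite{Kazamaki1994}, which is what the statement indicates.
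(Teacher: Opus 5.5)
Your proposal is correct and takes the same approach as the paper, which gives no argument beyond the citation to Kazamaki. Your Garsia--Neveu induction for (iii) is the standard energy-inequality proof, and deferring (i)--(ii) to Kazamaki's Theorems 2.3 and 3.1 (using $\Phi(q)\uparrow\infty$ as $q\downarrow 1$) is exactly what the paper does. One caveat applies if you ever write out your scaling remark in (i): the identity $\mathcal E(\epsilon M)=\mathcal E(M)^{\epsilon}\exp\bigl(\tfrac{\epsilon(1-\epsilon)}{2}\langle M\rangle\bigr)$, combined with H\"older and the conditional energy bound, only gives a uniform lower bound $\E[L^\theta_T\,|\,\Fc_\tau]\ge c\,L^\theta_\tau$ with $c\in(0,1]$, so you would also need the short lemma that such a bound forces a positive continuous local martingale to be uniformly integrable.
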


For the sake of completeness, we present here a differentiability result of the BSDE of quadratic growth with respect to the terminal condition. Note that the following result was obtained under a strong structural condition on the generator in \cite{ankirchner2007classical} and a slightly weaker assumption than ours in \cite{imkeller2024differentiability}. Indeed, the assumptions in \cite{ankirchner2007classical} 
require $\partial_y g$ to be bounded, and those in 
\cite{imkeller2024differentiability} require growth at most $|z|^\alpha$ with $\alpha < 1$; both exclude the range permitted by Assumption~\textbf{(A4$^*$)}, which is needed here since the structure of the generator is unknown at this stage (see the discussion in the Introduction). The following proposition generalizes the study of linear BSDEs with stochastic Lipschitz generators of the type:
\begin{equation}\label{eq:lin_BSDE_general}
-dU_s = \big(\phi_s + \gamma_s U_s + \beta_s V_s\big)ds - V_s dW_s,
\qquad s\in[t,T),\qquad U_{T}=X \in L^{\infty}(\Fc_{T}),
\end{equation}
presented in \cite{ankirchner2007classical}. In our setting, we do not assume that $\gamma$ is bounded but make the following assumptions. Fix $t\in[0,T]$. We say that a triple $(\phi,\gamma,\beta)$ satisfies \textbf{(L)} on $[t,T]$ if:
\begin{itemize}\setlength\itemsep{4pt}
\item $\int_t^{\cdot}\beta_s\,dW_s$ is a BMO martingale, with $q_0>1$ from the reverse H\"older inequality and $p_0$ its conjugate;
\item for all $p>p_0$,
$$
\E\Big[\sup_{s\in[t,T]}\exp\Big(\pm p\int_t^{s}\gamma_r\,dr\Big)\Big]<\infty;
$$
\item for any $p\ge 2$,
$$
\E\Big[\Big(\int_t^T|\phi_s|\,ds\Big)^p\Big]<\infty.
$$
\end{itemize}
\begin{Proposition}\label{prop:linear_BSDE_general}
Fix $t\in[0,T]$ and assume that $(\phi,\gamma,\beta)$ satisfies \textbf{(L)} on $[t,T]$.
Then, for any $X\in L^\infty(\Fc_T)$, the linear BSDE
$$
-dU_s=\big(\phi_s+\gamma_sU_s+\beta_sV_s\big)\,ds - V_s\,dW_s,
\qquad s\in[t,T),\qquad U_T=X,
$$
admits a unique solution $(U,V)$, and for every integer $n\ge1$ it holds that
$(U,V)\in\Sc^{2n}\times\Hc^{2n}$.
\end{Proposition}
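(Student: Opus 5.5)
We propose to prove Proposition~\ref{prop:linear_BSDE_general} by first removing the linear terms in $U$ and $V$ through an explicit integrating factor and a change of measure, and then transferring integrability back to $\P$ using the reverse H\"older inequality and the exponential moment bounds in \textbf{(L)}.

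First we define the discount process
$$
e_s:=\exp\Big(\int_t^s\gamma_r\,dr\Big),\qquad s\in[t,T].
$$
We also introduce the measure $\Q$ with density $L^\beta_T$, the Dol\'eans-Dade exponential of $\int_t^\cdot\beta_r\,dW_r$. This measure is well defined and equivalent to $\P$ by Theorem~\ref{thm:bmo_toolkit}(i), and $W^\Q_s := W_s-\int_t^s\beta_r\,dr$ is a $\Q$-Brownian motion. The candidate solution is
$$
U_s:=e_s^{-1}\,\E^\Q\Big[e_TX+\int_s^T e_r\phi_r\,dr\,\Big|\,\Fc_s\Big].
$$
Integrability of the quantity inside the conditional expectation under $\Q$ follows from H\"older's inequality. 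We split $\E[L^\beta_T\,\xi]\le \|L^\beta_T\|_{L^{q_0}}\|\xi\|_{L^{p}}$ with $p>p_0$. Then $\|e_TX\|_{L^p}$ is controlled by the exponential moment assumption in \textbf{(L)}. For the other term, $\int_t^T e_r|\phi_r|\,dr\le \sup_s e_s\int_t^T|\phi_r|\,dr$, and Cauchy--Schwarz together with the two moment conditions in \textbf{(L)} controls it.

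Next we construct $V$. The $\Q$-martingale $N_s:=\E^\Q[\cdots|\Fc_s]$ is a martingale in the filtration of $W$, and $W^\Q$ generates the same filtration up to Girsanov. The martingale representation theorem under $\Q$ with respect to $W^\Q$ (the Brownian filtration is preserved under equivalent change) gives a process $\tilde V$ with $dN_s=\tilde V_s\,dW^\Q_s$. We then set $V_s:=e_s^{-1}\tilde V_s$. Applying It\^o's product rule to $e_s^{-1}N_s$ and rewriting $dW^\Q=dW-\beta\,ds$ shows that $(U,V)$ solves the linear BSDE.

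For uniqueness, we take the difference of two solutions lying in $\Sc^{2n}\times\Hc^{2n}$. This difference solves the homogeneous equation with zero terminal value. Applying the same transform, $e_s\delta U_s$ is a $\Q$-local martingale. It is a true martingale because $\sup|e\,\delta U|\in L^p(\P)$ for some $p>p_0$ (Cauchy--Schwarz with the exponential moments) and $L^\beta_T\in L^{q_0}$. Hence $e_s\delta U_s=0$, and then $\delta V=0$.

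The main obstacle is the estimate $(U,V)\in\Sc^{2n}\times\Hc^{2n}$ under $\P$, since $U$ is written through a $\Q$-conditional expectation. For $U$ we plan to use the Bayes formula,
$$
\E^\Q[\xi|\Fc_s]=\E\big[(L^\beta_T/L^\beta_s)\,\xi\,\big|\,\Fc_s\big].
$$
Together with conditional reverse H\"older (Kazamaki: $\E[(L_T/L_s)^{q_0}|\Fc_s]\le C$), this gives $|N_s|\le C\,\E[|\xi|^{p}|\Fc_s]^{1/p}$. Doob's inequality in $L^{2n/p}$, choosing $p>p_0$ with $2n>p$, then yields $\sup_s|N_s|\in L^{2n}$ for every $n$. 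Since $\sup_s e_s^{-1}$ has all moments, another H\"older step gives $U\in\Sc^{2n}$.

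For $V$ we plan to apply It\^o's formula to $|U|^2$ and use BDG. This produces
$$
\int|V|^2\le C\Big(\sup|U|^2+\sup|U|\int|\phi|+\sup|U|^2\int|\gamma|+\sup|U|\int|\beta||V|+\Big|\int UV\,dW\Big|\Big).
$$
We absorb the term involving $\beta$ via Young's inequality, $\sup|U|\int|\beta||V|\le C\sup|U|^2\int|\beta|^2+\tfrac14\int|V|^2$. The quantities $\int|\beta|^2$ and $\int|\gamma|$ have all moments, by the energy inequality of Theorem~\ref{thm:bmo_toolkit}(iii) and by $e^{|x|}\le e^x+e^{-x}$ with \textbf{(L)}. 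Taking $n$-th powers and applying H\"older then closes the estimate, provided we first localize $V$ by stopping times to justify the absorption.
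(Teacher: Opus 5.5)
\paragraph{Gap: the $V$-estimate needs moments of $\int_t^T|\gamma_r|\,dr$.} Several parts of your proposal are sound. The construction (discount factor $e$, measure $\Q$ with density $L^\beta_T$, representation of $N$ under $\Q$) is the paper's. Your $\Sc^{2n}$ bound for $U$ via Bayes' formula, the conditional reverse H\"older inequality and Doob under $\P$ is correct. Your uniqueness argument also works, and is more careful than the paper's Step~5, which takes the integrability of an arbitrary second solution from Step~3.

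The problem is the $\Hc^{2n}$ estimate for $V$. It\^o's formula for $|U|^2$ produces $2\int_t^T\gamma_sU_s^2\,ds$, which you bound by $2\sup|U|^2\int_t^T|\gamma_s|\,ds$. You then claim that $\int_t^T|\gamma_s|\,ds$ has all moments ``by $e^{|x|}\le e^x+e^{-x}$ with \textbf{(L)}''. Applied to $x=\int_t^s\gamma_r\,dr$, that inequality only controls $\sup_s\big|\int_t^s\gamma_r\,dr\big|$. \textbf{(L)} says nothing about the total variation $\int_t^T|\gamma_r|\,dr$, nor about $\int\gamma^+$.

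Here is a counterexample. Take $\beta=\phi=0$, pick $t'\in(t,T)$ and $A\ge0$ that is $\Fc_{t'}$-measurable with $\E[A]=\infty$, and set $\gamma_r:=A\cos\big(A(r-t')\big)\1_{\{r>t'\}}$. Then \textbf{(L)} holds, since $\int_t^s\gamma_r\,dr=\sin\big(A(s-t')^+\big)\in[-1,1]$. However $\int_t^T|\gamma_r|\,dr\ge\frac{2}{\pi}A(T-t')-2$ is not even integrable. So under the stated hypotheses your estimate for $\int_t^T|V_s|^2\,ds$ does not close. In the paper's application (Lemma~\ref{lem:coeff_satisfy_L}) one does have $|\gamma|\le h_\eps+\eps|Z|^2$, but the Proposition assumes only \textbf{(L)}.

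\paragraph{Repair.} Discount before applying It\^o. Set $\hat U:=eU=N-\int_t^{\cdot}e_r\phi_r\,dr$ and $\hat V:=eV=\tilde V$. These solve $-d\hat U_s=(e_s\phi_s+\beta_s\hat V_s)\,ds-\hat V_s\,dW_s$, which has no $\gamma$-term. Your Bayes/Doob step already gives all $\P$-moments of $\sup_s|\hat U_s|$. It\^o for $|\hat U|^2$, Young on the $\beta$-term, BDG and localization then give all moments of $\int_t^T|\hat V_s|^2\,ds$. Finally, $\int_t^T|V_s|^2\,ds\le\sup_{s}e_s^{-2}\int_t^T|\hat V_s|^2\,ds$ together with H\"older concludes.

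This is in substance the paper's route, except that the paper stays under $\Q$. It applies BDG to the $\Q$-martingale $M$ (your $N$) to get $\tilde V=\Gamma V\in\Hc^{2n}(\Q)$, divides by $\Gamma$, and returns to $\P$ through the reverse H\"older inequality for $d\P/d\Q=L_T^{-1}$ under $\Q$. It therefore never needs moments of $\int|\gamma|$. Your version replaces that final change of measure with a $\P$-side a priori estimate, which is fine once $\gamma$ has been discounted away.

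\paragraph{Minor point.} $W^\Q$ need not generate $\F$. The representation step should instead be justified by the fact that every $(\Q,\F)$-martingale $N$ is a stochastic integral against $W^\Q$. This follows from representing $NL^\beta$ and $L^\beta$ under $\P$ and applying It\^o's formula to the quotient.
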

\begin{proof}
Let $q_0>1$ be given by \textbf{(L)} for the BMO martingale $\int_t^{\cdot}\beta_s\,dW_s$ and let $p_0$ be its conjugate exponent,
$\frac{1}{q_0}+\frac{1}{p_0}=1$. Define the Dol\'eans exponential and the measure $\Q$ by
$$
L_s:=\mathcal E\Big(\int_t^{T}\beta_r\,dW_r\Big)_s,
\qquad s\in[t,T],
\qquad
\frac{d\Q}{d\P}:=L_T.
$$
By Theorem~\ref{thm:bmo_toolkit}, $\Q\sim\P$, $L_T\in L^{q_0}(\P)$, and
$$
W_s^\Q:=W_s-\int_t^s\beta_r\,dr,\qquad s\in[t,T],
$$
is a Brownian motion under $\Q$. Rewriting \eqref{eq:lin_BSDE_general} in terms of $W^\Q$ yields
\begin{equation}\label{eq:lin_BSDE_Q_phi}
-dU_s=\big(\phi_s+\gamma_sU_s\big)\,ds - V_s\,dW_s^\Q,
\qquad s\in[t,T),\qquad U_T=X.
\end{equation}

\smallskip
\noindent\emph{\underline{Step 1:} Integrability of the terminal functional.}
Set
$$
\Gamma_{u,s}:=\exp\Big(\int_u^s\gamma_r\,dr\Big),
\qquad t\le u\le s\le T.
$$
Fix $r_0>1$. By \textbf{(L)}, there exists $p>p_0$ such that
$$
\E\Big[\sup_{s\in[t,T]}\Gamma_{t,s}^{\pm p}\Big]<\infty.
$$
H\"older's inequality and the identity $\E^\Q[\cdot]=\E[L_T\,\cdot]$ yield
\begin{align*}
\E^\Q\big[|\Gamma_{t,T}X|^{r_0}\big]
&\le \|X\|_\infty^{r_0}\,\E^\P\big[L_T\,\Gamma_{t,T}^{r_0}\big]
\le \|X\|_\infty^{r_0}\,\|L_T\|_{L^{q_0}(\P)}\,\|\Gamma_{t,T}\|_{L^{r_0p_0}(\P)}^{r_0},
\end{align*}
and similarly
\begin{align*}
\E^\Q\Big[\Big|\int_t^T\Gamma_{t,s}\phi_s\,ds\Big|^{r_0}\Big]
&\le \E^\Q\Big[\Big(\sup_{s\in[t,T]}\Gamma_{t,s}\int_t^T|\phi_s|\,ds\Big)^{r_0}\Big]\\
&\le \E^\P\Big[L_T\Big(\sup_{s\in[t,T]}\Gamma_{t,s}^{r_0}\Big)
\Big(\int_t^T|\phi_s|\,ds\Big)^{r_0}\Big]\\
&\le \|L_T\|_{L^{q_0}(\P)}\,
\Big\|\sup_{s\in[t,T]}\Gamma_{t,s}\Big\|_{L^{2r_0p_0}(\P)}^{r_0}\,
\Big\|\int_t^T|\phi_s|\,ds\Big\|_{L^{2r_0p_0}(\P)}^{r_0}.
\end{align*}
Thus, for any $r_0 > 1$, using \textbf{(L)} ($2r_0p_0 > p_0$ and $2r_0p_0 > 2$) shows that
$$
\Gamma_{t,T}X+\int_t^T\Gamma_{t,s}\phi_s\,ds\in L^{r_0}(\Q).
$$

\smallskip
\noindent\emph{\underline{Step 2:} Construction under $\Q$.}
Define the $\Q$--martingale
$$
M_s:=\E^\Q\!\Big[\Gamma_{t,T}X+\int_t^T\Gamma_{t,r}\phi_r\,dr\ \Big|\ \Fc_s\Big],
\qquad s\in[t,T].
$$
By Step~1, $M$ is uniformly integrable and belongs to $\Sc^{r_0}(\Q)$.
Hence, by the martingale representation theorem under $\Q$, there exists a predictable process
$\tilde V\in\Hc^{r_0}(\Q)$ such that
$$
M_s=M_t+\int_t^s\tilde V_r\,dW_r^\Q,\qquad s\in[t,T].
$$
We now set, for $s\in[t,T]$,
$$
U_s:=\Gamma_{t,s}^{-1}\Big(M_s-\int_t^s\Gamma_{t,r}\phi_r\,dr\Big),
\qquad
V_s:=\Gamma_{t,s}^{-1}\tilde V_s.
$$
An application of It\^o's formula yields
$$
-dU_s=\big(\phi_s+\gamma_sU_s\big)\,ds - V_s\,dW_s^\Q,\qquad s\in[t,T),
\qquad U_T=X,
$$
so that $(U,V)$ solves \eqref{eq:lin_BSDE_Q_phi} under $\Q$.

\smallskip
\noindent\emph{\underline{Step 3:} Integrability under $\Q$.}
Let $n_0\ge1$ and take $r_0:=4n_0$. By Step~1 and the definition of $U$,
$$
\sup_{s\in[t,T]}|U_s|
\le \Big(\sup_{s\in[t,T]}\Gamma_{t,s}^{-1}\Big)
\Big(\sup_{s\in[t,T]}|M_s|+\int_t^T\Gamma_{t,r}|\phi_r|\,dr\Big).
$$
Using Doob's inequality for $M$ and Young's inequality, together with Step~1 and \textbf{(L)},
we obtain that $U\in\Sc^{2n_0}(\Q)$. Moreover, since $M$ is represented by $\tilde V$,
the Burkholder--Davis--Gundy inequality implies $\tilde V\in\Hc^{2n_0}(\Q)$, and hence
$V=\Gamma_{t,\cdot}^{-1}\tilde V$ belongs to $\Hc^{2n_0}(\Q)$ by \textbf{(L)} and H\"older's inequality.
Therefore $(U,V)\in\Sc^{2n_0}(\Q)\times\Hc^{2n_0}(\Q)$ for all $n_0 \geq 1$.

\smallskip
\noindent\emph{\underline{Step 4:} Back to $\P$.}
By Theorem~\ref{thm:bmo_toolkit}, possibly after taking a smaller $q_0>1$, we also have
$L_T^{-1}=d\P/d\Q\in L^{q_0}(\Q)$. Let $p_0$ be the conjugate exponent. Then, by H\"older's inequality,
\begin{align*}
\E^\P\Big[\sup_{t\le s\le T}|U_s|^{2n_0}\Big]
&=\E^\Q\Big[L_T^{-1}\sup_{t\le s\le T}|U_s|^{2n_0}\Big]\\
&\le \|L_T^{-1}\|_{L^{q_0}(\Q)}
\Big(\E^\Q\big[\sup_{t\le s\le T}|U_s|^{2n_0p_0}\big]\Big)^{1/p_0}<\infty,
\end{align*}
and similarly $V\in\Hc^{2n_0}$. Hence $(U,V)\in\Sc^{2n_0}\times\Hc^{2n_0}$ under $\P$.

\smallskip
\noindent\emph{\underline{Step 5:} Uniqueness.}
Assume that $(U^1,V^1)$ and $(U^2,V^2)$ are two solutions of \eqref{eq:lin_BSDE_general}. Set $\delta U:=U^1-U^2$ and $\delta V:=V^1-V^2$. Then $(\delta U,\delta V)$ solves
\begin{equation}\label{eq:lin_BSDE_hom}
-d\delta U_s=\big(\gamma_s\delta U_s+\beta_s\delta V_s\big)\,ds-\delta V_s\,dW_s,
\qquad s\in[t,T),\qquad \delta U_T=0.
\end{equation}
Applying the same Girsanov change of measure as above, \eqref{eq:lin_BSDE_hom} becomes under $\Q$,
$$
-d\delta U_s=\gamma_s\delta U_s\,ds-\delta V_s\,dW_s^\Q,
\qquad s\in[t,T),\qquad \delta U_T=0.
$$
Multiplying by $\Gamma_{t,s}$ and applying It\^o's formula yields that the process
$$
\Gamma_{t,s}\delta U_s,\qquad s\in[t,T],
$$
is a $\Q$--local martingale with terminal value $0$. Since $\delta U\in\Sc^{2}(\Q)$ by Step~3,
it is a true $\Q$--martingale. Therefore, for all $s\in[t,T]$,
$$
\Gamma_{t,s}\delta U_s=\E^\Q[\Gamma_{t,T}\delta U_T\mid\Fc_s]=0,
$$
hence $\delta U\equiv0$, and then $\delta V\equiv0$ by \eqref{eq:lin_BSDE_hom}.
\end{proof}
\begin{Lemma}\label{lem:coeff_satisfy_L}
Assume \textbf{(A1)}--\textbf{(A4$^*$)}. Fix $t\in[0,T]$.
Let $X\in L^\infty(\Fc_T)$ and let $(Y,Z)$ be the solution of $\mathrm{BSDE}(T,X,g)$.
Define on $[t,T]$,
$$
\gamma_s:=\partial_y g(s,Y_s,Z_s),
\qquad
\beta_s:=\partial_z g(s,Y_s,Z_s).
$$
Then $\int_t^{\cdot}\beta_s\,dW_s$ is a BMO martingale on $[t,T]$, and there exists $p_0\in\N$ such that
for all $p>p_0$,
$$
\E\Big[\sup_{s\in[t,T]}\exp\Big(\pm p\int_t^s\gamma_r\,dr\Big)\Big]<\infty.
$$
In particular, for any progressively measurable $\phi$ satisfying
$$
\E\Big[\Big(\int_t^T|\phi_s|\,ds\Big)^p\Big]<\infty,\qquad \forall\,p\ge2,
$$
the triple $(\phi,\gamma,\beta)$ satisfies \textbf{(L)} on $[t,T]$.
\end{Lemma}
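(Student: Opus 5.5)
The plan has three parts, handled in order: first the BMO property of $\beta$, then the exponential moment bound on $\int\gamma$, and finally the \textbf{(L)} conclusion.

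\emph{BMO property of $\beta$.} Since $X\in L^\infty(\Fc_T)$, the solution satisfies $Y\in\Sc^\infty$ by Kobylanski's theory. The classical quadratic-BSDE energy estimate then gives $\int_0^\cdot Z_s\,dW_s\in\mathrm{BMO}$. We would recall this explicitly: apply It\^o's formula to $\varphi(Y_s)$ with $\varphi(y)=(e^{2c y}-1-2cy)/(4c^2)$ or $e^{c Y}$, where $c$ is large relative to $\ell(\|Y\|_{\Sc^\infty})$. Using \textbf{(A2)}, this yields
$$
\E\Big[\int_\tau^T|Z_s|^2\,ds\,\Big|\,\Fc_\tau\Big]\le C\big(\|X\|_\infty,\kappa,\ell,T\big)
$$
for every $\tau\in\Tc_{t,T}$. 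By \textbf{(A3)} and the boundedness of $Y$, we have $|\beta_s|\le \ell(\|Y\|_{\Sc^\infty})(1+|Z_s|)$, hence $|\beta_s|^2\le 2\ell^2(1+|Z_s|^2)$. The BMO norm of $\int_t^\cdot\beta_s\,dW_s$ is therefore bounded by a constant times $(T+\|Z\|^2_{\mathrm{BMO}})^{1/2}$. Theorem~\ref{thm:bmo_toolkit} then supplies $q_0>1$ and its conjugate $p_0$; enlarging $p_0$, we may take $p_0\in\N$.

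\emph{Exponential moment of $\int\gamma$.} By \textbf{(A4$^*$)}, for every $\eps>0$,
$$
\sup_{s\in[t,T]}\Big|\int_t^s\gamma_r\,dr\Big|\le \int_t^T|\gamma_r|\,dr\le \|h_\eps\|_{L^1}+\eps\int_t^T|Z_r|^2\,dr,
$$
so that
$$
\sup_{s}\exp\Big(\pm p\int_t^s\gamma_r\,dr\Big)\le e^{p\|h_\eps\|_{L^1}}\exp\Big(p\eps\int_t^T|Z_r|^2\,dr\Big).
$$
By the energy inequality \eqref{eq:exp_quadr_var} of Theorem~\ref{thm:bmo_toolkit}(iii), applied to $\theta=Z$, there is $\alpha>0$ depending only on $\|Z\|_{\mathrm{BMO}}$ with $\E[\exp(\alpha\int_t^T|Z_r|^2dr)]<\infty$. (John--Nirenberg gives any $\alpha<1/\|Z\|^2_{\mathrm{BMO}}$.) For a given $p$, we choose $\eps:=\alpha/p$. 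The right-hand side is then integrable, which holds for all $p$ and in particular for all $p>p_0$.

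\emph{The \textbf{(L)} conclusion.} This is immediate: the $\phi$-condition is assumed, and the other two requirements have just been verified.

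The main obstacle is the first part. One must make sure that the BMO bound on $Z$ holds on $[t,T]$ with constants independent of $\tau$, and that \textbf{(A3)} is used with $y$ restricted to the bounded range of $Y$, so that $\ell(|Y_s|)$ is uniformly bounded. The essential point of the second part is that $\eps$ in \textbf{(A4$^*$)} can be chosen arbitrarily small. This is exactly what allows the growth of $\partial_y g$ up to $|z|^2$ to be absorbed by the exponential integrability of BMO quadratic variations, and it is the place where \textbf{(A4$^*$)}, rather than boundedness of $\partial_y g$, is needed.
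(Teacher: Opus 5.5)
Your proposal is correct and follows essentially the same route as the paper. The BMO property of $\int_t^\cdot\beta_s\,dW_s$ comes from the BMO property of $\int Z\,dW$ (Kobylanski) combined with \textbf{(A3)} on the bounded range of $Y$. The exponential moments of $\int\gamma$ come from \textbf{(A4$^*$)} with $\eps$ small enough to be absorbed by the exponential integrability of $\int_t^T|Z_r|^2\,dr$ (your $\eps=\alpha/p$ plays the role of the paper's $r_0p_0\eps\le\alpha$). Your sketch of the energy estimate and your remark that $p_0$ may be enlarged to an integer (which only shrinks $q_0$ and so preserves reverse H\"older) supply details the paper leaves implicit.
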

\begin{proof}
\emph{\underline{Step 1:} BMO control for $\int \beta\,dW$.}
By Kobylanski \cite{Kobylanski2000}, $Y\in\Sc^{\infty}$ and $\int_t^{\cdot} Z_s\,dW_s$ is a BMO martingale on $[t,T]$.
Under \textbf{(A3)}, there exists an increasing continuous function $\ell:\R^+\to\R^+$ such that
$$
|\beta_s|=\big|\partial_z g(s,Y_s,Z_s)\big|
\le \ell(|Y_s|)\big(1+|Z_s|\big),\qquad dt\otimes d\P\text{-a.e. on }[t,T].
$$
Since $Y$ is essentially bounded, $\ell(|Y_s|)$ is bounded by a deterministic constant, and hence for all $t\le u\le T$,
$$
\int_u^{T}|\beta_s|^2\,ds
\le C\int_u^{T}\big(1+|Z_s|^2\big)\,ds<\infty,\qquad \P\text{-a.s.},
$$
for some constant $C>0$. Therefore, Theorem~\ref{thm:bmo_toolkit} implies that the martingale
$$
M_u^{\beta}:=\int_t^u \beta_s\,dW_s,\qquad u\in[t,T],
$$
is BMO. In particular, its Dol\'eans-Dade exponential
$$
\mathcal E\Big(\int_t^{T}\beta_s\,dW_s\Big)_u
=\exp\Big(\int_t^u\beta_s\,dW_s-\tfrac12\int_t^u|\beta_s|^2\,ds\Big),\qquad u\in[t,T],
$$
is a uniformly integrable martingale and satisfies a reverse H\"older inequality: there exists $q_0>1$ such that
$$
\E\Big[\Big(\frac{d\Q}{d\P}\Big)^{q_0}\Big]<\infty,
\qquad
\frac{d\Q}{d\P}
:=\mathcal E\Big(\int_t^{T}\beta_s\,dW_s\Big).
$$

\smallskip
\noindent\emph{\underline{Step 2:} control of the exponential of $\int\gamma$.}
By \textbf{(A4$^*$)} applied to $(Y,Z)$, for any $\eps>0$ there exists a deterministic function $h_\eps\in L^1([0,T])$ such that
$$
|\gamma_s|
=\big|\partial_y g(s,Y_s,Z_s)\big|
\le h_\eps(s)+\eps|Z_s|^2,
\qquad dt\otimes d\P\text{-a.e.}.
$$
Therefore, for any $t\le s\le u\le T$,
\begin{equation}\label{eq:gamma_bound_lemma}
\int_s^{u}|\gamma_r|\,dr
\le H_\eps+\eps\int_t^{T}|Z_r|^2\,dr,
\end{equation}
where we set
$$
H_\eps:=\int_0^T h_\eps(r)\,dr<\infty.
$$
Moreover, since $\int_t^{\cdot} Z_s\,dW_s$ is BMO, Theorem~\ref{thm:bmo_toolkit} yields the existence of $\alpha>0$ such that
\begin{equation}\label{eq:exp_quadr_var_lemma}
\E\Big[\exp\Big(\alpha\int_t^{T}|Z_r|^2\,dr\Big)\Big]<\infty.
\end{equation}
Let $p_0$ be the conjugate exponent of $q_0$ defined in Step~1, and fix $r_0>1$. Choose $\eps>0$ small enough such that
$r_0p_0\eps\le\alpha$. Combining \eqref{eq:gamma_bound_lemma} and \eqref{eq:exp_quadr_var_lemma} yields
\begin{align*}
\E\Big[\sup_{s\in[t,T]}\exp\Big(\pm r_0p_0\int_t^{s}\gamma_r\,dr\Big)\Big]
&\le \E\Big[\exp\Big(\pm r_0p_0H_\eps+r_0p_0\eps\int_t^{T}|Z_r|^2\,dr\Big)\Big]\\
&\le \exp(\pm r_0p_0H_\eps)\;
\E\Big[\exp\Big(\alpha\int_t^{T}|Z_r|^2\,dr\Big)\Big]
<\infty.
\end{align*}
Hence,
$$
\E\Big[\sup_{s\in[t,T]}\exp\Big(\pm r_0p_0\int_t^{s}\gamma_r\,dr\Big)\Big]<\infty.
$$
In particular, for all $p>p_0$,

$$
\Gamma_{t,T}
:=\exp\Big(\int_t^{T}\gamma_r\,dr\Big)\in L^{p}(\P),
\qquad
\sup_{s\in[t,T]}\Gamma_{t,s}^{-1}
:=\exp\Big(-\int_t^{s}\gamma_r\,dr\Big)\in L^{p}(\P).
$$
This proves that $(\gamma,\beta)$ satisfies \textbf{(L)} on $[t,T]$. The last statement follows immediately by the
assumption on $\phi$.
\end{proof}
\medskip

\begin{Theorem}[Differentiability] \label{thm:diff}
Assume \textbf{(A1)}–\textbf{(A4$^{*}$)} hold, then for any $t \in [0,T]$, $y \in \R$, and $X \in L^{\infty} \left ( \Fc_{T} \right)$ it holds that 
$$  \lim_{\eps \to 0^+} \frac{1}{\eps} \left( \Ec^g_{t, T} \left [ \eps X + y \right] - \Ec^g_{t, T} \left [ y \right] \right) = U^X_t, \quad \text{in } L^1(\P),$$
where $(U^X, V^X)$ is the unique solution of the following linear BSDE
\begin{align*}
    \begin{cases}
         -d U_s^X = \left(\partial_y g(s, Y_s^y, Z_s^y) U_s^X + \partial_z g(s, Y_s^y, Z_s^y) V_s^X   \right) ds -V_s^X dW_s, ~~ s \in [t, T), \\
        U_{T}^X = X,
    \end{cases}
\end{align*}
   and $(Y^y, Z^y)$ is the solution of BSDE$(T, y, g)$. 
\end{Theorem}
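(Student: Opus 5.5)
The natural route is the standard difference-quotient argument for quadratic BSDEs, with the paper's BMO toolkit and Vitali's theorem replacing dominated convergence. Fix $t$, $y$, $X$ and $\eps\in(0,1]$. Let $(Y^\eps,Z^\eps)$ solve BSDE$(T,y+\eps X,g)$ and $(Y^y,Z^y)$ solve BSDE$(T,y,g)$. Set
$$\delta Y^\eps:=\frac{Y^\eps-Y^y}{\eps},\qquad \delta Z^\eps:=\frac{Z^\eps-Z^y}{\eps}.$$

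\emph{Step 1: linearization and uniform bounds.} By the mean value theorem (using (A4$^*$)), $(\delta Y^\eps,\delta Z^\eps)$ solves the linear BSDE with terminal value $X$ and coefficients
$$\gamma^\eps_s:=\int_0^1\partial_y g\big(s,Y^y_s+\lambda(Y^\eps_s-Y^y_s),Z^y_s+\lambda(Z^\eps_s-Z^y_s)\big)\,d\lambda,$$
with $\beta^\eps_s$ defined analogously with $\partial_z g$. By Kobylanski's estimates, $\sup_{\eps}\|Y^\eps\|_{\Sc^\infty}<\infty$ and $\sup_\eps\|Z^\eps\|_{\mathrm{BMO}}<\infty$. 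Hence, as in Lemma~\ref{lem:coeff_satisfy_L}, (A3) gives a uniform BMO bound on $\int\beta^\eps dW$, and (A4$^*$) gives $|\gamma^\eps_s|\le h_{\eta}(s)+2\eta(|Z^y_s|^2+|Z^\eps_s|^2)$. The exponential energy inequality of Theorem~\ref{thm:bmo_toolkit}(iii), whose constant $\alpha$ depends only on the BMO norm, then shows that $(0,\gamma^\eps,\beta^\eps)$ satisfies (L) with constants uniform in $\eps$. The explicit representation from Proposition~\ref{prop:linear_BSDE_general} gives
$$\delta Y^\eps_t=\E\big[\Gamma^\eps_{t,T}X\,\big|\,\Fc_t\big],\qquad \Gamma^\eps:=\exp\Big(\int\gamma^\eps\,dr\Big)\,\mathcal E\Big(\int\beta^\eps\,dW\Big),$$
and similarly $U^X_t=\E[\Gamma_{t,T}X\,|\,\Fc_t]$ with the unperturbed coefficients. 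The uniform reverse Hölder and exponential moments then bound $\sup_\eps\E[|\Gamma^\eps_{t,T}|^{r}]$ for some $r>1$, so $\{\Gamma^\eps_{t,T}X\}_\eps$ is uniformly integrable.

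\emph{Step 2: convergence of the coefficients.} Stability for quadratic BSDEs shows that $Y^\eps\to Y^y$ in $\Sc^\infty$; indeed $|Y^\eps-Y^y|\le C\eps$ by comparison and the linearization itself. It also shows that $Z^\eps\to Z^y$ in $\Hc^2$, which follows from Itô's formula applied to $|Y^\eps-Y^y|^2$ (or to an exponential transform) together with the BMO bounds. Along any subsequence we can pass to a further subsequence along which $Z^\eps\to Z^y$ $dt\otimes d\P$-a.e. By continuity of $\partial_y g$ and $\partial_z g$ in (A4$^*$), this gives $\gamma^\eps\to\gamma$ and $\beta^\eps\to\beta$ a.e. Uniform integrability, from the uniform $|z|^2$ and $(1+|z|)$ growth bounds combined with the energy inequality, upgrades this to
$$\int_t^T|\gamma^\eps_s-\gamma_s|\,ds\to0\quad\text{and}\quad\int_t^T|\beta^\eps_s-\beta_s|^2\,ds\to0\quad\text{in probability}.$$
Consequently $\int_t^T\beta^\eps\,dW\to\int_t^T\beta\,dW$ in probability, and so $\Gamma^\eps_{t,T}\to\Gamma_{t,T}$ in probability.

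\emph{Step 3: conclusion via Vitali.} Since $\Gamma^\eps_{t,T}X\to\Gamma_{t,T}X$ in probability and the family is uniformly integrable by Step 1, Vitali's theorem gives $L^1$ convergence. Conditional expectation is an $L^1$-contraction, so $\delta Y^\eps_t\to U^X_t$ in $L^1(\P)$. The subsequence argument upgrades this to convergence of the full family $\eps\to0^+$. Uniqueness of $(U^X,V^X)$ is already provided by Proposition~\ref{prop:linear_BSDE_general} and Lemma~\ref{lem:coeff_satisfy_L}.

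The main obstacle is Step 1, namely uniformity in $\eps$ of the exponential moment of $\int|\gamma^\eps|$ and of the reverse Hölder exponent for $\mathcal E(\int\beta^\eps dW)$. Both require BMO norms of $Z^\eps$ bounded independently of $\eps$, and a choice of $\eta$ small enough relative to the resulting uniform $\alpha$. I would obtain these from Kobylanski's a priori bound, which depends only on $\|y+\eps X\|_\infty\le|y|+\|X\|_\infty$; this is exactly where the $|z|^\alpha$, $\alpha\in[1,2)$, growth of $\partial_y g$ in (A4$^*$) is absorbed.
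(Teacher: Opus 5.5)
Your proof is correct, but it finishes differently from the paper's proof.

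\paragraph{What is shared.} The paper uses the same linearization as you. It also proves the same convergence in probability of $\int_t^T|\gamma^\eps_s-\gamma_s|\,ds$ and $\int_t^T|\beta^\eps_s-\beta_s|^2\,ds$, via subsequences and domination.

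\paragraph{The paper's route.} It then upgrades these to convergence in $L^{n_0}(\P)$ by applying Vitali to their powers. Next it writes the error $\delta U^\eps:=U^X-\Delta Y^\eps$ as the solution of a linear BSDE with coefficients $(\gamma^\eps,\beta^\eps)$ and source $I^\eps=(\gamma-\gamma^\eps)U^X+(\beta-\beta^\eps)V^X$. Finally it bounds $\E|\delta U^\eps_t|\le C\,\|\int_t^T|I^\eps_s|\,ds\|_{L^{n_0}(\P)}$ uniformly in $\eps$. This last step needs the moment bounds $(U^X,V^X)\in\Sc^{2n}\times\Hc^{2n}$ from Proposition~\ref{prop:linear_BSDE_general}.

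\paragraph{Your route.} You compare the two explicit representations $\E[\Gamma^\eps_{t,T}X\,|\,\Fc_t]$ and $\E[\Gamma_{t,T}X\,|\,\Fc_t]$ directly:
\begin{itemize}
\item Convergence in probability of the coefficients, together with continuity of the stochastic integral and of the exponential, gives $\Gamma^\eps_{t,T}\to\Gamma_{t,T}$ in probability.
\item The uniform reverse H\"older exponent and the uniform exponential moment of $\int|\gamma^\eps|$ give $\sup_\eps\E[|\Gamma^\eps_{t,T}|^r]<\infty$ for some $r>1$.
\item One application of Vitali, plus the $L^1$-contractivity of $\E[\cdot\,|\,\Fc_t]$, concludes.
\end{itemize}

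\paragraph{Comparison.} Your route is shorter. It needs no higher moments of $(U^X,V^X)$, no $L^{n_0}$ convergence of the coefficient errors, and no uniform-in-$\eps$ estimate for an error BSDE. Both arguments rest on the same uniform BMO bound on $Z^\eps$. The paper's route gives an explicit bound on the error in terms of the coefficient discrepancies. That form is the one you would use to control the whole pair $(\Delta Y^\eps,\Delta Z^\eps)$, not just its value at time $t$.

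\paragraph{Two small corrections.}
\begin{itemize}
\item You need uniform integrability to turn a.e.\ convergence of $\gamma^{\eps_n}$ into convergence of $\int_t^T|\gamma^{\eps_n}_s-\gamma_s|\,ds$. That uniform integrability is in $dt\otimes d\P$. It comes from $Z^\eps\to Z^y$ in $\Hc^2$ (or from the paper's dominating process $G$), not from the energy inequality.
\item Deriving $|Y^\eps-Y^y|\le C\eps$ from the linearization would require a conditional John--Nirenberg estimate under $\Q^\eps$; comparison alone does not give it, since $g$ depends on $y$. You only need $Y^\eps\to Y^y$ uniformly, which Kobylanski's stability theorem provides, as the paper uses. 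Your It\^o argument for $Z^\eps\to Z^y$ in $\Hc^2$ still works with that weaker input.
\end{itemize}
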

\begin{proof}
By translation, we may assume without loss of generality that $y=0$.
Fix $t\in[0,T]$, and $X\in L^\infty(\Fc_T)$.
For $\eps \in (0,1)$, let $(Y^\eps,Z^\eps)$ be the solution of
$$
-dY_s^\eps=g(s,Y_s^\eps,Z_s^\eps)\,ds-Z_s^\eps\,dW_s,
\qquad s\in[t,T),\qquad Y_T^\eps=\eps X,
$$
so that $\Ec^g_{t,T}[\eps X]=Y_t^\eps$. Furthermore, $(Y^0,Z^0)$ solves $\mathrm{BSDE}(T,0,g)$.

By Kobylanski \cite{Kobylanski2000}, there exists $K>0$ such that for all $\eps \in (0,1)$
$$
\|Y^\eps\|_{\Sc^\infty} + \|Y^0\|_{\Sc^\infty}
+\Big\|\int_t^T Z_s^\eps\,dW_s\Big\|_{\mathrm{BMO}} + \Big\|\int_t^T Z_s^0\,dW_s\Big\|_{\mathrm{BMO}}
\le K,
$$
and, as $\eps\to0$, one has
\begin{equation}\label{eq:conv_ZY_eps}
    Y^{\eps} \longrightarrow Y^0 ~~ \text{ uniformly on } [0,T], ~\P\text{-a.s.} ~~\text{ and, } ~~ \|Z^\eps-Z^0\|_{\Hc^2}\longrightarrow0.
\end{equation}

\smallskip
\noindent\emph{\underline{Step 1:}}
For $\eps>0$, set
$$
\Delta Y_s^\eps:=\frac{Y_s^\eps-Y_s^0}{\eps},
\qquad
\Delta Z_s^\eps:=\frac{Z_s^\eps-Z_s^0}{\eps},
\qquad s\in[t,T].
$$
Subtracting the BSDEs for $(Y^\eps,Z^\eps)$ and $(Y^0,Z^0)$ and dividing by $\eps$ yields
\begin{equation}\label{eq:Delta_BSDE_clean}
-d\Delta Y_s^\eps=\frac{g(s,Y_s^\eps,Z_s^\eps)-g(s,Y_s^0,Z_s^0)}{\eps}\,ds-\Delta Z_s^\eps\,dW_s,
\qquad s\in[t,T),\qquad \Delta Y_T^\eps=X.
\end{equation}
Using the mean value expansion of $g$ in $(y,z)$, we get
$$
g(s,Y_s^\eps,Z_s^\eps)-g(s,Y_s^0,Z_s^0)
=\gamma_s^\eps\,(Y_s^\eps-Y_s^0)+\beta_s^\eps\,(Z_s^\eps-Z_s^0),
$$
where
$$
\gamma_s^\eps:=\int_0^1\partial_y g\big(s,Y_s^0+\theta(Y_s^\eps-Y_s^0),\,Z_s^0+\theta(Z_s^\eps-Z_s^0)\big)\,d\theta,
$$
and
$$
\beta_s^\eps:=\int_0^1\partial_z g\big(s,Y_s^0+\theta(Y_s^\eps-Y_s^0),\,Z_s^0+\theta(Z_s^\eps-Z_s^0)\big)\,d\theta.
$$
Plugging into \eqref{eq:Delta_BSDE_clean} shows that $(\Delta Y^\eps,\Delta Z^\eps)$ solves the linear BSDE
\begin{equation}\label{eq:lin_BSDE_eps_clean}
-d\Delta Y_s^\eps=\big(\gamma_s^\eps\Delta Y_s^\eps+\beta_s^\eps\Delta Z_s^\eps\big)\,ds-\Delta Z_s^\eps\,dW_s,
\qquad s\in[t,T),\qquad \Delta Y_T^\eps=X.
\end{equation}

\smallskip
\noindent\emph{\underline{Step 2:} Identification of the limit linear BSDE.}
Let $(Y,Z):=(Y^0,Z^0)$ and define
$$
\gamma_s:=\partial_y g(s,Y_s,Z_s),
\qquad
\beta_s:=\partial_z g(s,Y_s,Z_s),
\qquad s\in[t,T].
$$
By Lemma~\ref{lem:coeff_satisfy_L}, the pair $(\gamma,\beta)$ satisfies \textbf{(L)} on $[t,T]$. Hence, by Proposition~\ref{prop:linear_BSDE_general} with $\phi\equiv0$, there exists a unique solution $(U^X,V^X)\in\Sc^{2n_0}\times\Hc^{2n_0}$ (for any $n_0\ge1$) to the linear BSDE
\begin{equation}\label{eq:lin_limit_clean}
-dU_s^X=\big(\gamma_sU_s^X+\beta_sV_s^X\big)\,ds-V_s^X\,dW_s,
\qquad s\in[t,T),\qquad U_T^X=X.
\end{equation}

We now show that the coefficients $(\gamma^\varepsilon, \beta^\varepsilon)$ converge to $(\gamma, \beta)$ in the appropriate topologies. The argument uses the uniform convergence of $Y^\varepsilon \to Y$ and the $L^2$-convergence of $Z^\varepsilon \to Z$ from \eqref{eq:conv_ZY_eps}, combined with a subsequence extraction and Lebesgue's dominated 
convergence theorem. Specifically, using 
\eqref{eq:conv_ZY_eps}, we show that

\begin{equation}\label{eq:A_B_prob_conv}
A^\eps:=\int_t^T |\gamma_s^\eps-\gamma_s|\,ds \longrightarrow 0,
\qquad
B^\eps:=\int_t^T |\beta_s^\eps-\beta_s|^2\,ds \longrightarrow 0,
\qquad\text{in probability.}
\end{equation}

To prove \eqref{eq:A_B_prob_conv}, by the subsequence criterion for convergence in probability, it is enough to show that from every sequence
$\eps_n\downarrow0$ one can extract a further subsequence, still denoted $(\eps_n)_n$, such that
$$
A^{\eps_n}\longrightarrow 0
\qquad\text{and}\qquad
B^{\eps_n}\longrightarrow 0
\qquad\text{in }L^1(\P).
$$
Fix such a sequence $(\eps_n)_n$, from \eqref{eq:conv_ZY_eps}, we have
$\sup_{s\in[t,T]}|Y_s^{\eps_n}-Y_s|\to0$ $\P\text{-a.s.}$. Also, since $Z^{\eps_n}-Z\to0$ in $L^2(dt\otimes d\P)$, we may extract a further subsequence, still denoted
$(\eps_n)_n$, such that
$$
\sum_{n\ge1}\|Z^{\eps_n}-Z\|_{\Hc^2}<\infty.
$$
Setting
$$
G_{\cdot}:=\sum_{n\ge1}|Z^{\eps_n}_{\cdot}-Z_{\cdot}|,
$$
we then have
$$
G\in L^2(dt\otimes d\P),
\qquad
|Z^{\eps_n}-Z|\le G
\quad dt\otimes d\P\text{-a.e. for all }n,
$$
and, in particular,
$$
Z^{\eps_n}\longrightarrow Z
\qquad dt\otimes d\P\text{-a.e. on }[t,T]\times\Omega.
$$
Since $(Y^\eps)_{\eps\in(0,1)}$ is uniformly bounded in $\Sc^\infty$, there exists $K>0$ such that
$$
|Y_s^\eps|\vee |Y_s|\le K,
\qquad dt\otimes d\P\text{-a.e. on }[t,T]\times\Omega,\ \forall\,\eps\in(0,1).
$$

We now estimate the coefficients. By \textbf{(A3)}, since $|Y_s^{\eps_n}|\vee |Y_s|\le K$, there exists a constant $C>0$ such that
$$
|\beta_s^{\eps_n}|+|\beta_s|
\le C\big(1+|Z_s^{\eps_n}|+|Z_s|\big),
\qquad dt\otimes d\P\text{-a.e.}
$$
Hence
$$
|\beta_s^{\eps_n}-\beta_s|^2
\le 3C\big(1+|Z_s^{\eps_n}|^2+|Z_s|^2\big).
$$
Using
$$
|Z_s^{\eps_n}|\le |Z_s|+G_s,
$$
we derive that
$$
|\beta_s^{\eps_n}-\beta_s|^2
\le 3C\big(1+|Z_s|^2+G_s^2\big),
\qquad dt\otimes d\P\text{-a.e.}
$$
Using the same domination idea above applied to $\partial_z g$, one can show using Lebesgue's dominated convergence
$$
\beta_s^{\eps_n}\longrightarrow \beta_s,
\qquad dt\otimes d\P\text{-a.e. on }[t,T]\times\Omega.
$$
Since $Z,G\in L^2(dt\otimes d\P)$, the right-hand side belongs to $L^1(dt\otimes d\P)$. Therefore, by Lebesgue's dominated convergence,
$$
\E\Big[\int_t^T |\beta_s^{\eps_n}-\beta_s|^2\,ds\Big]\longrightarrow 0.
$$
That is,
\begin{equation}\label{eq:B_subseq_L1}
B^{\eps_n}\longrightarrow 0
\qquad\text{in }L^1(\P).
\end{equation}

Similarly, by \textbf{(A4$^*$)}, for $\delta=1$ there exists $h_1\in L^1([0,T])$ such that
$$
|\partial_y g(s,\omega,y,z)|\le h_1(s)+|z|^2,
\qquad dt\otimes d\P\text{-a.e.},\ \forall (y,z)\in\R\times\R^d.
$$
Therefore,
\begin{align*}
|\gamma_s^{\eps_n}|
&=
\left|
\int_0^1
\partial_y g\big(s,
Y_s+\theta(Y_s^{\eps_n}-Y_s),
Z_s+\theta(Z_s^{\eps_n}-Z_s)\big)\,d\theta
\right| \\
&\le
\int_0^1 \Big(h_1(s)+\big|(1-\theta)Z_s+\theta Z_s^{\eps_n}\big|^2\Big)\,d\theta \\
&\le
h_1(s)+|Z_s|^2+|Z_s^{\eps_n}|^2,
\end{align*}
and also
$$
|\gamma_s|\le h_1(s)+|Z_s|^2.
$$
Hence
$$
|\gamma_s^{\eps_n}-\gamma_s|
\le 2h_1(s)+2|Z_s|^2+|Z_s^{\eps_n}|^2.
$$
Using once again
$$
|Z_s^{\eps_n}|\le |Z_s|+G_s,
$$
we obtain
$$
|\gamma_s^{\eps_n}-\gamma_s|
\le C\big(h_1(s)+|Z_s|^2+G_s^2\big),
\qquad dt\otimes d\P\text{-a.e.}
$$
The right-hand side belongs to $L^1(dt\otimes d\P)$, since $h_1\in L^1([0,T])$ and $Z,G\in L^2(dt\otimes d\P)$.
Using again the same domination idea above applied to $\partial_y g$, one can show using Lebesgue's dominated convergence 
$$
\gamma_s^{\eps_n}\longrightarrow \gamma_s,
\qquad dt\otimes d\P\text{-a.e. on }[t,T]\times\Omega.
$$
Applying again Lebesgue's dominated convergence yields
$$
\E\Big[\int_t^T |\gamma_s^{\eps_n}-\gamma_s|\,ds\Big]\longrightarrow 0.
$$
That is,
\begin{equation}\label{eq:A_subseq_L1}
A^{\eps_n}\longrightarrow 0
\qquad\text{in }L^1(\P).
\end{equation}
Since the above argument applies to every sequence $\eps_n\downarrow0$, the subsequence criterion yields
\eqref{eq:A_B_prob_conv}.

On the other hand, by the uniform BMO bound on $\int_t^\cdot Z_s^\eps\,dW_s$ and the energy inequalities in
Theorem~\ref{thm:bmo_toolkit}, for every integer $n_0\ge1$,
$$
\sup_{\eps\in(0,1)}
\E\Big[\Big(\int_t^T |Z_s^\eps|^2\,ds\Big)^{2n_0}\Big]<\infty.
$$
Using \textbf{(A3)} and \textbf{(A4$^*$)} as above, we deduce that, for every integer $n_0\ge1$,
$$
\sup_{\eps\in(0,1)}\E\big[(A^\eps)^{2n_0}\big]<\infty,
\qquad
\sup_{\eps\in(0,1)}\E\big[(B^\eps)^{2n_0}\big]<\infty.
$$
Hence the families $\big((A^\eps)^{n_0}\big)_{\eps\in(0,1)}$ and $\big((B^\eps)^{n_0}\big)_{\eps\in(0,1)}$ are uniformly integrable. Combining this with
\eqref{eq:A_B_prob_conv} and Vitali's theorem yields
\begin{equation}\label{eq:beta_gamma_Lp_clean}
\E\Big[\Big(\int_t^T|\gamma_s^\eps-\gamma_s|\,ds\Big)^{n_0}\Big]\longrightarrow0,
\qquad
\E\Big[\Big(\int_t^T|\beta_s^\eps-\beta_s|^2\,ds\Big)^{n_0}\Big]\longrightarrow0,
\qquad \eps\to0.
\end{equation}

\smallskip
\noindent\emph{\underline{Step 3:} Convergence.} We conclude by showing that 
$\Delta Y^\varepsilon_t \to U^X_t$ in $L^1(\P)$. The 
error $\delta U^\varepsilon := U^X - \Delta 
Y^\varepsilon$ solves a linear BSDE with source term 
$I^\varepsilon$ that vanishes as 
$\varepsilon \to 0$; the rate of convergence is 
controlled by the representation formula from 
Proposition~\ref{prop:linear_BSDE_general} and Vitali's theorem.
For $\eps\in(0,1)$, set
$$
\delta U_s^\eps:=U_s^X-\Delta Y_s^\eps,
\qquad
\delta V_s^\eps:=V_s^X-\Delta Z_s^\eps,
\qquad s\in[t,T].
$$
Subtracting \eqref{eq:lin_BSDE_eps_clean} from \eqref{eq:lin_limit_clean} gives
\begin{align*}
\delta U_s^\eps
&=\int_s^T\big(\gamma_r^\eps\delta U_r^\eps+\beta_r^\eps\delta V_r^\eps\big)\,dr
+\int_s^T\Big((\gamma_r-\gamma_r^\eps)U_r^X+(\beta_r-\beta_r^\eps)V_r^X\Big)\,dr
-\int_s^T \delta V_r^\eps\,dW_r.
\end{align*}
Define
$$
I_r^\eps:=(\gamma_r-\gamma_r^\eps)U_r^X+(\beta_r-\beta_r^\eps)V_r^X,
\qquad r\in[t,T].
$$
Then, for all $p\geq2$, we have by Cauchy-Schwarz
\begin{align} \label{ineq}
    \E\left[ \left ( \int_t^T \lvert I^{\eps}_s \lvert ds \right)^p \right] \leq &\E \Big[\sup_{t\le s\le T}|U^X_s|^{2p}\Big]^{\frac{1}{2}} \E\left[\Big(\int_t^T|\gamma_s^\eps-\gamma_s|\,ds\Big)^{2p}\right]^{\frac{1}{2}} + \\
    &\E^\P\left[ \left( \int_t^T |V_s^X|^{2} ds \right)^{p}\right]^{\frac{1}{2}} \E\left[\Big(\int_t^T|\beta_s^\eps-\beta_s|^2\,ds\Big)^{p}\right]^{\frac{1}{2}} < \infty.
\end{align}   
By Proposition~\ref{prop:linear_BSDE_general} applied to the coefficients
$(\phi,\gamma,\beta)=(I^\eps,\gamma^\eps,\beta^\eps)$ on $[t,T]$ (note that $\int\beta^\eps\,dW$ is BMO uniformly in $\eps$ by the uniform BMO bound on $\int Z^\eps\,dW$), we obtain the representation
$$
\delta U_t^\eps=\E^{\Q^\eps}\Big[\int_t^T \Gamma_{t,s}^\eps I_s^\eps\,ds\ \Big|\ \Fc_t\Big],
$$
where $\frac{d\Q^\eps}{d\P}=\mathcal E(\int_t^T \beta_s^\eps\,dW_s)$ and
$\Gamma_{t,s}^\eps=\exp(\int_t^s\gamma_r^\eps\,dr)$.

Using again Proposition~\ref{prop:linear_BSDE_general} (in particular the reverse H\"older control for $\Q^\eps$ and the exponential integrability of $\Gamma^\eps$ with constants independent of $\eps$), we can choose exponents
$q_0>1$, $p_0$ conjugate (independent of $\eps$), and $r_0,n_0>1$ with $\frac{1}{r_0}+\frac{1}{n_0}=\frac{1}{p_0}$, such that
\begin{equation}\label{eq:deltaU_est_clean}
\E\big[|\delta U_t^\eps|\big]
\le C\,\Big\|\int_t^T|I_s^\eps|\,ds\Big\|_{L^{n_0}(\P)},
\end{equation}
for a constant $C>0$ independent of $\eps\in(0,1)$.
Finally, using \eqref{ineq}, $(U^X,V^X)\in\Sc^{2n_0}\times\Hc^{2n_0}$, and \eqref{eq:beta_gamma_Lp_clean}, we deduce that
$$\big\|\int_t^T|I_s^\eps|\,ds\big\|_{L^{n_0}(\P)}\to0,$$ therefore by \eqref{eq:deltaU_est_clean},
$$
\E\big[|U_t^X-\Delta Y_t^\eps|\big]=\E\big[|\delta U_t^\eps|\big]\longrightarrow0,
\qquad \eps\to0.
$$
Recalling that $\Delta Y_t^\eps=\frac{1}{\eps}\Ec^g_{t,T}[\eps X]$, we obtain
$$
\lim_{\eps\to0^+}\frac{1}{\eps}\Big(\Ec^g_{t,T}[\eps X]-\Ec^g_{t,T}[0]\Big)=U_t^X,
\qquad \text{in }L^1(\P).
$$
This concludes the proof.
\end{proof}
We now provide a representation result for quadratic generators developed in \cite{zheng2018representation} and a proposition on the properties of quadratic $g$-expectations with a deterministic generator adapted from \cite{Ma02062010}.

 Let $C>0$ be a given constant and define the following stopping time
$$ \tau^{t}_{C} := \inf \left \{ s \in [0,T-t] , ~~ \lvert W_{t + s} - W_t \lvert > C \right \} \wedge (T-t).$$
We now recall the following result which will be used in the proof of Theorem \ref{thm:main}.
\begin{Theorem}[\cite{zheng2018representation}] \label{thm:representation}
    Assume \textbf{(A1)}–\textbf{(A2)}, then for each $(y,z) \in \R \times \R^d$ and almost every $t \in [0,T)$  
    $$ g(t, \cdot, y,  z) = \lim_{\eps \to 0^+} \frac{1}{\eps} \left( \Ec^g_{t,t+\eps  \wedge {\tau}^t_C}\left [ y + z(W_{t + \eps \wedge \tau^t_C} - W_t) \right]  - y \right), ~~ \P\text{-a.s.}$$
 where $C>0$ is an arbitrary positive constant.
\end{Theorem}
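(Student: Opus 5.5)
The plan is to recover the generator from the short-time behaviour of the $g$-expectation of a localized linear payoff. Fix $t$, $(y,z)$ and $C>0$, and write $\sigma_\eps:=t+\eps\wedge\tau^t_C$ and $\xi_\eps:=y+z(W_{\sigma_\eps}-W_t)$. The payoff $\xi_\eps$ is bounded, with $|\xi_\eps-y|\le |z|C$, so all quantities stay in $L^\infty$.

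\emph{BSDE on the short interval.} Let $(Y^\eps,Z^\eps)$ solve BSDE$(\sigma_\eps,\xi_\eps,g)$ on $[t,\sigma_\eps]$. Compare it with the candidate pair $\bar Y_s:=y+z(W_{s\wedge\sigma_\eps}-W_t)$, $\bar Z_s:=z\1_{s\le\sigma_\eps}$, which solves the same BSDE with driver $0$. Set $\delta Y:=Y^\eps-\bar Y$ and $\delta Z:=Z^\eps-\bar Z$. Then
$$
\delta Y_t=\E\Big[\int_t^{\sigma_\eps}g(s,Y^\eps_s,Z^\eps_s)\,ds\Big|\Fc_t\Big],
$$
using that the stochastic integral is a true martingale by the BMO property. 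This gives the decomposition
$$
\delta Y_t=\E\Big[\int_t^{\sigma_\eps}g(s,y,z)\,ds\Big|\Fc_t\Big]+R_\eps,
$$
where $R_\eps$ collects the differences $g(s,Y^\eps_s,Z^\eps_s)-g(s,y,z)$.

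\emph{Estimating the remainder.} I will show $\E|R_\eps|=o(\eps)$. The bound $|Y^\eps_s-y|\le \|\delta Y\|_\infty+|z|C$ is not small, so I use Kobylanski-type a priori estimates on the short interval instead. These give $\|\delta Y\|_{\Sc^\infty}\le c\,\eps$ and $\E[\int_t^{\sigma_\eps}|\delta Z_s|^2ds\,|\,\Fc_t]\le c\,\eps$, by \textbf{(A2)} and the boundedness of the terminal value. For the $y$-argument, $|Y^\eps_s-y|\le c\eps+|z||W_s-W_t|$, which is small in $L^2$ on $[t,t+\eps]$. For the $z$-argument, $Z^\eps$ is close to $z$ in averaged $L^2$. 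The uniform continuity in \textbf{(A1)} together with the quadratic domination in \textbf{(A2)} then gives $\frac1\eps\E\int_t^{\sigma_\eps}|g(s,Y^\eps_s,Z^\eps_s)-g(s,y,z)|\,ds\to0$. This is the main obstacle: the quadratic growth in $z$ must be handled with a uniform BMO/energy bound (Theorem~\ref{thm:bmo_toolkit}) to obtain uniform integrability, so that Vitali's theorem can replace pointwise convergence.

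\emph{Lebesgue points.} It remains to show $\frac1\eps\E[\int_t^{\sigma_\eps}g(s,y,z)ds\,|\,\Fc_t]\to g(t,y,z)$ almost surely for a.e. $t$. First, $\P(\tau^t_C<\eps\,|\,\Fc_t)=\P(\sup_{s\le\eps}|W_s|>C)$ is exponentially small, so up to $o(\eps)$ the integral may be taken over $[t,t+\eps]$. I then argue for a countable dense set of $(y,z)$ and pass to all $(y,z)$ by the uniform continuity in \textbf{(A1)}. For a fixed $(y,z)$, the process $g(\cdot,y,z)$ is progressive and bounded, so $\E[g(s,y,z)|\Fc_t]$ is close to $g(s,y,z)$ for $s$ near $t$ in an averaged sense. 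I intend to use a Lebesgue differentiation theorem for $L^1(dt\otimes d\P)$ functions: for a.e. $t$, $\frac1\eps\int_t^{t+\eps}\E[|g(s,y,z)-g(t,y,z)|]\,ds\to0$. This yields convergence in $L^1$, and almost sure convergence along a subsequence, which suffices to identify the limit. Getting the full a.s. limit in $\eps$ as stated, rather than along subsequences, is where I would cite the argument of \cite{zheng2018representation}.
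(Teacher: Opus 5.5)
The paper gives no proof of this statement; it imports it from \cite{zheng2018representation}. Your overall plan is the right one: use optional stopping to write $Y^\eps_t-y=\E[\int_t^{\sigma_\eps}g(s,Y^\eps_s,Z^\eps_s)\,ds\mid\Fc_t]$, split off a remainder, then run a Lebesgue-point argument. However, there is a genuine gap at the step you yourself flag as the main obstacle, and it does not close as proposed.

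Your estimate $\E[\int_t^{\sigma_\eps}|\delta Z_s|^2ds\mid\Fc_t]\le c\eps$ only says that $\eps^{-1}\E\int_t^{\sigma_\eps}|Z^\eps_s-z|^2ds$ stays bounded. It does not say that it tends to $0$, so it does not give ``$Z^\eps$ close to $z$ in averaged $L^2$''. Without that, nothing forces $g(s,Y^\eps_s,Z^\eps_s)-g(s,y,z)$ to be small on most of $[t,\sigma_\eps]$. The BMO/energy bounds of Theorem~\ref{thm:bmo_toolkit} cannot supply this either. At best they give uniform integrability of $\eps^{-1}\int_t^{\sigma_\eps}|Z^\eps_s|^2ds$ under $\P$, but Vitali also needs convergence to zero in measure, and that is exactly the missing smallness. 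On the normalized measure $\eps^{-1}\1_{[t,\sigma_\eps]}(s)\,ds\otimes d\P$ they do not even give uniform integrability of $|Z^\eps_s|^2$.

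The missing idea is a bootstrap that uses only \textbf{(A1)}--\textbf{(A2)}. On $[t,\sigma_\eps]$ you have $|g(s,Y^\eps_s,Z^\eps_s)|\le c(1+|\delta Z_s|^2)$, $\delta Y_{\sigma_\eps}=0$ and $\|\delta Y\|_{\Sc^\infty}\le c\eps$. It\^o's formula for $|\delta Y|^2$ then gives
\[
|\delta Y_t|^2+\E\Big[\int_t^{\sigma_\eps}|\delta Z_s|^2ds\Big|\Fc_t\Big]
=2\E\Big[\int_t^{\sigma_\eps}\delta Y_s\,g(s,Y^\eps_s,Z^\eps_s)\,ds\Big|\Fc_t\Big]
\le c\eps\Big(\eps+\E\Big[\int_t^{\sigma_\eps}|\delta Z_s|^2ds\Big|\Fc_t\Big]\Big),
\]
hence $\E[\int_t^{\sigma_\eps}|\delta Z_s|^2ds\mid\Fc_t]\le c\eps^2$ for small $\eps$.

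Now split $[t,\sigma_\eps]$ into two parts:
\begin{itemize}
\item on $\{|Y^\eps_s-y|+|\delta Z_s|<\delta\}$, \textbf{(A1)} bounds the integrand by $\eta$;
\item on the complement, the integrand is at most $c(1+|\delta Z_s|^2)$, and Chebyshev together with $|Y^\eps_s-y|\le c\eps+|z||W_s-W_t|$ controls the time spent there.
\end{itemize}
All these bounds are conditional on $\Fc_t$ and deterministic. This yields $|R_\eps|/\eps\to0$ in $L^\infty(\P)$, with no Vitali argument at all.

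The a.s.\ limit, which you defer to \cite{zheng2018representation}, is a second gap that is also easy to close. Your $L^1(dt\otimes d\P)$ Lebesgue differentiation gives only $L^1(\P)$ convergence, and a subsequence does not give the stated limit as $\eps\to0^+$. Instead, apply Lebesgue's differentiation theorem pathwise to the bounded function $s\mapsto g(s,\omega,y,z)$. Fubini then gives, for a.e.\ $t$, that $S_\eps:=\sup_{\eps'\le\eps}|\frac1{\eps'}\int_t^{t+\eps'}g(s,y,z)\,ds-g(t,y,z)|\downarrow0$ a.s. Since $g(t,y,z)$ is $\Fc_t$-measurable, conditional dominated convergence gives $\E[S_\eps\mid\Fc_t]\to0$ a.s. Monotonicity of $S_\eps$ and continuity of $\eps\mapsto\frac1\eps\int_t^{t+\eps}\E[g(s,y,z)\mid\Fc_t]\,ds$ let you restrict to countably many $\eps$.

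Finally, your density argument over $(y,z)$ is unnecessary, since the exceptional $t$-set in the statement is allowed to depend on $(y,z)$.
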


\section{Conditional atomlessness} \label{app:B}
We present here a technical result used in the proof of Theorem \ref{thm:general}. For the sake of completeness, we first give the definition of \emph{conditional atomlessness}.
\begin{Definition}[Conditionally atomless {\cite[Definition~1]{Delbaen2020}}]\label{def:condatomless}
Let $(\Omega,\Fc,\P)$ be a probability space with a sub-$\sigma$-algebra $\Gc \subset \Fc$.
We say that $\Fc$ is \emph{atomless conditionally to} $\Gc$ if for every $A\in\Fc$
there exists a set $B\subset A$, $B\in\Fc$, such that
$$
0<\E[\1_B\mid \Gc]<\E[\1_A\mid \Gc]
\quad\text{on the set }\{\E[\1_A\mid \Gc]>0\}.
$$
\end{Definition}
\begin{Remark}
An equivalent formulation is: for every $A\in\Fc$ with $\P(A)>0$ there exists $B\subset A$
such that
$$
\P\big(0<\E[\1_B\mid\Gc]<\E[\1_A\mid\Gc]\big)>0.
$$
See \cite[Theorem~1]{Delbaen2020}. This is the conditional version of the result in \cite{sierpinski1922fonctions}. Furthermore, conditional atomlessness to $\Fc_0$, in our Wiener filtered space, is equivalent to the classical definition of atomlessness. 
\end{Remark}
We now introduce a result on the conditional atomlessness when the probability space is filtered with Brownian filtration. 

\begin{Lemma}\label{lemma:condprob}
Fix $0\le t<T$. Then $\Fc_T$ is atomless conditionally to $\Fc_t$.
In particular, if $Z\in L^{1}(\Fc_T)$ is not $\Fc_t$-measurable and if we set
$m_t:=\E[Z\mid\Fc_t]$, then there exist $A,B\in\Fc_T$ such that
$$
\P(A\mid\Fc_t)=\P(B\mid\Fc_t)>0,\qquad \P\text{-a.s. on a set of positive probability,}
$$
and
$$
Z\,\1_A \le m_t\,\1_A,\qquad Z\,\1_B > m_t\,\1_B.
$$
\end{Lemma}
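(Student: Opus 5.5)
The proof has two parts. First I will show that $\Fc_T$ is atomless conditionally to $\Fc_t$, directly from Definition~\ref{def:condatomless}. Then I will deduce the ``in particular'' statement from this property together with a splitting argument.

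\emph{Conditional atomlessness.} Write $\Fc_T=\Fc_t\vee\Gc^t_{T-t}$, where $\Gc^t$ is the filtration generated by the increments $W_{t+s}-W_t$, which is independent of $\Fc_t$. Consider the variable $\xi:=W^1_T-W^1_t$, which is independent of $\Fc_t$ and has a continuous Gaussian law. Fix $A\in\Fc_T$ and put $p:=\P(A\mid\Fc_t)$. I define the conditional distribution function
\begin{equation*}
F(x):=\P(A\cap\{\xi\le x\}\mid\Fc_t),
\end{equation*}
choosing a regular version, which is possible because $(\Omega,\Fc_T)$ is standard. Then $F$ is nondecreasing in $x$, with $F(-\infty)=0$ and $F(+\infty)=p$. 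It is also continuous in $x$: each jump is bounded by $\P(\xi=x\mid\Fc_t)=0$.

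Set $x^*:=\inf\{x\in\Q: F(x)\ge p/2\}$, which is $\Fc_t$-measurable, and take $B:=A\cap\{\xi\le x^*\}$. This gives $\P(B\mid\Fc_t)=p/2$, so $0<\P(B\mid\Fc_t)<p$ on $\{p>0\}$. The measurability of $B$ is the point I expect to need the most care. It holds because $\{\xi\le x^*\}$ is the jointly measurable set $\{(\omega,\xi(\omega)):\xi\le x^*(\omega)\}$, and the conditional probability can be computed through the $\Fc_t$-measurable random threshold by a monotone class argument (the ``freezing'' lemma for regular conditional distributions).

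\emph{Consequence.} Let $Z\in L^1(\Fc_T)$ be non-$\Fc_t$-measurable and let $m_t=\E[Z\mid\Fc_t]$. Define $B_0:=\{Z>m_t\}$ and $A_0:=\{Z\le m_t\}$. On the set $D:=\{\P(B_0\mid\Fc_t)>0\}$ we must also have $\P(A_0\mid\Fc_t)>0$, since otherwise $Z>m_t$ a.s.\ conditionally, which contradicts $\E[Z-m_t\mid\Fc_t]=0$.

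Next I need $\P(D)>0$. If instead $\P(D)=0$, then $Z\le m_t$ a.s., and then $\E[Z\mid\Fc_t]=m_t$ forces $Z=m_t$, so $Z$ would be $\Fc_t$-measurable, contradicting the hypothesis.

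It remains to equalize the two conditional masses. Set $c:=\min(\P(A_0\mid\Fc_t),\P(B_0\mid\Fc_t))$, which is $\Fc_t$-measurable and strictly positive on $D$. Using the construction of the first part with a threshold chosen so that the level is $c$ instead of $p/2$, I can find subsets $A\subset A_0$ and $B\subset B_0$ with $\P(A\mid\Fc_t)=\P(B\mid\Fc_t)=c>0$ on $D$. Because $A\subset A_0$ and $B\subset B_0$, the inequalities $Z\1_A\le m_t\1_A$ and $Z\1_B>m_t\1_B$ hold automatically. This finishes the argument.

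The main obstacle is the rigorous construction of subsets with a prescribed $\Fc_t$-measurable conditional mass, since it requires the continuity and measurability of $F$ under a random threshold. Every other step is elementary.
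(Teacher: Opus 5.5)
Your proof is correct. It differs from the paper's mainly in being self-contained. The paper forms $U=\Phi\bigl((W_T-W_t)/\sqrt{T-t}\bigr)$, which is uniform and independent of $\Fc_t$, and then cites two results from \cite{Delbaen2020}. Theorem~2 there gives conditional atomlessness from the existence of an independent atomless $\sigma$-algebra. Lemma~2 there gives, inside any $C\in\Fc_T$, subsets of prescribed conditional mass $h\,\P(C\mid\Fc_t)$ for $\Fc_t$-measurable $h\in[0,1]$. The paper applies this to $E^-=\{Z<m_t\}$ and $E^+=\{Z>m_t\}$ at the level $p_t=\tfrac12\bigl(\P(E^-\mid\Fc_t)\wedge\P(E^+\mid\Fc_t)\bigr)$.

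Your conditional-quantile cut $C\cap\{\xi\le x^*\}$ along the independent Gaussian coordinate is, in effect, a direct proof of that Lemma~2 in the Brownian setting. A single construction therefore gives both the conditional atomlessness and the equal-mass subsets $A\subset\{Z\le m_t\}$ and $B\subset\{Z>m_t\}$. The cost is the measure-theoretic bookkeeping that the paper outsources: regular conditional laws, and the disintegration (freezing) identity at an $\Fc_t$-measurable threshold. Your justification that $\P(D)>0$, and that $\P(A_0\mid\Fc_t)>0$ on $D$, is also more explicit than the paper's one-line claim.

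Two details need tightening.

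First, you evaluate $F$ at the random point $x^*(\omega)$, so you need $x\mapsto F(x)(\omega)$ to be continuous for a.e.\ $\omega$ simultaneously in all $x$. The bound by $\P(\xi=x\mid\Fc_t)=0$ only holds for each fixed $x$, off a null set that depends on $x$. Argue instead as follows. Take the regular conditional law of $(\1_A,\xi)$ given $\Fc_t$. By independence and agreement on rational half-lines, its $\xi$-marginal equals $\mathcal N(0,T-t)$ for a.e.\ $\omega$, and is therefore atomless. The jump of $F(\cdot)(\omega)$ at any $x$ is then bounded by the mass of $\{x\}$ under that marginal, which is $0$. Such regular versions exist because the target space is $\R^2$; no standardness of $(\Omega,\Fc_T)$ is needed.

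Second, with level $c=\min\bigl(\P(A_0\mid\Fc_t),\P(B_0\mid\Fc_t)\bigr)$, the infimum defining the threshold can be $+\infty$ where $c$ equals the full conditional mass. Either allow $x^*=+\infty$, or use $c/2$ as the paper does.
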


\begin{proof}
\emph{\underline{Step 1} (Conditional atomlessness).}
Define
$$
U:=\Phi\!\left(\frac{W_T-W_t}{\sqrt{T-t}}\right).
$$
Then $U$ is $\Fc_T$-measurable, uniformly distributed on $[0,1]$, and independent of $\Fc_t$.
Hence $\sigma(U)\subset \Fc_T$ is atomless and independent of $\Fc_t$.
By \cite[Theorem~2]{Delbaen2020}, $\Fc_T$ is atomless conditionally to $\Fc_t$.

\noindent \emph{\underline{Step 2} (Separation with equal conditional mass).}
Let $m_t:=\E[Z\mid\Fc_t]$ and set
$$
E^-:=\{Z<m_t\},\qquad E^+:=\{Z>m_t\}.
$$
Since $Z$ is not $\Fc_t$-measurable, we have $\P(E^-|\Fc_t)>0$ and $\P(E^+|\Fc_t)>0$ on a set
of positive probability denoted by $D$. Define the $\Fc_t$-measurable random variable
$$
p_t:=\frac12\big(\P(E^-\mid\Fc_t)\wedge \P(E^+\mid\Fc_t)\big),
\qquad
D:=\{\P(E^-|\Fc_t)>0\}\cap\{\P(E^+|\Fc_t)>0\}.
$$
Set
$$
h^-:=\1_D\,\frac{p_t}{\P(E^-\mid\Fc_t)},\qquad
h^+:=\1_D\,\frac{p_t}{\P(E^+\mid\Fc_t)}.
$$
By \cite[Lemma~2]{Delbaen2020}, applied with $C=E^-$ (resp. $C=E^+$) and
$h=h^-$ (resp. $h=h^+$), there exist $A\subset E^-$ and $B\subset E^+$ in $\Fc_T$ such that
$$
\E[\1_A\mid\Fc_t]=h^-\,\E[\1_{E^-}\mid\Fc_t]=p_t,\qquad
\E[\1_B\mid\Fc_t]=h^+\,\E[\1_{E^+}\mid\Fc_t]=p_t.
$$
Thus $\P(A|\Fc_t)=\P(B|\Fc_t)=p_t>0$ on $D$. Moreover, since $A\subset\{Z<m_t\}$ and
$B\subset\{Z>m_t\}$, we have $Z\1_A\le m_t\1_A$ and $Z\1_B>m_t\1_B$.
\end{proof}
\section{Invariance properties} \label{app:C}
The following lemmas are key ingredients for the proof of Theorem \ref{thm:main}.

\begin{Lemma} \label{lemma: lawBM}
Let $t \in [0,T]$, $s \in [0,t)$, $\varepsilon \in (0, T-t]$, $\lambda \in (0,1)$ and $O$ be an orthogonal matrix in $\R^{d\times d}$. Then
$$ \lambda  O\left( W_{t +  \eps \wedge \tau^t_C} - W_t \right) ~ \stackrel{d}{=} ~ W_{s + \lambda^2 \eps \wedge \tau^s_{\lambda C}} - W_s .$$
\end{Lemma}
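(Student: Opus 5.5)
We prove the identity in law by combining three classical invariances of Brownian motion: the strong Markov (or simple Markov) property at the deterministic time $t$, Brownian scaling, and rotational invariance. The idea is to write both sides as one and the same measurable functional of a Brownian path. Concretely, set $B^t_u := W_{t+u}-W_t$ for $u\in[0,T-t]$. By the Markov property $B^t$ is a standard $d$-dimensional Brownian motion (independent of $\Fc_t$), and $\tau^t_C=\inf\{u\in[0,T-t]: |B^t_u|>C\}\wedge(T-t)$ is the corresponding exit time. Hence $W_{t+\eps\wedge\tau^t_C}-W_t = B^t_{\eps\wedge\tau^t_C}$. Here we read the subscript $t+\eps\wedge\tau^t_C$ as $t+(\eps\wedge\tau^t_C)$, consistent with its use in Theorem~\ref{thm:representation}.

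\textbf{Step 1.} Define the process $\tilde B_u := \lambda O B^t_{u/\lambda^2}$ for $u\in[0,\lambda^2(T-t)]$. By scaling and rotational invariance, $\tilde B$ is again a standard Brownian motion on its time interval. Since $|O x| = |x|$, we have $|\tilde B_u|>\lambda C$ if and only if $|B^t_{u/\lambda^2}|>C$. Therefore the first exit time of $\tilde B$ from the ball of radius $\lambda C$ equals $\lambda^2\tau^t_C$, ignoring for the moment the truncation at the horizon. It follows that
\[
\lambda O B^t_{\eps\wedge\tau^t_C} = \tilde B_{\lambda^2\eps\wedge\lambda^2\tau^t_C}.
\]
The right-hand side is the path functional ``$\tilde B$ stopped at $\lambda^2\eps$ or at its exit time from the ball of radius $\lambda C$'', evaluated on $\tilde B$.

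\textbf{Step 2.} Similarly, $W_{s+\lambda^2\eps\wedge\tau^s_{\lambda C}}-W_s$ is the same functional evaluated on the Brownian motion $B^s$. Both variables are therefore images of a Brownian law under one fixed measurable map on path space, and they have the same law.

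\textbf{Main obstacle: the horizon truncations.} The only delicate point is the truncation of the stopping times at $T-t$ versus $T-s$ (respectively $\lambda^2(T-t)$). Since $\eps\le T-t$, we have $\lambda^2\eps\le\lambda^2(T-t)$. Moreover, since $s<t$ and $\lambda<1$, we also have $\lambda^2\eps < T-t < T-s$. So in both expressions the truncation is inactive once we take the minimum with $\eps$ (respectively $\lambda^2\eps$). Both random times then reduce to the untruncated exit time minimized with the deterministic level, and this is all that the functional in Step 1 depends on. We will make this check explicit, and also verify measurability of the path functional (hitting times of open sets by continuous paths), to conclude.
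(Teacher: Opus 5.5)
Your proposal is correct and follows essentially the paper's argument: both sides are written as the same measurable path functional of a Brownian motion, using shift (Markov), scaling and rotation invariance. The only difference in routing is that the paper passes through the time-$0$ motion $W^0$, whereas you map $B^t$ directly to $\tilde B$ and compare it with $B^s$. Your explicit check that the horizon truncations are inactive once the minimum with $\eps$ (resp.\ $\lambda^2\eps$) is taken is more careful than the paper, which handles the differing truncation levels and the final shift from $0$ to $s$ only implicitly.
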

\begin{proof}

We organise the proof into two steps.\\

\underline{\textit{Step 1:}} We first show that 
$$\lambda  O\left( W_{t +  \eps \wedge \tau^t_C} - W_t \right) \stackrel{d}{=} \lambda  O\left( W_{  \eps \wedge \tau^0_C}\right). $$
Observe that, by definition, 
$$\tau_C^t=\tau\left((W_{s+t}-W_t)_{0 \leq s \leq T-t}\right),$$
for a measurable function $\tau$ defined on the canonical space $C^0([0,T], \R^d)$ endowed with the Borel $\sigma$-algebra and that
$$O\left( W_{t +  \eps \wedge \tau^t_C} - W_t \right)=\Psi_{\eps} \left((W_{s+t}-W_t)_{0 \leq s \leq T-t}\right),$$
with $\Psi_{\eps}$ a measurable function that is also defined on the canonical space $C^0([0,T], \R^d)$ endowed with the Borel $\sigma$-algebra.
Similarly, we have
$$O\left( W_{\eps \wedge \tau^0_C}  \right)=\Psi_{\eps}\left((W_s)_{0 \leq s \leq T-t}\right).$$
Denote $W_s^t:=W_{s+t}-W_t$, for $0 \leq s \leq T-t$. By properties of the Brownian motion, we have
$W^t \stackrel{d}{=} W^0$,
from which the conclusion follows.

\underline{\textit{Step 2:}}  Let us now show that $$ \lambda O\left(W_{\varepsilon \wedge \tau_C^0}\right) \stackrel{d}{=} W_{\lambda^2 \varepsilon \wedge \tau_{\lambda C}^0}.$$ 
Using a Brownian scaling and rotation-invariance, we obtain that for all $s \in [0,T]$ 
$$  O W_s \stackrel{d}{=} \Tilde W_s := \lambda W_{\frac{s}{\lambda^2 }},$$
where $\Tilde W$ is a Brownian motion.
Furthermore, observe that 
\begin{align*}
    \lambda^2 \tau^0_C &= \inf \left \{ \lambda^2 s \in [0,T] , ~~ \lvert W_{s}\lvert >  C \right \} \wedge (T-t) \\
    &= \inf \left \{u \in [0,T] , ~~ \lvert \Tilde W_{u}\lvert >  \lambda C \right \} \wedge (T-t) \\
    &= \tau^{\lambda} \left( \left( \Tilde W_{s} \right)_{0 \leq s \leq T} \right),
\end{align*}
for some measurable function $\tau^{\lambda}$ defined on the canonical space $C^0([0,T], \R^d)$ endowed with the Borel $\sigma$-algebra.
Finally, we obtain that 
$$\lambda O\left(W_{\varepsilon \wedge \tau_C^0}\right) =  \Tilde W_{\lambda^2 \varepsilon \wedge \lambda^2 \tau_C^0} = \Psi_{\eps}^{\lambda} \left(\left( \Tilde W_{s} \right)_{0 \leq s \leq T} \right),$$
for some measurable function $\Psi_{\eps}^{\lambda}$ defined on the canonical space $C^0([0,T], \R^d)$ endowed with the Borel $\sigma$-algebra.
On the other hand, it immediately follows that
$$ \tau^{0}_{\lambda C} = \tau^{\lambda}\left( \left( W_{s} \right)_{0 \leq s \leq T} \right).$$
Thus
$$W_{\lambda^2 \varepsilon \wedge \tau_{\lambda C}^0} = \Psi_{\eps}^{\lambda} \left(\left( W_{s} \right)_{0 \leq s \leq T} \right).$$
This concludes the proof.
\end{proof}

\begin{Lemma} \label{lemma:rotinvariance}
Let $f$ be a real-valued map on $\R \times \R^d$ verifying, for any arbitrary $\lambda \in [0,1]$ and $O$ orthogonal matrix in $\R^{d \times d}$,
$$ f(y, \lambda Oz) = \lambda^2 f(y,z).$$
Then, it verifies 
$$ f(y,z) = f(y,e_1) \lvert z \lvert^2,$$
where $(e_1, \cdots, e_d)$ forms an orthonormal basis of $\R^d$.
\end{Lemma}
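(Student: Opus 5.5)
Fix $y\in\R$ and suppress it, writing $\varphi(z):=f(y,z)$. The plan is to use the rotation invariance to reduce $\varphi$ to a function of $\lvert z\rvert$ alone, and then use the scaling relation to identify that radial function as a multiple of $r^2$.

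\emph{Step 1: evaluation at the origin.} Taking $\lambda=0$ and $O=I$ in the hypothesis gives $\varphi(0)=0\cdot\varphi(z)=0$. This agrees with $f(y,e_1)\lvert 0\rvert^2=0$, so from now on we may assume $z\neq0$.

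\emph{Step 2: radial symmetry.} Take $\lambda=1$. Then $\varphi(Oz)=\varphi(z)$ for every orthogonal $O$. The orthogonal group acts transitively on each sphere $\{\lvert z\rvert=r\}$. Hence for $z\neq0$ we can choose $O$ with $Oe_1=z/\lvert z\rvert$, which gives $\varphi(r\,z/\lvert z\rvert)=\varphi(re_1)$ for all $r>0$. So $\varphi(z)=\phi(\lvert z\rvert)$, where $\phi(r):=\varphi(re_1)$.

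\emph{Step 3: homogeneity.} Take $O=I$ and $\lambda\in(0,1]$. This gives $\phi(\lambda r)=\lambda^2\phi(r)$ for all $r>0$.
\begin{itemize}
\item If $\lvert z\rvert\le1$, apply this with $r=1$ and $\lambda=\lvert z\rvert$ to get $\phi(\lvert z\rvert)=\lvert z\rvert^2\phi(1)$.
\item If $\lvert z\rvert>1$, apply it with $r=\lvert z\rvert$ and $\lambda=1/\lvert z\rvert\in(0,1)$. This gives $\phi(1)=\lvert z\rvert^{-2}\phi(\lvert z\rvert)$, that is, $\phi(\lvert z\rvert)=\lvert z\rvert^2\phi(1)$.
\end{itemize}
In both cases $f(y,z)=\varphi(z)=\phi(1)\lvert z\rvert^2=f(y,e_1)\lvert z\rvert^2$.

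\emph{Main point to check.} The only real subtlety is that the hypothesis allows only $\lambda\le1$. The scaling therefore has to be run in reverse, from $\lvert z\rvert$ down to $1$, to cover $\lvert z\rvert>1$, as done in Step 3. No continuity of $f$ is needed.

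One inconsistency with the paper should be noted. In the proof of Theorem~\ref{thm:main} the relation is derived only for $\lambda\in(0,1)$, whereas the lemma is stated for $\lambda\in[0,1]$. Step 1 uses $\lambda=0$, and Step 2 uses $\lambda=1$ with a nontrivial rotation. If only $\lambda\in(0,1)$ is available, both can be recovered. For the rotations, write $\varphi(\tfrac12 Oz)=\tfrac14\varphi(z)$ and also $\varphi(\tfrac12 Oz)=\tfrac14\varphi(Oz)$ (the second is the hypothesis applied to the point $Oz$ with $O=I$); this gives $\varphi(Oz)=\varphi(z)$. The value $\varphi(0)=\lambda^2\varphi(0)$ for any fixed $\lambda\in(0,1)$ forces $\varphi(0)=0$.
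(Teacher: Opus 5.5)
Your proof is correct and follows essentially the paper's route: $\lambda=0$ for the origin, a rotation to reduce to the radial function $r\mapsto f(y,re_1)$, then the scaling relation with $\lambda=\lvert z\rvert$ or $\lambda=1/\lvert z\rvert$ according to whether $\lvert z\rvert\le 1$ or $\lvert z\rvert>1$. The paper handles $d=1$ first with $O=\pm1$ and then lifts via $Ov=e_1$, which is the same argument in a different order. Your closing remark is also right: Theorem~\ref{thm:main} only establishes the relation for $\lambda\in(0,1)$, while the lemma's proof uses $\lambda=0$ and $\lambda=1$, and your $\lambda=\tfrac12$ argument correctly recovers both the rotation invariance and $f(y,0)=0$.
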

\begin{proof}We first note that taking $\lambda=0$ gives $f(y,0)=0$.

First, we assume $d = 1$. Then, for all $\lambda \in [0,1]$, by taking $z=1$ we have
$$ f(y,\lambda) = f(y,1)\,\lambda^2.$$
Let $r\in\R\setminus\{0\}$ and set $O=\frac{r}{|r|}\in\{\pm1\}$.

\textbf{Case } $|r|\ge 1$. Let $\lambda=\frac1{|r|}\in(0,1]$. Then $1=\lambda Or$, so
$$
f(y,1)=\lambda^2 f(y,r)\quad\Rightarrow\quad
f(y,r)=\lambda^{-2}f(y,1)=|r|^2 f(y,1)=r^2 f(y,1).
$$

\textbf{Case } $0<|r|\le 1$. Let $\lambda=|r|\in(0,1]$. Then $r=\lambda O$, so
$$
f(y,r)=f(y,\lambda O)=\lambda^2 f(y,1)=|r|^2 f(y,1)=r^2 f(y,1).
$$
Combined with $f(y,0)=0$, this proves the result for all $r\in\R$.

Let us now prove the result for an arbitrary dimension $d>1$. For a given $z \in \R^d$, the case $z=0$ follows from the first line. Assume $z\neq 0$ and set $v := \frac{z}{\lvert z \rvert}$ to be a unit vector. Choose an orthogonal matrix $O$ verifying $Ov=e_1$ (for instance, by a Householder reflection). Define
$$ g(y,r):=f(y,r e_1), \qquad r\in\R.$$
Then, for any $\lambda\in[0,1]$,
$$ g\bigl(y,\lambda \lvert z \rvert\bigr)=f\bigl(y,\lambda \lvert z \rvert e_1\bigr)=f(y,\lambda Oz)=\lambda^2 f(y,z). $$
By the one-dimensional case applied to $g$ (in the variable $r$), we have $g(y,r)=g(y,1)\,r^2$ for all $r\in\R$. Therefore,
$$ f(y,z)=g\bigl(y,\lvert z\rvert\bigr)=g(y,1)\,\lvert z\rvert^2=f(y,e_1)\,\lvert z\rvert^2. $$
    
\end{proof}
\section{Study of the change of variable} \label{app:D}
We present here the results for the change of variable used in the case where $g(t, \omega, y,0)$ is deterministic.

\begin{Lemma}\label{Lemma:1stPDE}
Assume that $g$ satisfies \textbf{(A1)}–\textbf{(A4)} and \emph{Assumption} \textbf{(H)}. 
Then, the following first-order linear PDE, which holds for all $(t,y) \in [0,T] \times \R$,
\begin{equation} 
\partial_t v(t,y) - \partial_y v(t,y)\,h(t,y) = 0,\qquad v(0,y)=y,
\end{equation}
admits a unique solution $v$ in $C^{1,2}([0,T] \times \R)$ which satisfies for all $(t,y) \in [0,T]\times \R$ $$ \partial_{ty} v(t,y) = \partial_{yt} v(t,y) ~~ \text{ and } ~~ \partial_{tyy} v(t,y) = \partial_{yty} v(t,y).$$
Moreover, for every $t\in[0,T]$, the map $y\mapsto v(t,y)$ is a $C^1$-diffeomorphism of $\R$. In addition, there exist deterministic constants $m_1,M_1,M_2,M_3>0$ such that, for all $(t,u) \in [0,T] \times \R$, 
$$
0<m_1\ \le\ \partial_y v\bigl(t,v^{-1}(t,u)\bigr)\ \le\ M_1,\qquad
\bigl|\partial_{yy}v\bigl(t,v^{-1}(t,u)\bigr)\bigr|\ \le\ M_2,
$$
and
$$
\bigl|\partial_{yyy}v\bigl(t,v^{-1}(t,u)\bigr)\bigr|\ \le\ M_3.
$$
\end{Lemma}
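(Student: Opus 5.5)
The plan is to solve the PDE by the method of characteristics, run backward in time. For fixed $(t,y)$ consider the ODE
\[
\dot X_s = h(s,X_s),\qquad X_t = y,\qquad s\in[0,t].
\]
By \textbf{(H0)}--\textbf{(H1)}, $h$ is continuous and globally Lipschitz in $y$ with integrable constant $H(s)$. Carath\'eodory/Cauchy--Lipschitz therefore gives a unique global solution $X^{t,y}_s$, and the flow $y\mapsto X^{t,y}_0$ is a bijection of $\R$: its inverse is the forward flow from time $0$ to time $t$.

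\emph{Construction.} I would set $v(t,y):=X^{t,y}_0$. Along a characteristic, $v(s,X_s)$ is constant. Differentiating $s\mapsto v(s,X^{t,y}_s)$ formally gives $\partial_t v+h\,\partial_y v=0$, so with the sign convention of \eqref{eq:1stPDE} I would actually run the characteristic as $\dot X_s=-h(s,X_s)$. I would fix the sign once so that $v(t,\cdot)$ is transported correctly and $v(0,y)=y$. Uniqueness among $C^{1}$ solutions follows because any such solution is constant along characteristics, which cover $[0,T]\times\R$.

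\emph{Regularity in $y$.} Differentiating the ODE in the initial condition gives the first variational equation $\dot\xi=\partial_y h\,\xi$, with $\xi_t=1$. Hence
\[
\partial_y v(t,y)=\exp\Big(\pm\int_0^t\partial_y h(s,X_s)\,ds\Big),
\]
so $e^{-\|H\|_{L^1}}\le\partial_y v\le e^{\|H\|_{L^1}}$. These are the bounds $m_1,M_1$, and they are uniform in the point, so in particular they hold at $y=v^{-1}(t,u)$. The second and third variational equations are linear ODEs with source terms $\partial_{yy}h\,\xi^2$ and $\partial_{yyy}h\,\xi^3+3\partial_{yy}h\,\xi\,\xi'$. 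Using \textbf{(H2)} and Gr\"onwall with the integrable coefficient $H$, I get $|\partial_{yy}v|\le M_2$ and $|\partial_{yyy}v|\le M_3$. The differentiability of the flow up to order three follows from the standard theorem on differentiable dependence on initial data. Since the coefficient $h$ is only continuous in $t$, I would state it in its Carath\'eodory form: there is continuity in $y$ up to the relevant order and measurable dependence in $t$ dominated by $H$ and $B$. Since $\partial_y v>0$ and $v(t,\cdot)$ is the time-$t$ flow map, $v(t,\cdot)$ is a $C^1$ (indeed $C^3$) diffeomorphism.

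\emph{Regularity in $t$ and mixed derivatives; expected main obstacle.} The identity $\partial_t v=\pm h\,\partial_y v$ shows that $\partial_t v$ exists and is continuous, since $h$ and $\partial_y v$ are continuous. Thus $v\in C^{1,2}$. The delicate point is the commutation $\partial_{ty}v=\partial_{yt}v$ and $\partial_{tyy}v=\partial_{yty}v$ when $h$ is merely continuous in $t$. I would avoid Schwarz's theorem and compute directly. Define $\partial_{yt}v:=\partial_y(h\,\partial_y v)=\partial_y h\,\partial_y v+h\,\partial_{yy}v$, which is continuous. Next I would show that $\partial_t(\partial_y v)$ exists and equals this expression, by differentiating the explicit exponential formula for $\partial_y v$ in $t$ (or the integral form of the variational equation in $t$). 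Repeating the same computation one order higher, with the formula for $\partial_{yy}v$ from the second variational equation and \textbf{(H2)}, gives $\partial_{tyy}v=\partial_{yty}v$. This bookkeeping of $t$-derivatives of the variational flows, with only $L^1$-in-time control, is the step I expect to require the most care.
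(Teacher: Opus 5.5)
Your proposal is correct and follows essentially the same route as the paper. It uses characteristics for $\dot y=-h(t,y)$, Gr\"onwall on the first, second and third variational equations under \textbf{(H1)}--\textbf{(H2)} to get $m_1,M_1,M_2,M_3$, bijectivity of the flow for the diffeomorphism property, and the transport identity $\partial_t v=h\,\partial_y v$ for the time regularity and mixed derivatives. Your $v(t,y)=X^{t,y}_0$ is exactly the paper's $\Phi_t^{-1}(y)$; writing it as a backward flow lets you read off $\partial_{yy}v,\partial_{yyy}v$ directly instead of through the inverse-map identities, and your constancy-along-characteristics argument supplies the uniqueness claim, which the paper's proof does not address explicitly.
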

\begin{proof}
\emph{\underline{Step 1:} Characteristics method.}
Consider the ODE
$$
\dot{y}(t)=-h\bigl(t,y(t)\bigr),\qquad y(0)=y_0\in\R.
$$
By \textbf{(H0)} and \textbf{(H1)}, $h$ is a continuous function and is locally Lipschitz in $y$.
Hence there exists a unique local solution. Moreover, using the linear growth bound
$\lvert h(t,y)\rvert\le \kappa(1+\lvert y\rvert)$ (from \textbf{(A2)} with $z=0$), this solution is global and $C^1$ on $[0,T]$,
see \cite[Theorem~2.5, Theorem~2.17]{Teschl2012}.
Denote it by $\phi(t;y_0)$ and set $\Phi_t(y_0):=\phi(t;y_0)$.

\smallskip
\noindent \emph{\underline{Step 2:} Monotonicity and bounds for $\partial_y v$.}
 Set $\xi(t;y_0):=\partial_{y_0}\Phi_t(y_0)$, which is well-defined and continuous in $(t,y_0)$
under \textbf{(H0)} (see \cite[Theorem~7.2]{coddington1956theory}). Then, by the differentiability of $h$ w.r.t. $y$,
$$
\dot\xi(t;y_0)=-\partial_y h\bigl(t,\Phi_t(y_0)\bigr)\,\xi(t;y_0),\qquad \xi(0;y_0)=1.
$$
By \textbf{(H1)} and Gr\"onwall,
$$
e^{-\int_0^t H(r)\,dr}\ \le\ \xi(t;y_0)\ \le\ e^{\int_0^t H(r)\,dr},\qquad t\in[0,T],\ y_0\in\R.
$$
In particular, for every $t\in[0,T]$, the map $\Phi_t$ is strictly increasing and bi-Lipschitz:
$$
e^{-\int_0^t H(r)\,dr}\,|y_0-y_1|\ \le\ |\Phi_t(y_0)-\Phi_t(y_1)|\ \le\ e^{\int_0^t H(r)\,dr}\,|y_0-y_1|.
$$
Hence $|\Phi_t(y_0)|\to\infty$ as $|y_0|\to\infty$, so $\Phi_t:\R\to\R$ is onto. Together with $\xi(t;y_0)>0$,
the inverse function theorem yields that $\Phi_t$ is a $C^1$-diffeomorphism. Define 
\begin{equation}\label{eq:vinv-strong}
v(t,y)=\Phi_t^{-1}(y),\qquad (t,y)\in[0,T]\times\R.
\end{equation}
Then, by construction,
\begin{equation}\label{eq:prePDE}
\Phi_t \left ( v(t,y) \right)= y,\qquad (t,y)\in[0,T]\times\R.
\end{equation}
Therefore, $v$ is the solution of the PDE \eqref{eq:1stPDE} since by differentiating $t \in [0,T]$ equation \eqref{eq:prePDE}, we have
\begin{equation*}
    0 = \partial_t \Phi_t(v(t,y)) + \partial_y \Phi_t(v(t,y)) \partial_t v(t,y) = -h(t,y) + \partial_y \Phi_t(v(t,y)) \partial_t v(t,y).
\end{equation*}
Rearranging the terms yields
\begin{equation*}
    \partial_t v(t,y)  = \Frac{1}{\partial_y \Phi_t(v(t,y))} h(t,y)  = \partial_y v(t,y) h(t,y).
\end{equation*}
Moreover, from \eqref{eq:vinv-strong} and the inverse function theorem,
$$
\partial_y v\bigl(t,y\bigr)=\frac{1}{\xi\bigl(t,v(t,y)\bigr)}.
$$
Therefore, for all $(t,u) \in [0,T] \times \R$,
$$
m_1:=e^{-\int_0^T H(r) dr}\ \le\ \partial_y v\bigl(t,v^{-1}(t,u)\bigr)\ \le\ e^{\int_0^T H(r) dr}=:M_1.
$$

\smallskip
\emph{\underline{Step 3:} Bounds for $\partial_{yy}v$ and $\partial_{yyy}v$.}
Let $\eta(t;y_0):=\partial_{y_0y_0}^2\Phi_t(y_0)$ and $\theta(t;y_0):=\partial_{y_0y_0y_0}^3\Phi_t(y_0)$. Differentiating the variational equation yields
$$
\dot\eta=-\partial_{yy}^2 h\bigl(t,\Phi_t(y_0)\bigr)\,\xi^2-\partial_y h\bigl(t,\Phi_t(y_0)\bigr)\,\eta,\qquad \eta(0;y_0)=0,
$$
and an analogous linear ODE for $\theta$ with coefficients $\partial_y h,\partial_{yy}^2 h,\partial_{yyy}^3 h$ and $(\xi,\eta)$. Using \textbf{(H1)} and \textbf{(H2)} together with the bounds for $\xi$, variation of constants and Gr\"onwall give
$$
|\eta(t;y_0)|\ \le\ C_2,\qquad |\theta(t;y_0)|\ \le\ C_3,\qquad t\in[0,T],\ y_0\in\R,
$$
for deterministic constants $C_2,C_3>0$ depending only on $T,H,B$.

By the inverse-map identities,
$$
\partial_{yy}v\bigl(t,y\bigr)=-\frac{\eta\bigl(t,v(t,y)\bigr)}{\xi\bigl(t,v(t,y)\bigr)^3},\qquad
\partial_{yyy}v\bigl(t,y\bigr)=\frac{3\,\eta(t,v)^2-\xi(t,v)\,\theta(t,v)}{\xi(t,v)^5},
$$
and the bounds above on $(\xi,\eta,\theta)$, we obtain global constants $M_2,M_3>0$ such that, for all $(t,u) \in [0,T] \times \R$,
$$
\bigl|\partial_{yy}v\bigl(t,v^{-1}(t,u)\bigr)\bigr|\ \le\ M_2,\qquad
\bigl|\partial_{yyy}v\bigl(t,v^{-1}(t,u)\bigr)\bigr|\ \le\ M_3.
$$   

\smallskip
\emph{\underline{Step 4:} Regularity of $v$.} The continuity of $(t,y_0)\mapsto \Phi_t(y_0)$ and of $(t,y_0)\mapsto \xi(t;y_0)$ (hence of $v$ and $\partial_y v$)
under \textbf{(H0)}--\textbf{(H1)} follows from the continuous dependence theory for ODEs,
see again \cite{coddington1956theory, Teschl2012}.
Then \eqref{eq:1stPDE} gives $\partial_t v=\partial_y v\,h$, and \textbf{(H0)} yields continuity of $\partial_t v$.
The bounds and formulas in Step~3 provide existence and continuity of $\partial_{yy}v$ and $\partial_{yyy}v$.
Finally, differentiating \eqref{eq:1stPDE} in $y$ shows that $u_1:=\partial_y v$ solves
$\partial_t u_1-\partial_y u_1\,h-\partial_y v\,\partial_y h=0$, and similarly $u_2:=\partial_{yy}v$
solves the corresponding linear first-order equation, which yields
$\partial_{ty}v=\partial_{yt}v$ and $\partial_{tyy}v=\partial_{yty}v$.
\end{proof}
\medskip
\noindent In the following proposition, we study the properties of the new generator $\Tilde g$.
\begin{Proposition}\label{prop:Koby-tildeg}
Assume that $g$ satisfies \textbf{(A1)}–\textbf{(A4)} and \textbf{(H)}, then the generator $\Tilde g$ satisfies \textbf{(A1)}–\textbf{(A3)}. Moreover, the associated quadratic $g$-expectation $\Ec^{\Tilde g}$ is well-defined and verifies the  properties in Proposition \ref{prop:Eg}. Finally, for bounded $X$ and $0\le \sigma \le \tau \le T$, the associated $g$-expectation is linked by
$$
\Ec^{\Tilde g}_{\sigma,\tau}[X]\;=\; v\Big(\sigma,\ \Ec^{g}_{\sigma,\tau}\big[v^{-1}(\tau,X)\big]\Big), ~~ \P\text{-a.s.}
$$

\end{Proposition}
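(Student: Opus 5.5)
The plan is to obtain everything from the explicit form of $\Tilde g$ and an It\^o change of variables in both directions, using the bounds of Lemma~\ref{Lemma:1stPDE}. First I simplify the generator. For $y=v^{-1}(t,u)$ and $z=\Tilde z/\partial_y v(t,y)$, the PDE \eqref{eq:1stPDE} gives $\partial_t v=\partial_y v\,h$. Hence
$$
\Tilde g(t,u,\Tilde z)=\partial_y v(t,y)\big(g(t,y,z)-h(t,y)\big)-\tfrac12\,\frac{\partial_{yy}v(t,y)}{\partial_y v(t,y)^2}\,|\Tilde z|^2 .
$$
In particular $\Tilde g(t,u,0)=0$, and $\Tilde g$ is deterministic in its transformation part. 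By Lemma~\ref{Lemma:1stPDE}, $v(t,\cdot)$ is bi-Lipschitz with constants $m_1,M_1$, so $|v^{-1}(t,u)|\le C(1+|u|)$. Also $u\mapsto v^{-1}(t,u)$ is continuous uniformly in $t$, and $\partial_y v$ and $\partial_{yy}v$ evaluated along $v^{-1}$ are bounded and continuous.

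\textbf{(A1)} follows because $\Tilde g$ is a composition of $g$ (continuous uniformly in $(t,\omega)$) with continuous deterministic maps. For \textbf{(A2)} I use the decomposition of $g$:
$$
g(t,y,z)-h(t,y)=\big(g_1(t,y,z)-g_1(t,y,0)\big)y+g_2(t,y,z)-g_2(t,y,0).
$$
I set $\Tilde g_1:=\partial_y v\,(g_1(t,y,z)-g_1(t,y,0))\,y/u$ on $\{|u|\ge1\}$ and $\Tilde g_1:=0$ otherwise, and put the remainder into $\Tilde g_2$. Since $|y|/|u|\le 2C$ on $\{|u|\ge 1\}$, we get $|\Tilde g_1|\le 4\kappa M_1C$. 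On $\{|u|<1\}$ the term $|y|$ is bounded by $2C$, so the remainder satisfies $|\Tilde g_2|\le\tilde\kappa+\tilde\ell(|u|)|\Tilde z|^2$ with $\tilde\ell(r):=M_1m_1^{-2}\,2\ell(C(1+r))+M_2/(2m_1^2)$. The only care needed is that $\Tilde g_1,\Tilde g_2$ remain progressively measurable and continuous in $(u,\Tilde z)$. If the cut-off breaks continuity, I will replace it by a smooth cut-off $\chi(u)$. For \textbf{(A3)},
$$
\partial_{\Tilde z}\Tilde g=\partial_z g\big(t,y,\Tilde z/\partial_y v\big)-\frac{\partial_{yy}v}{\partial_y v^{2}}\,\Tilde z ,
$$
which is bounded by $\ell(C(1+|u|))(1+|\Tilde z|/m_1)+M_2m_1^{-2}|\Tilde z|$.

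I do not try to verify \textbf{(A4)} for $\Tilde g$. Instead, well-posedness and the link formula come from transferring Kobylanski's theory for $g$. Fix $\sigma\le\tau$ and $X\in L^\infty(\Fc_\tau)$. Then $v^{-1}(\tau,X)$ is bounded, so BSDE$(\tau,v^{-1}(\tau,X),g)$ has a unique solution $(Y,Z)\in\Sc^\infty\times\Hc^2$ with $\int Z\,dW$ BMO. It\^o's formula \eqref{Itoformula} on $[\sigma,\tau]$ (valid at stopping times since $v\in C^{1,2}$) shows that $(v(t,Y_t),\partial_y v(t,Y_t)Z_t)$ solves the $\Tilde g$-BSDE with terminal value $X$. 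This gives existence in $\Sc^\infty\times\Hc^2$, using the bounds on $\partial_y v$. Conversely, the inverse of a $C^{1,2}$ map with $\partial_y v\ge m_1$ is again $C^{1,2}$, with derivatives expressible through $\partial_y v,\partial_{yy}v$ and $\partial_t v^{-1}=-h\circ v^{-1}$ (up to sign conventions). So any bounded solution $(U,\Tilde Z)$ of the $\Tilde g$-BSDE is mapped by $v^{-1}$ to a bounded solution of the $g$-BSDE. Uniqueness for $g$ then gives uniqueness for $\Tilde g$ and the identity $\Ec^{\Tilde g}_{\sigma,\tau}[X]=v(\sigma,\Ec^g_{\sigma,\tau}[v^{-1}(\tau,X)])$.

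The properties of Proposition~\ref{prop:Eg} follow from the link formula and the corresponding properties of $\Ec^g$. Time-consistency follows by composing the flows, since the $v^{-1}(\sigma,\cdot)\circ v(\sigma,\cdot)$ factors cancel. Strict monotonicity follows because $v(t,\cdot)$ is strictly increasing. The zero-one law holds because $v(\sigma,\cdot)$ acts pointwise in $\omega$. Constant preservation holds because $\Tilde g(t,u,0)=0$, or directly via the characteristic flow.

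I expect the main obstacle to be the reverse It\^o step. It needs $v^{-1}\in C^{1,2}$ with the right mixed derivatives, which is where the third-order bounds of Lemma~\ref{Lemma:1stPDE} and the identities $\partial_{ty}v=\partial_{yt}v$ enter. I would handle this by differentiating $v(t,v^{-1}(t,u))=u$ and reading off each derivative explicitly. The measurable and continuous splitting into $\Tilde g_1u+\Tilde g_2$ is the secondary nuisance.
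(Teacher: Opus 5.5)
Your proposal is correct and follows essentially the same route as the paper. The shared steps are: the simplified form of $\tilde g$ via $\partial_t v=\partial_y v\,h$, the bounds of Lemma~\ref{Lemma:1stPDE} used to check \textbf{(A1)}--\textbf{(A3)} through a splitting $\tilde g=\tilde g_1u+\tilde g_2$, and the transfer of existence and uniqueness by It\^o's formula in both directions, which bypasses \textbf{(A4)} for $\tilde g$ and yields the link formula and the properties of Proposition~\ref{prop:Eg}. The only real difference is your splitting, a cut-off in $|u|$ instead of the paper's quotient $(v^{-1}(t,u)-v^{-1}(t,0))/u$; your smooth cut-off treats continuity of $\tilde g_1$ more carefully than the paper's convention $\tilde g_1(t,0,\cdot)=0$.
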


\begin{proof} The proof is organized in two steps:

\emph{\underline{Step 1:}}
First, we begin by proving that the generator $\tilde{g}$ verifies the assumptions \textbf{(A1)}–\textbf{(A3)}.\\By Lemma \ref{Lemma:1stPDE} there exist deterministic constants $m_1,M_1,M_2,M_3>0$ such that for all $(t,y)\in[0,T]\times\R$
$$
0<m_1\le \partial_y v(t,y)\le M_1,\qquad
|\partial_{yy}v(t,y)|\le M_2,\qquad
|\partial_{yyy}v(t,y)|\le M_3.
$$
Hence $v(t,\cdot)$ and $v^{-1}(t,\cdot)$ are globally Lipschitz with
$$
|v(t,y)-v(t,y')|\le M_1|y-y'|,\qquad
|v^{-1}(t,u)-v^{-1}(t,u')|\le \frac{1}{m_1}\,|u-u'|.
$$
Let $\kappa>0$ be the linear-growth constant from \textbf{(A2)} at $z=0$, so that $|h(t,y)|\le \kappa(1+|y|)$.
Solving the characteristic ODE backward from time $t$ yields the uniform bound
$$
\sup_{t\in[0,T]}\big|v^{-1}(t,0)\big|\ \le\ e^{\kappa T}-1=:K_0.
$$
Therefore, for all $(t,u) \in [0,T] \times \R$,
\begin{equation}\label{eq:y-vs-u}
\big|v^{-1}(t,u)\big|
\le \big|v^{-1}(t,0)\big|+\frac{1}{m_1}|u|
\le K_0+\frac{1}{m_1}|u|.
\end{equation}

 Now, we prove here that assumption \textbf{(A1)} is verified.
Measurability follows from the measurability of $g$ and the continuity with respect to time of the deterministic function $v$; continuity in $(u,\Tilde z)$ is inherited from the continuity of $(y,z)\mapsto g(t,y,z)$ and the smooth change of variables $(u,\Tilde z)\mapsto (y,z)$.

Recall the specific form of the generator $g$ in the standing assumption :
$$
 g(t,\omega,y,z)=g_1(t,\omega,y,z)y+g_2(t,\omega,y,z), \quad ~~ \forall (t,\omega,y,z) \in
 [0,T] \times \Omega  \times \R \times \R^d,
$$
and we can obtain the following identity by using the fact that $v$ is the solution of PDE \eqref{eq:1stPDE}
\begin{align*}
\Tilde g(t,u,\Tilde z) & =  \partial_y v(t,y)\,[g(t,y,z)-g(t,y,0)]-\frac12\,\partial_{yy}v(t,y)\,|z|^2\quad \\
&= \partial_y v(t,y)\,[g_1(t,y,z)-g_1(t,y,0)]y + \partial_y v(t,y)\,[g_2(t,y,z)-g_2(t,y,0)]\\
&-\frac12\,\partial_{yy}v(t,y)\,|z|^2,
\end{align*}
with $z:=\frac{\Tilde z}{\partial_y v(t,y)}$.

\noindent Set 
\begin{align*}
    \Tilde g_1(t,u, \Tilde z) &:= 
    \begin{cases}
        \partial_y v(t,y)\,[g_1(t,y,z)-g_1(t,y,0)]\tfrac{v^{-1}(t,u) - v^{-1}(t,0)}{u}, \quad \text{ if } u \neq 0,\\
        0, \quad \text{else.}
    \end{cases} \\
    \Tilde g_2(t,u, \Tilde z) &:= \partial_y v(t,y)\,[g_1(t,y,z)-g_1(t,y,0)]v^{-1}(t,0) +\\ &\partial_y v(t,y)\,[g_2(t,y,z)-g_2(t,y,0)] -\frac12\,\partial_{yy}v(t,y)\,|z|^2.
\end{align*}
Then, $$\Tilde g(t,u,\Tilde z) =  \Tilde g_1(t,u, \Tilde z) u + \Tilde g_2(t,u, \Tilde z),$$
and using the estimates from \textbf{(A2)} and the global bounds on $v$, we have
\begin{align*}
    |\Tilde g_1(t,u,\Tilde z)|
&\le 2\frac{M_1}{m_1} \kappa ,\\
|\Tilde g_2(t,u,\Tilde z)| &\le 2 M_1 \kappa  \left(K_0 + 1\right) + \left ( \frac{ M_1 \ell(\lvert y \lvert )}{m_1^2}+\frac{M_2}{2m_1^2}\right) \lvert \Tilde z|^2.
\end{align*}
Define the increasing continuous function
$$
\tilde\ell(r):=  \left( \frac{M_1}{m_1^2} \ell\Big(K_0+\frac{1}{m_1}\,r\Big) + \tfrac{M_2}{2 m_1^2} \right),\qquad r\ge0,
$$
and observe that
$$\ell(\lvert y \lvert) = \ell(\lvert v^{-1}(t,u) \lvert) \leq \ell\Big(K_0+\frac{1}{m_1}\,\lvert u\lvert \Big). $$
Then 
$$
|\Tilde g_2(t,u,\Tilde z)|
\le 2 M_1 \kappa  \left(K_0 + 1\right) + \Tilde \ell \left( \vert u \lvert \right)\,|\Tilde z|^2.
$$
For the assumption \textbf{(A3)}, we take $z=\Tilde z/\partial_y v(t,y)$ and differentiate $\Tilde g$ with respect to $\Tilde z$ 
$$
\partial_{\Tilde z} \Tilde g(t,u,\Tilde z)
=\partial_z g(t,y,z) - \frac{\partial_{yy}v(t,y)}{\big(\partial_y v(t,y)\big)^2} \Tilde z.
$$
Therefore,
\begin{align*}
   \big|\partial_{\Tilde z} \Tilde g(t,u,\Tilde z)\big|
&\le \ell(|y|)\big(1+|z|\big)+\frac{M_2}{m_1^2}\,|\Tilde z| \\
&\le \tfrac{1}{M_1}\tilde\ell(|u|)\Big(1+\frac{1}{m_1}|\Tilde z|\Big)+\frac{M_2}{m_1^2}|\Tilde z| \\
&\le \Tilde \ell^{\star}(|u|)\,\big(1+|\Tilde z|\big), 
\end{align*}
for an increasing continuous function $\Tilde \ell^{\star}$ related to $\Tilde \ell$ such that $\Tilde \ell \le \Tilde \ell^{\star}$.

\emph{\underline{Step 2:} Transfer of well-posedness (no need for \textbf{(A4)} on $\Tilde g$).}
Let $(Y,Z)$ be the unique solution of the original BSDE with data $(g,\xi)$. Define
$$
\Tilde Y_t:=v(t,Y_t),\qquad \Tilde Z_t:=\partial_y v(t,Y_t)\,Z_t.
$$
By It\^o’s formula and $\partial_t v - \partial_y v\,h = 0$, the pair $(\Tilde Y,\Tilde Z)$ solves the BSDE with data $(\Tilde g,\Tilde\xi:=v(T,\xi))$. Conversely, any solution $(\widehat Y,\widehat Z)$ of the transformed BSDE yields a solution $$(v^{-1}(t,\widehat Y_t),\,\widehat Z_t/\partial_y v(t,v^{-1}(t,\widehat Y_t)))$$ of the original equation; uniqueness for the original problem then implies $(\widehat Y,\widehat Z)=(\Tilde Y,\Tilde Z)$. Hence uniqueness is inherited by transfer, and checking \textbf{(A4)} (and \textbf{(A3)}) for $\Tilde g$ is not required. 
Consequently, for bounded $X$ and $0\le \sigma\le \tau \le T$ $\P$-a.s., the associated $g$--expectations are linked by
$$
\Ec^{\Tilde g}_{\sigma,\tau}[X]\;=\; v\Big(\sigma,\ \Ec^{g}_{\sigma,\tau}\big[v^{-1}(\tau,X)\big]\Big), ~~ \P\text{-a.s.}
$$
It follows immediately that the properties of the quadratic $g$--expectation are preserved under the transform. In particular, the representation limit in Theorem~\ref{thm:representation} and the properties in Proposition~\ref{prop:Eg} remain valid for $\Tilde g$.
\end{proof}

\end{appendix}
\section*{Funding}

This work was supported by the Agence Nationale de la Recherche through the project
DREAMeS (ANR-21-CE46-0002) and by the France 2030 programme through the MIRTE
project (ANR-23-EXMA-0011).

 The research of the first, third, and fourth authors was also supported by the Chair “Risques Émergents en Assurance” (RE2A), the Chair “Impact de la Transition Climatique en Assurance” (ITCA), and GRESC “Gestion des risques ESG pour le cr\'edit” , under the aegis of the Fondation du Risque, in partnership with the Risk and Insurance Institute of Le Mans, MMA-Covéa, Groupama, and Natixis, respectively.

The funding bodies had no role in the design of the study, the analysis, the writing
of the manuscript, or the decision to submit the article.
\bibliographystyle{abbrv}
\bibliography{ref}

\end{document}